\let\mathscr\mathcal
\newtheorem{thm}[subsubsection]{Theorem}
\newtheorem{thmX}{Theorem}
\newtheorem{cor}[subsubsection]{Corollary}
\newtheorem{lem}[subsubsection]{Lemma}
\newtheorem{prop}[subsubsection]{Proposition}
\theoremstyle{definition}
\newtheorem{constr}[subsubsection]{Construction}
\newtheorem{rem}[subsubsection]{Remark}
\newtheorem{defn}[subsubsection]{Definition}
\newtheorem{warn}[subsubsection]{Warning}
\newtheorem{exam}[subsubsection]{Example}
\newtheorem{notat}[subsubsection]{Notation}
\newcommand{\constrref}[1]{Construction~\ref{#1}}
\newcommand{\thmref}[1]{Theorem~\ref{#1}}
\newcommand{\secref}[1]{Sect.~\ref{#1}}
\newcommand{\ssecref}[1]{Subsect.~\ref{#1}}
\newcommand{\lemref}[1]{Lemma~\ref{#1}}
\newcommand{\propref}[1]{Proposition~\ref{#1}}
\newcommand{\corref}[1]{Corollary~\ref{#1}}
\newcommand{\remref}[1]{Remark~\ref{#1}}
\newcommand{\defref}[1]{Definition~\ref{#1}}
\newcommand{\examref}[1]{Example~\ref{#1}}
\newcommand{\warnref}[1]{Warning~\ref{#1}}
\renewcommand{\eqref}[1]{(\ref{#1})}
\numberwithin{equation}{section}
\newcommand{\nc}{\newcommand}
\nc{\renc}{\renewcommand}
\nc{\ssec}{\subsection}
\nc{\sssec}{\subsubsection}
\nc{\on}{\operatorname}
\nc{\term}[1]{#1\xspace}
\nc{\sA}{\ensuremath{\mathscr{A}}\xspace}
\nc{\sB}{\ensuremath{\mathscr{B}}\xspace}
\nc{\sC}{\ensuremath{\mathscr{C}}\xspace}
\nc{\sD}{\ensuremath{\mathscr{D}}\xspace}
\nc{\sE}{\ensuremath{\mathscr{E}}\xspace}
\nc{\sF}{\ensuremath{\mathscr{F}}\xspace}
\nc{\sG}{\ensuremath{\mathscr{G}}\xspace}
\nc{\sH}{\ensuremath{\mathscr{H}}\xspace}
\nc{\sI}{\ensuremath{\mathscr{I}}\xspace}
\nc{\sJ}{\ensuremath{\mathscr{J}}\xspace}
\nc{\sK}{\ensuremath{\mathscr{K}}\xspace}
\nc{\sL}{\ensuremath{\mathscr{L}}\xspace}
\nc{\sM}{\ensuremath{\mathscr{M}}\xspace}
\nc{\sN}{\ensuremath{\mathscr{N}}\xspace}
\nc{\sO}{\ensuremath{\mathscr{O}}\xspace}
\nc{\sP}{\ensuremath{\mathscr{P}}\xspace}
\nc{\sQ}{\ensuremath{\mathscr{Q}}\xspace}
\nc{\sR}{\ensuremath{\mathscr{R}}\xspace}
\nc{\sS}{\ensuremath{\mathscr{S}}\xspace}
\nc{\sT}{\ensuremath{\mathscr{T}}\xspace}
\nc{\sU}{\ensuremath{\mathscr{U}}\xspace}
\nc{\sV}{\ensuremath{\mathscr{V}}\xspace}
\nc{\sW}{\ensuremath{\mathscr{W}}\xspace}
\nc{\sX}{\ensuremath{\mathscr{X}}\xspace}
\nc{\sY}{\ensuremath{\mathscr{Y}}\xspace}
\nc{\sZ}{\ensuremath{\mathscr{Z}}\xspace}
\nc{\bA}{\ensuremath{\mathbf{A}}\xspace}
\nc{\bB}{\ensuremath{\mathbf{B}}\xspace}
\nc{\bC}{\ensuremath{\mathbf{C}}\xspace}
\nc{\bD}{\ensuremath{\mathbf{D}}\xspace}
\nc{\bE}{\ensuremath{\mathbf{E}}\xspace}
\nc{\bF}{\ensuremath{\mathbf{F}}\xspace}
\nc{\bG}{\ensuremath{\mathbf{G}}\xspace}
\nc{\bH}{\ensuremath{\mathbf{H}}\xspace}
\nc{\bI}{\ensuremath{\mathbf{I}}\xspace}
\nc{\bJ}{\ensuremath{\mathbf{J}}\xspace}
\nc{\bK}{\ensuremath{\mathbf{K}}\xspace}
\nc{\bL}{\ensuremath{\mathbf{L}}\xspace}
\nc{\bM}{\ensuremath{\mathbf{M}}\xspace}
\nc{\bN}{\ensuremath{\mathbf{N}}\xspace}
\nc{\bO}{\ensuremath{\mathbf{O}}\xspace}
\nc{\bP}{\ensuremath{\mathbf{P}}\xspace}
\nc{\bQ}{\ensuremath{\mathbf{Q}}\xspace}
\nc{\bR}{\ensuremath{\mathbf{R}}\xspace}
\nc{\bS}{\ensuremath{\mathbf{S}}\xspace}
\nc{\bT}{\ensuremath{\mathbf{T}}\xspace}
\nc{\bU}{\ensuremath{\mathbf{U}}\xspace}
\nc{\bV}{\ensuremath{\mathbf{V}}\xspace}
\nc{\bW}{\ensuremath{\mathbf{W}}\xspace}
\nc{\bX}{\ensuremath{\mathbf{X}}\xspace}
\nc{\bY}{\ensuremath{\mathbf{Y}}\xspace}
\nc{\bZ}{\ensuremath{\mathbf{Z}}\xspace}
\nc{\dA}{\ensuremath{\mathds{A}}\xspace}
\nc{\dB}{\ensuremath{\mathds{B}}\xspace}
\nc{\dC}{\ensuremath{\mathds{C}}\xspace}
\nc{\dD}{\ensuremath{\mathds{D}}\xspace}
\nc{\dE}{\ensuremath{\mathds{E}}\xspace}
\nc{\dF}{\ensuremath{\mathds{F}}\xspace}
\nc{\dG}{\ensuremath{\mathds{G}}\xspace}
\nc{\dH}{\ensuremath{\mathds{H}}\xspace}
\nc{\dI}{\ensuremath{\mathds{I}}\xspace}
\nc{\dJ}{\ensuremath{\mathds{J}}\xspace}
\nc{\dK}{\ensuremath{\mathds{K}}\xspace}
\nc{\dL}{\ensuremath{\mathds{L}}\xspace}
\nc{\dM}{\ensuremath{\mathds{M}}\xspace}
\nc{\dN}{\ensuremath{\mathds{N}}\xspace}
\nc{\dO}{\ensuremath{\mathds{O}}\xspace}
\nc{\dP}{\ensuremath{\mathds{P}}\xspace}
\nc{\dQ}{\ensuremath{\mathds{Q}}\xspace}
\nc{\dR}{\ensuremath{\mathds{R}}\xspace}
\nc{\dS}{\ensuremath{\mathds{S}}\xspace}
\nc{\dT}{\ensuremath{\mathds{T}}\xspace}
\nc{\dU}{\ensuremath{\mathds{U}}\xspace}
\nc{\dV}{\ensuremath{\mathds{V}}\xspace}
\nc{\dW}{\ensuremath{\mathds{W}}\xspace}
\nc{\dX}{\ensuremath{\mathds{X}}\xspace}
\nc{\dY}{\ensuremath{\mathds{Y}}\xspace}
\nc{\dZ}{\ensuremath{\mathds{Z}}\xspace}
\nc{\bbA}{\ensuremath{\mathbb{A}}\xspace}
\nc{\bbB}{\ensuremath{\mathbb{B}}\xspace}
\nc{\bbC}{\ensuremath{\mathbb{C}}\xspace}
\nc{\bbD}{\ensuremath{\mathbb{D}}\xspace}
\nc{\bbE}{\ensuremath{\mathbb{E}}\xspace}
\nc{\bbF}{\ensuremath{\mathbb{F}}\xspace}
\nc{\bbG}{\ensuremath{\mathbb{G}}\xspace}
\nc{\bbH}{\ensuremath{\mathbb{H}}\xspace}
\nc{\bbI}{\ensuremath{\mathbb{I}}\xspace}
\nc{\bbJ}{\ensuremath{\mathbb{J}}\xspace}
\nc{\bbK}{\ensuremath{\mathbb{K}}\xspace}
\nc{\bbL}{\ensuremath{\mathbb{L}}\xspace}
\nc{\bbM}{\ensuremath{\mathbb{M}}\xspace}
\nc{\bbN}{\ensuremath{\mathbb{N}}\xspace}
\nc{\bbO}{\ensuremath{\mathbb{O}}\xspace}
\nc{\bbP}{\ensuremath{\mathbb{P}}\xspace}
\nc{\bbQ}{\ensuremath{\mathbb{Q}}\xspace}
\nc{\bbR}{\ensuremath{\mathbb{R}}\xspace}
\nc{\bbS}{\ensuremath{\mathbb{S}}\xspace}
\nc{\bbT}{\ensuremath{\mathbb{T}}\xspace}
\nc{\bbU}{\ensuremath{\mathbb{U}}\xspace}
\nc{\bbV}{\ensuremath{\mathbb{V}}\xspace}
\nc{\bbW}{\ensuremath{\mathbb{W}}\xspace}
\nc{\bbX}{\ensuremath{\mathbb{X}}\xspace}
\nc{\bbY}{\ensuremath{\mathbb{Y}}\xspace}
\nc{\bbZ}{\ensuremath{\mathbb{Z}}\xspace}
\nc{\cA}{\ensuremath{\mathcal{A}}\xspace}
\nc{\cB}{\ensuremath{\mathcal{B}}\xspace}
\nc{\cC}{\ensuremath{\mathcal{C}}\xspace}
\nc{\cD}{\ensuremath{\mathcal{D}}\xspace}
\nc{\cE}{\ensuremath{\mathcal{E}}\xspace}
\nc{\cF}{\ensuremath{\mathcal{F}}\xspace}
\nc{\cG}{\ensuremath{\mathcal{G}}\xspace}
\nc{\cH}{\ensuremath{\mathcal{H}}\xspace}
\nc{\cI}{\ensuremath{\mathcal{I}}\xspace}
\nc{\cJ}{\ensuremath{\mathcal{J}}\xspace}
\nc{\cK}{\ensuremath{\mathcal{K}}\xspace}
\nc{\cL}{\ensuremath{\mathcal{L}}\xspace}
\nc{\cM}{\ensuremath{\mathcal{M}}\xspace}
\nc{\cN}{\ensuremath{\mathcal{N}}\xspace}
\nc{\cO}{\ensuremath{\mathcal{O}}\xspace}
\nc{\cP}{\ensuremath{\mathcal{P}}\xspace}
\nc{\cQ}{\ensuremath{\mathcal{Q}}\xspace}
\nc{\cR}{\ensuremath{\mathcal{R}}\xspace}
\nc{\cS}{\ensuremath{\mathcal{S}}\xspace}
\nc{\cT}{\ensuremath{\mathcal{T}}\xspace}
\nc{\cU}{\ensuremath{\mathcal{U}}\xspace}
\nc{\cV}{\ensuremath{\mathcal{V}}\xspace}
\nc{\cW}{\ensuremath{\mathcal{W}}\xspace}
\nc{\cX}{\ensuremath{\mathcal{X}}\xspace}
\nc{\cY}{\ensuremath{\mathcal{Y}}\xspace}
\nc{\cZ}{\ensuremath{\mathcal{Z}}\xspace}
\nc{\mrm}[1]{\ensuremath{\mathrm{#1}}\xspace}
\nc{\mbf}[1]{\ensuremath{\mathbf{#1}}\xspace}
\nc{\mcal}[1]{\ensuremath{\mathcal{#1}}\xspace}
\nc{\mit}[1]{\ensuremath{\mathit{#1}}\xspace}
\nc{\msc}[1]{\ensuremath{\mathscr{#1}}\xspace}
\renc{\bar}[1]{\overline{#1}}
\let\sectsign\S
\let\S\relax
\nc{\sub}{\subset}
\nc{\too}{\longrightarrow}
\nc{\hook}{\hookrightarrow}
\nc*{\hooklongrightarrow}{\ensuremath{\lhook\joinrel\relbar\joinrel\rightarrow}}
\nc{\hooklong}{\hooklongrightarrow}
\nc{\twoheadlongrightarrow}{\relbar\joinrel\twoheadrightarrow}
\nc{\shiso}{\approx}
\nc{\isoto}{\xrightarrow{\sim}}
\nc{\isofrom}{\xleftarrow{\sim}}
\renc{\ge}{\geqslant}
\renc{\le}{\leqslant}
\nc{\id}{\mathrm{id}}
\DeclareMathOperator{\Hom}{\mathrm{Hom}}
\nc{\uHom}{\underline{\smash{\Hom}}}
\DeclareMathOperator{\Map}{\mathrm{Maps}}
\DeclareMathOperator{\End}{\mathrm{End}}
\DeclareMathOperator{\Sym}{\mathrm{Sym}}
\nc{\uEnd}{\underline{\smash{\End}}}
\nc{\colim}{\varinjlim}
\renc{\lim}{\varprojlim}
\nc{\Cofib}{\on{Cofib}}
\nc{\Fib}{\on{Fib}}
\nc{\initial}{\varnothing}
\nc{\op}{\mathrm{op}}
\DeclareMathOperator*{\fibprod}{\times}
\DeclareMathOperator*{\fibcoprod}{\operatorname{\sqcup}}
\renc{\coprod}{\sqcup}
\nc{\Spc}{{\mrm{Spc}}}
\nc{\pt}{{\mrm{pt}}}
\nc{\PSh}{\mrm{PSh}}
\nc{\h}{\on{h}}
\nc{\Sh}{\mrm{Sh}}
\renc{\L}{\mrm{L}}
\nc{\CAlg}{\mrm{CAlg}}
\nc{\cn}{{\mrm{cn}}}
\nc{\Spec}{\on{Spec}}
\nc{\A}{\bA}
\nc{\Sm}{\mrm{Sm}}
\nc{\Et}{\mrm{Et}}
\nc{\cl}{{\mrm{cl}}}
\nc{\red}{{\mrm{red}}}
\nc{\aff}{{\mrm{aff}}}
\nc{\Zar}{\mathrm{Zar}}
\nc{\Nis}{\mathrm{Nis}}
\nc{\et}{\mathrm{\acute et}}
\nc{\sm}{\mathrm{sm}}
\nc{\MotSpc}{{\mbf{H}}}
\nc{\LNis}{\on{\L_\Nis}}
\nc{\htp}{{\A^1}}
\nc{\Lhtp}{\on{\L_\htp}}
\nc{\mot}{{\mrm{mot}}}
\nc{\Lmot}{\on{\L_\mot}}
\nc{\hspc}[2][]{\h_{#1}(#2)}
\nc{\hmot}[2][]{\on{M}_{#1}(#2)}
\nc{\Einfty}{{\sE_\infty}}
\nc{\bDelta}{\mbf{\Delta}}
\nc{\Tot}{\on{Tot}}
\nc{\Cech}{\textnormal{\v{C}}}
\nc{\K}{\on{K}}
\nc{\cEring}{\term{connective $\Einfty$-ring}}
\nc{\cErings}{\term{connective $\Einfty$-rings}}
\nc{\inftyCat}{\term{$\infty$-category}}
\nc{\inftyCats}{\term{$\infty$-categories}}
\nc{\inftyGrpd}{\term{$\infty$-groupoid}}
\nc{\inftyGrpds}{\term{$\infty$-groupoids}}
\title{The~Morel--Voevodsky~localization~theorem in~spectral~algebraic~geometry}
\author{Adeel A. Khan}
\date{\today}
\address{Fakultät für Mathematik\\
Universität Regensburg\\
93040 Regensburg\\
Germany}
\email{\href{mailto:adeel.khan@mathematik.uni-regensburg.de}{adeel.khan@mathematik.uni-regensburg.de}}
\urladdr{\url{https://www.preschema.com}}
\begin{document}

\begin{abstract}
We prove an analogue of the Morel--Voevodsky localization theorem over spectral algebraic spaces.
As a corollary we deduce a ``derived nilpotent invariance'' result which, informally speaking, says that $\mathbf{A}^1$-homotopy invariance kills all higher homotopy groups of a connective commutative ring spectrum.
\end{abstract}

\maketitle

\parskip 0.2cm


\section{Introduction}
\label{sec:intro}

Let $R$ be a connective $\Einfty$-ring spectrum, and denote by $\CAlg^\et_R$ the \inftyCat of étale $\Einfty$-algebras over $R$.
The starting point for this paper is the following fundamental result of J. Lurie, which says that the small étale topos of $R$ is equivalent to the small étale topos of $\pi_0(R)$ (see \cite[Thm.~7.5.0.6]{HA-20170918} and \cite[Rem.~B.6.2.7]{SAG-20180204}):

\begin{thm}[Lurie]\label{thm:intro/etale nil}
For any \cEring $R$, let $\pi_0(R)$ denote its $0$-truncation (viewed as a discrete $\Einfty$-ring).
Then restriction along the canonical functor $\CAlg^\et_R \to \CAlg^\et_{\pi_0(R)}$ induces an equivalence from the \inftyCat of étale sheaves of spaces $\CAlg^\et_{\pi_0(R)} \to \Spc$ to the \inftyCat of étale sheaves of spaces $\CAlg^\et_{R} \to \Spc$.
\end{thm}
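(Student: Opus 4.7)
My plan is to interpolate between $R$ and $\pi_0(R)$ along the Postnikov tower of $R$ and reduce to the case of a single square-zero extension.

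\emph{Step 1 (Postnikov tower).} A connective $\Einfty$-ring is nilcomplete, so $R \simeq \lim_n \tau_{\le n}(R)$, with $\tau_{\le 0}(R) = \pi_0(R)$. Each transition map $\tau_{\le n+1}(R) \to \tau_{\le n}(R)$ is a square-zero extension with connective kernel $\pi_{n+1}(R)[n+1]$, obtained as the pullback of the zero derivation against the canonical derivation $\tau_{\le n}(R) \to \tau_{\le n}(R) \oplus \pi_{n+1}(R)[n+2]$.

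\emph{Step 2 (Square-zero invariance).} The key input is that for any square-zero extension $A' \to A$ of \cErings by a connective module, the base change functor
\[
-\otimes_{A'} A : \CAlg^\et_{A'} \to \CAlg^\et_A
\]
is an equivalence of \inftyCats. This is the obstruction-theoretic statement that étale algebras have no non-trivial deformations, obstructions, or automorphisms along nilpotent extensions, because the cotangent complex of an étale morphism vanishes; the inverse functor is the explicit lift of an étale algebra through the nilpotent ideal $\Ker(A' \to A)$. Iterating along the Postnikov tower yields equivalences $\CAlg^\et_{\tau_{\le n}(R)} \simeq \CAlg^\et_{\pi_0(R)}$ for every $n \ge 0$.

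\emph{Step 3 (Passage to the limit).} I would then upgrade these to an equivalence $\CAlg^\et_R \isoto \lim_n \CAlg^\et_{\tau_{\le n}(R)}$. The forward functor is given by base change $S \mapsto (S \otimes_R \tau_{\le n}(R))_n$, which is well defined by flatness of étale morphisms and identifies $S \otimes_R \tau_{\le n}(R)$ with $\tau_{\le n}(S)$. Conversely, given a compatible family $(S_n)$ with $S_n$ étale over $\tau_{\le n}(R)$, the homotopy limit $S := \lim_n S_n$ is a connective $R$-algebra; one verifies that $S$ is flat over $R$ and that $S \otimes_R \pi_0(R) \simeq S_0$, from which étaleness follows. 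Combined with Step~2 this produces an equivalence $\CAlg^\et_R \isoto \CAlg^\et_{\pi_0(R)}$ of \inftyCats which manifestly preserves étale covers in both directions, hence induces an equivalence of the associated small étale topoi, as claimed.

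The principal technical hurdle is Step~3: producing a genuine étale $R$-algebra from a compatible system of étale algebras along the Postnikov tower, and proving that étaleness is preserved under this limit. The point is that while nilcompleteness of $R$ makes the candidate $\lim_n S_n$ into a connective $R$-algebra, one must separately ensure flatness (via a $\mathrm{Tor}$-amplitude or truncation argument) and then transport étaleness through the identification of its $\pi_0$ with $S_0$.
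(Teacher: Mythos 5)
The paper does not prove this statement: it is cited as a result of Lurie (HA Theorem~7.5.0.6, SAG Remark~B.6.2.7), so there is no internal argument to compare your proposal against. What you have written is, in outline, Lurie's own proof, and it is correct: reduce along the Postnikov tower and use that étale algebras are rigid under square-zero extensions, which holds because $\sL_{B/A}=0$ for $B$ étale over $A$ kills all deformation-theoretic obstructions, automorphisms, and non-uniqueness of lifts.

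For the technical hurdle you flag in Step~3, the argument does close up. Flatness plus étaleness of $S_n$ over $\tau_{\le n}R$ gives
\begin{equation*}
\pi_k(S_n)\;\simeq\;\pi_k(\tau_{\le n}R)\otimes_{\pi_0 R}\pi_0(S_0),
\end{equation*}
so the transition maps on each $\pi_k$ are eventually isomorphisms (once $n\ge k$) and zero before that; hence all $\lim^1$-terms vanish and $\pi_k(\lim_n S_n)\simeq\pi_k(R)\otimes_{\pi_0 R}\pi_0(S_0)$, which exhibits $S:=\lim_n S_n$ as a connective, flat $R$-algebra with $\pi_0(S)\simeq\pi_0(S_0)$ étale over $\pi_0(R)$, hence an étale $R$-algebra. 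You should also record fully faithfulness of $\CAlg^\et_R\to\lim_n\CAlg^\et_{\tau_{\le n}R}$, which is immediate from nilcompleteness and adjunction: for $S,T$ étale over $R$ one has $\Hom_R(S,T)\simeq\lim_n\Hom_R(S,\tau_{\le n}T)\simeq\lim_n\Hom_{\tau_{\le n}R}(\tau_{\le n}S,\tau_{\le n}T)$. Finally, the last sentence of your Step~3 (``manifestly preserves étale covers'') deserves a word: it holds because surjectivity of an étale family is a condition on the underlying Zariski topological spaces, which $\pi_0$ does not change, so the equivalence of sites carries covering sieves to covering sieves in both directions, and the equivalence of sheaf $\infty$-topoi follows.
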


\thmref{thm:intro/etale nil} can be viewed as a special case of the following result (see \cite[Prop.~3.1.4.1]{SAG-20180204}):

\begin{thm}[Lurie]\label{thm:intro/etale kashiwara}
Let $R \to R'$ be a homomorphism of \cErings that is surjective on $\pi_0$.
Then restriction along the canonical functor $\CAlg^\et_R \to \CAlg^\et_{R'}$ defines a fully faithful embedding of the \inftyCat of étale sheaves $\CAlg^\et_{R'} \to \Spc$ into the \inftyCat of étale sheaves $\CAlg^\et_{R} \to \Spc$.
Moreover, a sheaf $\sF$ belongs to the essential image if and only if its restriction to the complement of the closed subset $\Spec(R') \subseteq \Spec(R)$ is (weakly) contractible.
\end{thm}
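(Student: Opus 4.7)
The plan is to reduce to the case of a surjection of discrete commutative rings via \thmref{thm:intro/etale nil}, then recognise the restriction functor as the pushforward for a closed immersion of affine schemes and apply the standard recollement for an $\infty$-topos along a closed-open decomposition.

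First I would verify that the base-change functor $\CAlg^\et_R \to \CAlg^\et_{R'}$, $A \mapsto A \otimes_R R'$, is compatible with passage to $0$-truncations: since étale extensions are flat, $\pi_0(A \otimes_R R') \simeq \pi_0(A) \otimes_{\pi_0(R)} \pi_0(R')$. This fits into a commutative square identifying restriction along $\CAlg^\et_R \to \CAlg^\et_{R'}$ with restriction along $\CAlg^\et_{\pi_0(R)} \to \CAlg^\et_{\pi_0(R')}$, so by \thmref{thm:intro/etale nil} applied to both $R$ and $R'$ I may replace both by their $0$-truncations. This reduces the statement to the classical case of a surjection of discrete commutative rings.

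In this setting the surjection $R \twoheadrightarrow R'$ corresponds to a closed immersion $\iota \colon Z := \Spec(R') \hook X := \Spec(R)$ with open complement $j \colon U := X \setminus Z \hook X$. The restriction functor in the statement is precisely the pushforward $\iota_*$ of the geometric morphism of small étale $\infty$-topoi induced by $\iota$. The $\infty$-categorical recollement for a closed-open decomposition then yields both assertions at once: $\iota_*$ is fully faithful — equivalently, the unit $\id \to \iota^{-1}\iota_*$ is an equivalence, which can be checked stalkwise at geometric points of $Z$ where it reflects the agreement of the strict Henselizations of $R$ and $R'$ at a common point — and its essential image consists of those $\sF$ with $j^{-1}\sF$ terminal, as follows from the recollement fiber square $\sF \simeq \iota_*\iota^{-1}\sF \times_{\iota_*\iota^{-1}j_*j^{-1}\sF} j_*j^{-1}\sF$ combined with the fact that $j_*$ preserves the terminal object.

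The hard part will be confirming that the classical recollement — originally formulated for $1$-topoi and sheaves of sets — continues to hold for sheaves of spaces with all higher homotopy intact. This is ensured by the $\infty$-topos-theoretic reformulation of Lurie, but the care lies in passing from the stalk-level check of fully-faithfulness (which reduces to the Hensel-local étale rigidity already embedded in \thmref{thm:intro/etale nil}) to the \emph{global} pullback description of the essential image; once the recollement apparatus is in place, both statements of the theorem drop out formally.
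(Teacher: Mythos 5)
The paper does not prove this statement; it is attributed to Lurie and cited from \cite[Prop.~3.1.4.1]{SAG-20180204} as background motivation, so there is no in-text proof to compare against. Evaluating your sketch on its own terms: the overall strategy of reducing to the discrete case via \thmref{thm:intro/etale nil}, recognizing the restriction functor as $\iota_*$ for the closed immersion $\Spec(R') \hook \Spec(R)$, and invoking the closed-open recollement for the small \'etale $\infty$-topos is the right one, and the reduction step itself is clean (flatness of \'etale algebras gives $\pi_0(A \otimes_R R') \simeq \pi_0(A) \otimes_{\pi_0(R)} \pi_0(R')$, so the two restriction functors match up under \thmref{thm:intro/etale nil}). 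But the recollement step as written has two real gaps.

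First, the stalkwise check of fully faithfulness is not sufficient in this setting. Lurie's small \'etale $\infty$-topos is defined by \v{C}ech descent rather than hyperdescent (cf.\ \remref{rem:hyper}), and in a non-hypercomplete $\infty$-topos a map of sheaves that is an equivalence on all stalks is only $\infty$-connective, not an equivalence. (Also, the phrase ``agreement of the strict Henselizations'' is imprecise: $R^{\mrm{sh}}_{\bar z}$ surjects onto $R'^{\mrm{sh}}_{\bar z}$, but they are not isomorphic unless the kernel is nilpotent; what you want is that \'etale sites agree after Henselization, not the rings.) The robust argument for $\iota^*\iota_* \simeq \id$ runs through the \'etale lifting property along closed immersions --- EGA~IV~18.1.1, the classical counterpart of \propref{prop:Nisnevich lifting} --- which shows that the base-change functor $\Et_X \to \Et_Z$ becomes a localization after passing to \'etale sheaves, with no appeal to stalks.

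Second, your use of the recollement fiber square is mildly circular as stated. The square $\sF \simeq j_*j^*\sF \times_{\iota_*\iota^* j_*j^*\sF} \iota_*\iota^*\sF$ holds for the intrinsic closed subtopos of $\Sh_\et(X)$ cut out by the open $U$; but the content of the theorem is precisely that $\iota_*$ identifies $\Sh_\et(Z)$ with this closed subtopos (full faithfulness plus the image characterization). Invoking the recollement square to prove the image characterization therefore presupposes part of what is being proved. The honest route is: establish full faithfulness of $\iota_*$ by lifting, establish $j^*\iota_* \simeq \pt$ by base change along $Z \times_X U = \initial$, and then deduce essential surjectivity onto $\{\sF : j^*\sF \simeq \pt\}$ either directly (again by lifting) or by now applying the recollement with these two facts in hand. (Minor: you also want the \emph{counit} $\iota^*\iota_* \to \id$ to be an equivalence for full faithfulness of a right adjoint, not the unit.)
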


Theorems \ref{thm:intro/etale nil} and \ref{thm:intro/etale kashiwara} are particular to small sites: for example, they do not hold for sheaves on the big site $\CAlg^\sm_R$ of \emph{smooth} $R$-algebras.
Our main objective in this paper is to show that, if we restrict to sheaves that are \emph{$\A^1$-homotopy invariant}, then these results do have analogues on the big sites (and we can even replace the étale topology by the coarser Nisnevich topology).
To be precise, we have (see \corref{cor:nilpotent invariance affine} and \thmref{thm:kashiwara}):

\begin{thmX}\label{thm:intro/A^1 nilpotent}
For any \cEring $R$, restriction along the canonical functor $\CAlg^\sm_R \to \CAlg^\sm_{\pi_0(R)}$ induces an equivalence from the \inftyCat of $\A^1$-homotopy invariant Nisnevich sheaves $\CAlg^\sm_{\pi_0(R)} \to \Spc$ to the \inftyCat of $\A^1$-homotopy invariant Nisnevich sheaves $\CAlg^\sm_{R} \to \Spc$.
\end{thmX}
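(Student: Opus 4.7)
The natural approach is to reduce to square-zero extensions via the Postnikov tower and then to prove each square-zero case by combining deformation theory for smooth algebras with the A$^1$-contractibility of affine bundles. The map $R \to \pi_0(R)$ factors through the tower $R \simeq \lim_n \tau_{\le n}(R)$, with each step $\tau_{\le n+1}(R) \to \tau_{\le n}(R)$ a square-zero extension with $(n+1)$-connective fiber $\pi_{n+1}(R)[n+1]$. A Postnikov-convergence argument at the level of A$^1$-Nis sheaf $\infty$-categories reduces the theorem to the case of a single square-zero extension $\tilde R \to R$ whose fiber $I$ is $1$-connective.

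For such $\tilde R \to R$, I would analyze the base-change functor $\tilde A \mapsto \tilde A \otimes_{\tilde R} R$ from $\CAlg^\sm_{\tilde R}$ to $\CAlg^\sm_R$ using deformation theory. By \thmref{thm:intro/etale nil}, étale lifts across square-zero extensions are unique; since smooth $R$-algebras are Zariski-locally étale over $\A^n_R$, a smooth lift $\tilde A$ of any smooth $R$-algebra $A$ exists Nisnevich-locally on $\Spec A$. The space of such lifts, when nonempty, forms a torsor under $\mathrm{Map}_A(L_{A/R}, A \otimes_R I[1])$; since $L_{A/R}$ is locally free of finite rank by smoothness, this torsor is Nisnevich-locally an affine bundle over $\Spec A$. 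The decisive observation is that affine bundles on smooth bases are A$^1$-contractible, so the presheaf of lifts is A$^1$-Nis-equivalent to the terminal sheaf. Base change therefore induces the desired equivalence on A$^1$-Nis sheaves, with inverse given by sheafifying the local lifting procedure, and iterating across the Postnikov tower proves the theorem.

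The main obstacle is making the A$^1$-contractibility of the torsor of deformations precise at the level of big-site sheaves, and checking that the local lifting procedure assembles coherently into a global inverse functor that is itself an A$^1$-Nis sheaf. A secondary concern is verifying the Postnikov convergence of A$^1$-Nis sheaf $\infty$-categories needed for the initial reduction, which hinges on the compactness of smooth algebras in the derived sense.
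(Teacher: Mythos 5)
Your strategy --- factoring $R \to \pi_0(R)$ through the Postnikov tower and treating each square-zero layer via deformation theory --- is a genuinely different route from the paper's, which derives Theorem~A as a corollary of the general localization theorem (\thmref{thm:intro/localization}) applied to the closed immersion $\Spec(\pi_0(R)) \hook \Spec(R)$, whose open complement is empty (see \corref{cor:nilpotent invariance}, \corref{cor:nilpotent invariance S_cl} and \corref{cor:nilpotent invariance affine}). However, the two issues you flag as ``obstacles'' and ``secondary concerns'' are in fact genuine gaps, and I do not see how to fill them without essentially re-deriving the machinery of the localization theorem.

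The main gap is the claimed $\A^1$-contractibility of the torsor of lifts. You describe the torsor of smooth lifts of $A$ across a square-zero extension $\tilde R \to R$ as ``Nisnevich-locally an affine bundle over $\Spec A$'' and hence $\A^1$-contractible. That description is inaccurate in the spectral setting. The torsor is under $\Omega^\infty \Map_A\bigl(\sL_{A/R},\, A\otimes_R I[1]\bigr)$; while $\sL_{A/R}$ is locally free of finite rank by smoothness, the coefficient module $A\otimes_R I[1]$ is at least $1$-connective and is in general \emph{not} a locally free $A$-module in degree zero. (Recall that smooth morphisms are not flat in spectral algebraic geometry, so $A\otimes_R I$ can have higher homotopy even for $I$ discrete; moreover the Postnikov fibers are shifted modules $\pi_{n+1}(R)[n+1]$, and $\pi_{n+1}(R)$ need not be finitely generated projective.) The resulting ``bundle'' is therefore a \emph{derived} affine bundle whose cotangent complex is not concentrated in degree zero; it is not smooth, does not belong to $\Sm_{/\Spec A}$, and is covered by no existing $\A^1$-contractibility statement. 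Worse, the relevant $\A^1$ is the spectral affine line $\Spec(\bS\{T\})$, not the flat affine line $\Spec(\bS[T])$ of \remref{rem:flat A^n}, so classical intuition about $\bbG_a$-torsors and affine bundles does not carry over (cf.~\warnref{warn:char p}). The paper's contractibility lemma \lemref{lem:h(E,s) of vector bundle} treats only genuine finite-rank vector bundles in degree zero, and \corref{cor:linear approximation} is precisely the geometric device that reduces the general smooth situation to that special one; your sketch has no substitute for this reduction, and proving the needed contractibility directly would be a substantial new result.

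The second gap is the Postnikov reduction itself. To reduce to square-zero steps you need the comparison functor $\MotSpc(\Spec R) \to \lim_n \MotSpc(\Spec(\tau_{\le n}R))$ to be an equivalence. Lurie's \thmref{thm:intro/etale nil} provides this for the \emph{small} \'etale topos, but for the big smooth site after $\A^1$-Nis localization this convergence is not a formal consequence of compactness of smooth algebras: it is essentially as hard as, and logically prior to, the theorem you are proving (indeed, once the square-zero cases are in hand the tower becomes constant, so the convergence statement and the theorem become interchangeable). The paper never meets this issue because it treats $\Spec(\pi_0 R) \hook \Spec(R)$ in a single stroke. Finally, even if both gaps could be repaired, your route would recover only nilpotent invariance, whereas the paper's proof of the localization theorem simultaneously yields Kashiwara's lemma (\thmref{thm:kashiwara}), closed base change and projection formulas, and the exceptional inverse image functor, with Theorem~A appearing as the empty-complement special case.
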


\begin{thmX}\label{thm:intro/kashiwara}
Let $i : Z \to S$ be a closed immersion of quasi-compact quasi-separated spectral algebraic spaces, with quasi-compact open complement $j : U \hook S$.
Denote by $\Sm_{/S}$, resp. $\Sm_{/Z}$, the \inftyCat of smooth spectral algebraic spaces over $S$, resp. $Z$.
Then the direct image functor $i_*$ defines a fully faithful embedding of the \inftyCat of $\A^1$-invariant Nisnevich sheaves on $\Sm_{/Z}$ into the \inftyCat of $\A^1$-invariant Nisnevich sheaves on $\Sm_{/S}$.
Moreover, an object $\sF$ belongs to the essential image if and only if its inverse image $j^*(\sF)$ is (weakly) contractible.
\end{thmX}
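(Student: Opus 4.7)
The proof constructs the adjunction $(i^*, i_*)$ between the $\htp$-invariant Nisnevich sheaves on $\Sm_{/Z}$ and $\Sm_{/S}$, and then separately verifies (i) that the counit $i^* i_* \to \id$ is an equivalence (fully faithfulness of $i_*$) and (ii) that $\sF$ lies in the essential image of $i_*$ exactly when $j^* \sF \simeq \pt$. Base change along $i : Z \to S$ preserves smoothness, so it defines a functor of sites $\Sm_{/S} \to \Sm_{/Z}$; motivic localization of the induced pullback yields $i^* : \MotSpc(S) \to \MotSpc(Z)$, whose right adjoint $i_*$ exists by the adjoint functor theorem. Smooth base change gives $j^* i_* \sF \simeq \pt$ for every $\sF$ (since $U \times_S Z = \initial$), so the ``only if'' half of (ii) is immediate.

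For the counit $i^* i_* \to \id$, it suffices to test on representables $\Lmot(Y)$ with $Y \in \Sm_{/Z}$. The essential geometric input is a smooth lifting lemma: every affine smooth $Y \to Z$ admits, Zariski-locally on $Y$, a smooth spectral $S$-scheme lift $X \to S$ with $X \times_S Z \shiso Y$. In the spectral setting this follows from Lurie's deformation theory---smooth morphisms are formally smooth, infinitesimally cohesive, and nilcomplete, so that smooth lifts along the closed immersion $i$ are unobstructed. Granted the lift, the identification $i^* \Lmot(X) \simeq \Lmot(Y)$ is essentially tautological, and Nisnevich descent together with the $\htp$-invariance of both sides promotes this to the global equivalence $i^* i_* \Lmot(Y) \simeq \Lmot(Y)$.

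For the ``if'' direction of the essential image characterization, assume $\sF \in \MotSpc(S)$ satisfies $j^* \sF \simeq \pt$; the claim is that the unit $\sF \to i_* i^* \sF$ is an equivalence, equivalently $\sF(T) \isoto \sF(T \times_S Z)$ for every smooth $T \to S$. The strategy is the classical Morel--Voevodsky gluing argument: working Nisnevich-locally on $T$ and invoking smooth lifting, one constructs an étale neighborhood $V$ of $T \times_S Z$ inside $T$ whose structure can be compared to an affine bundle over $T \times_S Z$ up to $\htp$-equivalence; the associated Nisnevich square relating $V$, $T$, and their restrictions to $U$, combined with the contractibility of $\sF$ on $U$-schemes, forces the evaluation on $T$ to coincide with the evaluation on $T \times_S Z$. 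The principal obstacle in the spectral setting, I anticipate, is the smooth lifting lemma itself; once that is available, the geometric arguments follow the template of Morel--Voevodsky (and Hoyois for the algebraic space generalization), possibly after invoking \thmref{thm:intro/A^1 nilpotent} to reduce the remaining combinatorial core to the classical case.
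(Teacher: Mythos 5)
Your outline has the right geometry in view (the smooth lifting lemma, $\A^1$-contraction of tubular neighbourhoods), but there are two concrete gaps. First, the argument for the counit $i^* i_* \to \id$ does not go through: knowing that a representable $\Lmot(Y)$ is (Nisnevich-locally) of the form $i^*\Lmot(X)$ for a lift $X\in\Sm_{/S}$ tells you nothing about $i_*\Lmot(Y)$, and ``Nisnevich descent plus $\htp$-invariance'' does not transport the tautology $i^*\Lmot(X)\simeq\Lmot(Y)$ into the non-tautological identity $i^*i_*\Lmot(Y)\simeq\Lmot(Y)$. The paper never tries to verify the counit on representables. It proves instead two separate statements: that $i_*$ is \emph{conservative} --- this \emph{is} a direct consequence of the lifting lemma \propref{prop:Nisnevich lifting}, since any $Y\in\Sm_{/Z}$ is Nisnevich-locally the base change of some $X\in\Sm_{/S}$ and then $\Gamma(Y,\sF)\simeq\Gamma(X,i_*\sF)$ --- and that the localization square \eqref{eq:localization square} is cocartesian (\thmref{thm:localization}). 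Applying the latter to $i_*\sG$ gives $i_*i^*i_*\sG\isoto i_*\sG$, and conservativity then yields the counit equivalence. In other words, the ``if'' direction of your essential-image characterization already contains full faithfulness once conservativity is in hand, so there is no need for a separate (and here flawed) counit argument.

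Second, the ``if'' direction is not just ``the classical Morel--Voevodsky gluing argument after smooth lifting.'' It is the content of \thmref{thm:localization}, which the paper isolates and proves in \secref{sec:loc} as the main technical result: after reducing via \thmref{thm:i_* commutes with contractible colimits} and \propref{prop:MotSpc generated by affines} to representables, the key statement is the motivic contractibility of the auxiliary presheaf $\hspc[S]{X,t}$ of $Z$-trivialized maps (\propref{prop:h(X,t) is contractible}), and this requires three distinct lemmas --- étale excision for $\hspc[S]{X,t}$ (\lemref{lem:etale invariance of h(X,t)}), linear approximation of a section by the zero section of a vector bundle via the structure theory of quasi-smooth closed immersions (\corref{cor:linear approximation}), and $\A^1$-contraction for vector bundles (\lemref{lem:h(E,s) of vector bundle}). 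Your sketch gives no handle on the étale excision step, which is where the Nisnevich-local argument actually lives. Finally, the suggested appeal to \thmref{thm:intro/A^1 nilpotent} ``to reduce to the classical case'' would be circular: in the paper that statement (\corref{cor:nilpotent invariance affine}) is a corollary of the present theorem, via Corollaries~\ref{cor:nilpotent invariance} and \ref{cor:nilpotent invariance S_cl}.
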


\thmref{thm:intro/kashiwara} can also be viewed as an analogue of Kashiwara's lemma in D-modules (as generalized in \cite{GaitsgoryRozenblyumCrystals} to the setting of spectral algebraic geometry over fields of characteristic zero).
It is essentially a reformulation of our main result, an analogue of the localization theorem of Morel--Voevodsky \cite[Thm.~3.2.21]{MorelVoevodsky} in the setting of spectral algebraic geometry.
By analogy with \emph{op. cit.}, we define a \emph{motivic space} over a spectral algebraic space $S$ as an $\A^1$-invariant Nisnevich sheaf of spaces on $\Sm_{/S}$.
Then we have (see \thmref{thm:localization}):

\begin{thmX}[Localization]\label{thm:intro/localization}
Let $i : Z \to S$ be a closed immersion of quasi-compact quasi-separated spectral algebraic spaces, with quasi-compact open complement $j : U \hook S$.
Let $j_\sharp$ denote the ``extension by zero'' functor, left adjoint to $j^*$.
Then for any motivic space over $S$, there is a cocartesian square
  \begin{equation*}
    \begin{tikzcd}
      j_\sharp j^* (\sF) \ar{r}\ar{d}
        & \sF \ar{d} \\
      j_\sharp (\pt_U) \ar{r}
        & i_* i^* (\sF)
    \end{tikzcd}
  \end{equation*}
of motivic spaces over $S$.
\end{thmX}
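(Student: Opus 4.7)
The plan is to recast the claim as the assertion that a natural comparison map $\varphi\colon \sG \to i_* i^*\sF$ is an equivalence, where $\sG := \sF \sqcup_{j_\sharp j^*\sF} j_\sharp \pt_U$. The map $\varphi$ is constructed from the unit $\sF \to i_* i^*\sF$ together with the canonical map $j_\sharp \pt_U \to i_* i^*\sF$: the latter exists uniquely because $j^* i_* \simeq \pt$ already at the presheaf level, since every $Y \in \Sm_{/U}$ satisfies $Y \times_S Z = \initial$ and sheaves of spaces take the value $\pt$ there. This makes the square commute, producing $\varphi$.

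To show $\varphi$ is an equivalence, I would invoke joint conservativity of $(i^*, j^*)$ on $\MotSpc(S)$ --- itself a consequence of \thmref{thm:intro/A^1 nilpotent}, which lets one identify Nisnevich stalks of motivic spaces at points of $Z$ with stalks on the underlying reduced classical scheme --- and then check $j^*\varphi$ and $i^*\varphi$ separately. On the $j^*$ side, $j^* j_\sharp \simeq \id$ because $\Sm_{/U} \hookrightarrow \Sm_{/S}$ is fully faithful, so $j^*\sG \simeq \pt_U \simeq j^* i_* i^*\sF$. On the $i^*$ side, $i^* j_\sharp$ is a composite of left adjoints (hence colimit-preserving), and on a representable $\hmot[/U]{Y}$ it yields $\hmot[/Z]{Y \times_S Z} \simeq \initial$ (since $Y \times_S Z = \initial$), so it is uniformly initial; thus $i^*\sG \simeq i^*\sF$, which matches $i^* i_* i^*\sF$ provided $i^* i_* \simeq \id$.

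The entire problem therefore reduces to the fully faithfulness of $i_*$, namely $i^* i_* \simeq \id$; this is the main technical content, essentially equivalent to the Kashiwara-type statement \thmref{thm:intro/kashiwara}. My strategy to establish it is: (a)~work Zariski-locally on $S$ to reduce to the affine case $S = \Spec(R)$; (b)~apply Nisnevich descent to reduce further to the case where $R$ is Nisnevich-henselian along $Z$, so that the underlying topology of $S$ collapses onto that of $Z$; (c)~invoke \thmref{thm:intro/A^1 nilpotent} to strip away the derived structure of $R$ and reduce to classical algebraic geometry; (d)~conclude via the classical Morel--Voevodsky localization theorem. The principal obstacle lies in step~(b): since $i_*$ is a right adjoint, it does not a priori commute with the colimits that compute Nisnevich stalks, so the reduction to the henselian case requires verifying by hand that the image of $i_*$ is closed under Nisnevich-local equivalences. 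This compatibility should be extractable from the excision property of Nisnevich squares together with $\A^1$-invariance, and constitutes the technical heart of the argument.
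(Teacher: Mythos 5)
Your reduction scheme has a genuine circularity at its core. You propose to use \thmref{thm:intro/A^1 nilpotent} both to obtain joint conservativity of $(i^*,j^*)$ and (in step~(c)) to strip away derived structure; but in the paper's logical development \thmref{thm:intro/A^1 nilpotent} is \corref{cor:nilpotent invariance affine}, which is deduced \emph{from} the localization theorem via \thmref{thm:kashiwara} and \corref{cor:nilpotent invariance}. So the argument uses the theorem to prove the theorem. Independently of the ordering issue, the claimed ``joint conservativity of $(i^*,j^*)$ on $\MotSpc(S)$'' is not a formal consequence of Nisnevich descent: the pair $\{U\to S,\,Z\to S\}$ is not a Nisnevich covering family (the closed immersion is not \'etale), and the Nisnevich points along $Z$ are henselizations of $S$, not objects of $\Sm_{/Z}$. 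In fact this conservativity statement is essentially equivalent to the Kashiwara-type reformulation \thmref{thm:intro/kashiwara} and hence to the localization theorem itself, so asserting it begs the question even before one gets to $i^*i_*\simeq\id$.

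There is a further subtlety in step~(d). Even if steps (a)--(c) could be made noncircular, \thmref{thm:intro/A^1 nilpotent} would only reduce to a closed immersion of \emph{classical} algebraic spaces inside the \emph{spectral} theory $\MotSpc(-)$, whose affine line is $\A^1_S = \Spec(\bS\{T\})\times S$, not the polynomial affine line of Morel--Voevodsky. The identification of $\MotSpc(S_\cl)$ with the classical motivic homotopy category is only established in characteristic zero (\propref{prop:char 0}) and is otherwise nontrivial (\warnref{warn:char p}), with the general comparison in \cite{CisinskiKhanKH} itself depending on the localization theorem. So reducing to the classical Morel--Voevodsky theorem is not available without another circle.

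Finally, you correctly flag step~(b) --- commutation of $i_*$ with the colimits needed --- as the technical heart, but you leave it open. The paper resolves exactly this point as \thmref{thm:i_* commutes with contractible colimits}, proven via \propref{prop:Nisnevich lifting} and the topos-theoretic \lemref{lem:quasi-cocontinuous and contractible colimits}; this reduces the theorem to representables $\hmot[S]{X}$. The paper's proof then runs entirely differently from your steps (b)--(d): it is a self-contained geometric argument showing that the presheaf $\hspc[S]{X,t}$ of $Z$-trivialized maps is motivically contractible, by Nisnevich-locally lifting the partial section $t$ (\lemref{lem:lifting sections}), approximating a genuine section by the zero section of a vector bundle up to \'etale modification (\corref{cor:linear approximation}, \lemref{lem:etale invariance of h(X,t)}), and then using $\A^1$-contraction of the vector bundle (\lemref{lem:h(E,s) of vector bundle}). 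This is a spectral analogue of Morel--Voevodsky's own argument rather than a reduction to it, and it avoids the circularity your plan runs into.
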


\ssec{Outline}

In order to make sense of \thmref{thm:intro/A^1 nilpotent}, we need to define the notions of \emph{smoothness} and of \emph{$\A^1$-homotopy invariance} in the world of $\Einfty$-ring spectra.
There are two natural ways to define smoothness for a homomorphism of \cErings $A \to B$:
\begin{itemize}
  \item One can require that $B$ is flat as an $A$-module, and that the induced homomorphism of ordinary commutative rings $\pi_0(A) \to \pi_0(B)$ is smooth.

  \item One can require that $B$ is locally of finite presentation as an $A$-algebra, and that the relative cotangent complex $L_{B/A}$ is a finitely generated projective $B$-module.
\end{itemize}
There are also two candidate ``affine lines'' over a \cEring $R$:
\begin{itemize}
  \item The ``flat affine line'' $\A^1_{\flat,\Spec(R)}$ (\remref{rem:flat A^n}), whose $\Einfty$-ring of functions is the polynomial $\Einfty$-algebra $R[T] = R \otimes \Sigma^\infty_+(\bN)$.
  This affine line is smooth in the first sense, and is compatible with the affine line in classical algebraic geometry.
  That is, when $R$ is discrete, $\A^1_{\flat,\Spec(R)}$ is the classical affine line over $R$.

  \item The ``spectral affine line'' $\A^1_{\Spec(R)}$ (\examref{exam:vector bundles}), whose $\Einfty$-ring of functions is the free $\Einfty$-algebra $R\{T\}$ on one generator $T$ (in degree zero).
  This spectral algebraic space is smooth in the second sense, and represents the functor sending an $R$-algebra $A$ to its underlying space $\Omega^\infty(A)$.
\end{itemize}
In this paper we work with the second definition of smoothness, and with the ``intrinsic'' spectral affine line $\A^1_{\Spec(R)}$.
We review the appropriate definitions in detail in \secref{sec:motspc}.
In the setting of derived algebraic geometry (formed out of simplicial commutative rings), the two affine lines collapse into one, so that the resulting $\A^1$-homotopy theory is a much simpler version of the theory developed here (see the author's Ph.D. thesis \cite{KhanThesis}).
In case $R$ is of characteristic zero (an $\Einfty$-$\bQ$-algebra), the theory of spectral algebraic geometry over $R$ is equivalent to derived algebraic geometry over $R$, and the $\A^1$-homotopy theory constructed here recovers the construction of \emph{op. cit}.
Over a general $R$, we have two different affine lines and two \emph{a priori} different versions of $\A^1$-homotopy theory (see \warnref{warn:char p}).

In \secref{sec:results} we turn to our main results, which are all centred around the functor $i_*$ of direct image along a closed immersion $i$.
We begin by proving that $i_*$ commutes with almost all colimits (\thmref{thm:i_* commutes with contractible colimits}).
We then state the localization theorem (\thmref{thm:intro/localization} above) as \thmref{thm:localization}, postponing its proof to \secref{sec:loc}.
We first explain how it implies \thmref{thm:intro/kashiwara} (\thmref{thm:kashiwara}) and \thmref{thm:intro/A^1 nilpotent} (\corref{cor:nilpotent invariance affine}).
As another application, we then proceed to develop part of the formalism of Grothendieck's six operations: the proper base change and projection formulas in the case of closed immersions (Propositions~\ref{prop:closed base change} and \ref{prop:closed projection formula}) and a smooth-closed base change formula (\propref{prop:(smooth,closed)-base change}).

Finally, \secref{sec:loc} is dedicated to the proof of \thmref{thm:intro/localization}.
Aside from generalizing the theorem of Morel--Voevodsky \cite{MorelVoevodsky} to the spectral setting, our statement also differs in a couple other (mutually orthogonal) ways:
\begin{itemize}
  \item We do not impose noetherian hypotheses.
  For this reason, we give a proof of \thmref{thm:intro/localization} that avoids the use of ``points'' and therefore applies to sheaves satisfying \emph{\v{C}ech descent}, as opposed to the (\emph{a priori}) stronger condition of hyperdescent (see \remref{rem:hyper}).
  An alternative approach to removing noetherian hypotheses is to use continuity arguments to reduce to the noetherian case, as described in the classical setting in \cite[App.~C]{HoyoisLefschetz}.

  \item We generalize the result from (spectral) schemes to (spectral) algebraic spaces.
  The key point is that every quasi-compact quasi-separated algebraic space is Nisnevich-locally affine (see \cite[Chap.~II, Thm.~6.4]{KnutsonAlgebraicSpaces}).
  To be precise, one needs a little more than this: see the proof of \propref{prop:Spc_Nis generated by affines}.
  Repeating the proof of \thmref{thm:intro/localization} in the setting of classical algebraic geometry, one can similarly generalize the statement of \cite[Thm.~3.2.21]{MorelVoevodsky} to algebraic spaces.
\end{itemize}
Our proof follows the same general strategy as the original proof of Morel--Voevodsky, but differs in some details.
Let $i : Z \hook S$ be a closed immersion as in the statement.
The first step is to use \propref{prop:Spc_Nis generated by affines} and \thmref{thm:i_* commutes with contractible colimits} to reduce to the case of (the motivic localization of) a sheaf represented by a smooth spectral algebraic space $X$ over the base $S$.
Then given a partially defined section $t : Z \hook X$ over $S$, we have to show that a certain presheaf $\h_S(X,t)$ is motivically contractible.
We achieve this in a few steps:
\begin{itemize}
  \item Nisnevich-locally on $X$, we can lift the partially defined section $t : Z \hook X$ to a section $s : S \hook X$ (\lemref{lem:lifting sections}).
  \item Nisnevich-locally on $X$, the section $s$ can be approximated by the zero section of a trivial vector bundle on $S$, up to some étale morphism that induces an isomorphism over $S$ (\lemref{lem:linear approximation}).
  Moreover, the construction $\h_S(X,t)$ is invariant under such approximations (\lemref{lem:etale invariance of h(X,t)}).
  \item If $X$ is a vector bundle over $S$ (and $t$ is the restriction of the zero section), then $\h_S(X,t)$ is $\A^1$-contractible (\lemref{lem:h(E,s) of vector bundle}).
\end{itemize}

\ssec{Notation and conventions}

We will use the language of \inftyCats freely throughout the text.
Our main references are \cite{HTT,HA-20170918}.
The \inftyCat of spaces will be denoted by $\Spc$, and a morphism in an \inftyCat will be called an \emph{isomorphism} if it is invertible (= an \emph{equivalence} in the language of \cite{HTT}).

The term \emph{spectral algebraic space} will mean a quasi-compact quasi-separated spectral algebraic space as defined in \cite{SAG-20180204}.
An affine spectral scheme is a spectral algebraic space of the form $\Spec(R)$, where $R$ is a \cEring (see e.g.~\cite{HA-20170918}).
Any spectral algebraic space $S$ admits a finite Nisnevich covering by affine spectral schemes \cite[Ex. 3.7.1.5]{SAG-20180204}; it is a (quasi-compact quasi-separated) spectral \emph{scheme} in the sense of \cite{SAG-20180204} if and only if it moreover admits a \emph{Zariski} covering by finitely many affines.
It is a (quasi-compact quasi-separated) \emph{classical} algebraic space if and only if it admits a Nisnevich covering by finitely many \emph{classical} affines (of the form $\Spec(R)$ with $R$ discrete).
Given a spectral algebraic space $S$, we write $S_\cl$ for its \emph{underlying classical algebraic space}, so that $\Spec(R)_\cl = \Spec(\pi_0(R))$ for any \cEring $R$.

\ssec{Acknowledgments}

The author would like to thank Benjamin Antieau, Denis-Charles Cisinski, David Gepner, Marc Hoyois, and Marc Levine for many helpful discussions, encouragement, and feedback on previous versions of this paper.


\section{Motivic spaces}
\label{sec:motspc}

\ssec{Sm-fibred spaces}

\begin{defn}
Let $f : X \to S$ be a morphism of spectral algebraic spaces.
We say that $f$ is \emph{smooth} if it is of finite presentation and the relative cotangent complex $\sL_{X/S}$ is locally free of finite rank.
If moreover the cotangent complex vanishes, then we say that $f$ is \emph{étale}.
\end{defn}

\begin{exam}\label{exam:vector bundles}
Let $\bS$ denote the sphere spectrum, and $\bS\{T_1,\ldots,T_n\}$ the free $\Einfty$-algebra on $n$ generators (in degree zero).
Given a spectral algebraic space $S$, consider for any $n\ge 0$ the \emph{$n$-dimensional spectral affine space}
  \begin{equation*}
    \A^n_S = S \times \Spec(\bS\{T_1,\ldots,T_n\}).
  \end{equation*}
Then the projection $\A^n_S \to S$ has cotangent complex free of rank $n$, and is smooth.
More generally, if $\sE$ is a locally free sheaf of finite rank on $S$, then the associated vector bundle $\pi : \Spec_S(\Sym_{\sO_S}(\sE))\to S$ has relative cotangent complex $\pi^*(\sE)$, and is again smooth.
\end{exam}

\begin{rem}\label{rem:smooth structure}
Nisnevich-locally on $X$, any smooth morphism of spectral algebraic spaces $f : X \to S$ can be factored through an étale morphism $X \to \A^n_S$ and the projection $\A^n_S \to S$ (see \cite[Prop.~11.2.2.1]{SAG-20180204}).
\end{rem}

\begin{warn}
Unlike in classical algebraic geometry, smooth morphisms in spectral algebraic geometry are generally not flat: étale morphisms are flat, but $\A^n_S \to S$ is flat iff $n=0$ or $S$ is of characteristic zero.
In particular, if $S$ is classical, the spectral algebraic space $\A^n_S$ is classical iff $n=0$ or $S$ is of characteristic zero (cf. \warnref{warn:char p}).
\end{warn}

\begin{rem}\label{rem:flat A^n}
There is a variant of the construction $\A^n_S$ that is flat over $S$ (but usually not smooth).
Namely, let $\bS[T_1,\ldots,T_n]$ denote the polynomial $\Einfty$-algebra on $n$ generators over $\bS$ (in degree zero); this is by definition the suspension spectrum $\Sigma^\infty_+(\bN^n)$, where the (additive) commutative monoid $\bN^n$ is viewed as a discrete $\Einfty$-space.
If we set
  \begin{equation*}
    \A^n_{\flat,S} = S \times \Spec(\bS[T_1,\ldots,T_n]),
  \end{equation*}
then the projection $\A^n_{\flat,S} \to S$ is flat.
\end{rem}

\begin{defn}
Let $S$ be a spectral algebraic space.
A \emph{$\Sm$-fibred space} over $S$, or simply a \emph{fibred space} over $S$, is a presheaf of spaces on the \inftyCat $\Sm_{/S}$ of smooth spectral algebraic spaces over $S$.
We write $\Spc(S)$ for the \inftyCat of $\Sm$-fibred spaces over $S$, and denote the Yoneda embedding by $X \mapsto \h_S(X)$.
\end{defn}

\ssec{Nisnevich descent}

In this paragraph we discuss the property of Nisnevich descent for an $\Sm$-fibred space.
One very pleasant feature of the Nisnevich topology, compared to the étale topology, is that the sheaf condition can be described using finite limits (see \thmref{thm:excision=descent}).

\begin{defn}
Let $S$ be a spectral algebraic space and $X \in \Sm_{/S}$.
A \emph{Nisnevich square} over $X$ is a cartesian square of spectral algebraic spaces
  \begin{equation} \label{eq:Nisnevich square}
    \begin{tikzcd}
      W \arrow{r}\arrow{d}
        & V \arrow{d}{p}
      \\
      U \arrow{r}{j}
        & X
    \end{tikzcd}
  \end{equation}
where $j$ is an open immersion, $p$ is \'etale, and there exists a closed immersion $Z \hook X$ complementary to $j$ such that the induced morphism $p^{-1}(Z) \to Z$ is invertible.
\end{defn}

\begin{defn}
Let $\sF \in \Spc(S)$ be a fibred space over $S$.
We say that $\sF$ satisfies \emph{Nisnevich excision} if it is reduced, i.e. the space $\Gamma(\initial, \sF)$ is contractible, and for any Nisnevich square over $X$ of the form \eqref{eq:Nisnevich square}, the induced square of spaces
  \begin{equation*}
    \begin{tikzcd}
      \Gamma(X, \sF) \arrow{r}{j^*}\arrow{d}{p^*}
        & \Gamma(U, \sF) \arrow{d}
      \\
      \Gamma(V, \sF) \arrow{r}
        & \Gamma(W, \sF)
  \end{tikzcd}
  \end{equation*}
is cartesian.
\end{defn}

\begin{rem}\label{rem:Nisnevich filtered colimits}
Being defined by finite limits, the property of Nisnevich excision is stable under filtered colimits and small limits in $\Spc(S)$.
\end{rem}

\begin{defn}\label{defn:Nisnevich descent}
Let $\sF \in \Spc(S)$ be a fibred space over $S$.
We say that $\sF$ satisfies \emph{Nisnevich descent} if it is reduced and for any Nisnevich square \eqref{eq:Nisnevich square}, the canonical morphism of spaces
  \begin{equation*}
    \Gamma(X, \sF) \to \Tot \left(\Gamma(\Cech(\tilde{X}/X)_\bullet, \sF)\right)
  \end{equation*}
is invertible, where $\tilde{X} = U \coprod V$, the simplicial object $\Cech(\tilde{X}/X)_\bullet$ is the \v{C}ech nerve of the morphism $\tilde{X} \to X$, and ``$\Tot$'' denotes totalization of a cosimplicial diagram.
\end{defn}

\begin{constr}\label{constr:Nisnevich topology}
Consider the Grothendieck pretopology on $\Sm_{/S}$ generated by the following covering families: (a) the empty family, covering the empty scheme $\initial$; (b) for any $X \in \Sm_{/S}$ and for any Nisnevich square over $X$ of the form \eqref{eq:Nisnevich square}, the family $\{U \to X, V \to X\}$, covering $X$.
We call the associated Grothendieck topology the \emph{Nisnevich topology}.
Then $\sF \in \Spc(S)$ satisfies Nisnevich descent in the sense of \defref{defn:Nisnevich descent} if and only if it it is a sheaf with respect to the Nisnevich topology (in the sense of \cite{HTT}).
\end{constr}

\begin{thm}\label{thm:excision=descent}
Let $S$ be a spectral algebraic space and $\sF$ a $\Sm$-fibred space over $S$.
Then $\sF$ satisfies Nisnevich excision if and only if it satisfies Nisnevich descent.
\end{thm}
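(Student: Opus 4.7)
The statement is a version of Voevodsky's theorem on cd-structures, adapted to the setting of spectral algebraic spaces and $\infty$-sheaves of spaces. The plan is to prove the equivalence in both directions, with the bulk of the work lying in the non-obvious implication (excision $\Rightarrow$ descent). Both directions hinge on the same geometric analysis of a Nisnevich square: the open immersion $j : U \hook X$ is a monomorphism, so $U \times_X U \simeq U$; and the fact that $p^{-1}(Z) \to Z$ is an isomorphism means the section $Z \to p^{-1}(Z) \subseteq V$ is an étale monomorphism, hence an open immersion, so that $V$ admits a clopen decomposition $V = W \sqcup V'$ with $V' \simeq Z$.

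For the direction descent $\Rightarrow$ excision, I would take a Nisnevich square and apply descent to the cover $\tilde{X} = U \sqcup V \to X$, producing the identification $\Gamma(X, \sF) \simeq \Tot \Gamma(\Cech(\tilde{X}/X)_\bullet, \sF)$. The task is then to show this totalization computes the excision fiber product $\Gamma(U, \sF) \times_{\Gamma(W, \sF)} \Gamma(V, \sF)$. Using the above geometric observations, each term $\tilde{X}^{n+1}_X$ of the \v{C}ech nerve splits into a disjoint union indexed by sequences $[n] \to \{U, W, V'\}$; sequences mixing $\{U, W\}$ with $V'$ contribute empty factors (since $Z$ and $U$ are disjoint in $X$), pure $V'$-sequences contribute $V'$ (since $V' \to Z$ is iso), and pure $\{U, W\}$-sequences contribute iterated fiber products over $U$. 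A direct analysis of the resulting cosimplicial diagram shows its totalization agrees with the Mayer--Vietoris pullback.

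For the direction excision $\Rightarrow$ descent, this is the heart of the theorem and is the spectral $\infty$-categorical analog of Voevodsky's result on cd-structures. The Nisnevich coverings on $\Sm_{/S}$ described in \constrref{constr:Nisnevich topology} are generated by the cd-structure whose distinguished squares are the Nisnevich squares, together with the empty cover of $\initial$. I would verify that this cd-structure is \emph{complete} (every Nisnevich cover of $X \in \Sm_{/S}$ is refined by iterated Nisnevich squares, using \cite[Ex.~3.7.1.5]{SAG-20180204}), \emph{regular} (pullback stability of Nisnevich squares, plus the geometric fact that the diagonal of an étale morphism is clopen), and \emph{bounded}. Granted these, the general cd-structure theorem gives that a presheaf is a Nisnevich sheaf iff it satisfies excision for each distinguished square and takes $\initial$ to a contractible space.

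The main obstacle is the verification of the cd-structure axioms, especially regularity, in the spectral setting. However, these axioms are statements about the underlying topological data of Nisnevich squares—open/closed decompositions, pullbacks of étale morphisms, and iterated refinements—which in the spectral setting are controlled by the corresponding data on $S_\cl$ (since Nisnevich squares of spectral algebraic spaces correspond bijectively to Nisnevich squares of their underlying classical algebraic spaces). Consequently, the cd-structure verification reduces directly to its classical counterpart, and the combinatorial collapse in the first direction reduces likewise; the theorem follows without essential change from the classical case.
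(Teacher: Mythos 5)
The overall strategy you describe — prove a general theorem about cd-structures and apply it to Nisnevich squares — is indeed what the paper does (via Theorem~\ref{thm:abstract excision=descent}, which is the $\infty$-categorical version of Voevodsky's \cite[Cor.~5.10]{VoevodskyCD}). However, there are two concrete problems with your implementation.

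First, the geometric observation driving your combinatorial analysis of the \v{C}ech nerve is false. You assert that the section $Z \to p^{-1}(Z) \subseteq V$ is an \'etale monomorphism, hence an open immersion, so that $V = W \sqcup V'$ with $V' \simeq Z$ a clopen piece. But $Z \to V$ is a \emph{closed} immersion (the isomorphism $Z \simeq p^{-1}(Z)$ followed by the closed immersion $p^{-1}(Z) \hookrightarrow V$), and it is not open in general. For instance, take $X = V = \Spec(\bZ)$ with $p = \id$, $Z = \Spec(\bF_p)$, $U = W = \Spec(\bZ[1/p])$: then $V$ is certainly not the coproduct of $W$ and $Z$. \'Etaleness of $p$ makes the \emph{diagonal} $V \to V\times_X V$ an open (indeed clopen) immersion, but it does not make the closed subspace $p^{-1}(Z)$ open in $V$. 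Without the clopen decomposition, the proposed indexing of $\tilde{X}^{n+1}_X$ by sequences in $\{U, W, V'\}$ does not exist, and the combinatorial collapse does not go through.

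Second, for the direction excision $\Rightarrow$ descent you invoke the cd-structure theorem after verifying completeness, regularity, \emph{and boundedness}. Boundedness is the axiom used to upgrade from \v{C}ech descent to hyperdescent; it requires a dimension hypothesis that is not available here. The paper deliberately works with arbitrary qcqs spectral algebraic spaces (no noetherian hypotheses), and correspondingly defines Nisnevich descent using only \v{C}ech nerves (see Definition~\ref{defn:Nisnevich descent} and Remark~\ref{rem:hyper}). So you would either be proving a statement stronger than the one claimed, or getting stuck trying to verify boundedness. The correct tool is the \v{C}ech-only version of Voevodsky's theorem, which needs only base-change stability of the distinguished squares, the monomorphism condition on the horizontal arrows, and the ``diagonal'' condition (conditions (a)--(c) of Theorem~\ref{thm:abstract excision=descent}), and \emph{no} boundedness. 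That abstract theorem is proved by showing that the class of local equivalences generated by the excision squares coincides with the class generated by the associated \v{C}ech covers, via a cartesian ``magic square'' argument — a route that sidesteps both the boundedness issue and the geometric decomposition of $V$ you were relying on.
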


\thmref{thm:excision=descent} follows from a general result of Voevodsky \cite[Cor.~5.10]{VoevodskyCD} (cf. \cite[Thm.~3.2.5]{AsokHoyoisWendt}).

\begin{thm}[Voevodsky]\label{thm:abstract excision=descent}
Let $\bC$ be an \inftyCat admitting fibred products.
Let $\sE$ be a set of cartesian squares which is closed under isomorphism and satisfies the following properties:

\noindent{\em(a)}
The set $\sE$ is closed under base change.
More precisely, suppose that $Q$ is a cartesian square in $\bC$ of the form
  \begin{equation}\label{eq:abstract excision square}
    \begin{tikzcd}
      Q(1,1) \ar{r}\ar{d}
        & Q(0,1) \ar{d}
      \\
      Q(1,0) \ar{r}
        & Q(0,0)
    \end{tikzcd}
  \end{equation}
that belongs to $\sE$.
Then its base change along any morphism $c \to Q(0,0)$ in $\bC$ also belongs to $\sE$.

\noindent{\em(b)}
For every square $Q$ in $\sE$ of the form \eqref{eq:abstract excision square}, the lower horizontal arrow $Q(1,0) \to Q(0,0)$ is a monomorphism (i.e., its diagonal $\Delta : Q(1,0) \to Q(1,1) \fibprod_{Q(0,0)} Q(1,0)$ is invertible).

\noindent{\em(c)}
For every square $Q$ in $\sE$ of the form \eqref{eq:abstract excision square}, the right-hand vertical arrow $Q(0,1) \to Q(0,0)$ is $k$-truncated for some $k\ge 0$.

\noindent{\em(d)}
For every square $Q$ in $\sE$ of the form \eqref{eq:abstract excision square}, the induced square
  \begin{equation*}
    \begin{tikzcd}
      Q(1,1) \ar{r}\ar{d}{\Delta}
        & Q(0,1) \ar{d}{\Delta}
      \\
      Q(1,1) \fibprod_{Q(1,0)} Q(1,1) \ar{r}
        & Q(0,1) \fibprod_{Q(0,0)} Q(0,1)
    \end{tikzcd}
  \end{equation*}
also belongs to $\sE$.

\noindent
Then for any presheaf $\sF : (\bC)^\op \to \Spc$, the following two conditions are equivalent:

\noindent{\em(i)}
The presheaf $\sF$ sends every square in $\sE$ to a cartesian square of spaces.

\noindent{\em(ii)}
For any square $Q \in \sE$, write $\Cech(Q)_\bullet$ for the \v{C}ech nerve of the morphism $Q(1,0) \coprod Q(0,1) \to Q(0,0)$.
Then the canonical map of spaces
  \begin{equation*}
    \sF(Q(0,0)) \to \Tot(\sF(\Cech(Q)_\bullet))
  \end{equation*}
is invertible.
\end{thm}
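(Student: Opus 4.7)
The plan is to establish the two implications directly. Write $X = Q(0,0)$, $U = Q(0,1)$, $V = Q(1,0)$, $W = Q(1,1)$, and $\tilde X = U \coprod V$ with its canonical map $p : \tilde X \to X$, whose \v{C}ech nerve is $\Cech(Q)_\bullet$.

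For the direction (ii)$\Rightarrow$(i): I would extract the cartesian square condition from the lowest simplicial degrees. Hypothesis (b) gives $U \fibprod_X U \simeq U$, while $U \fibprod_X V \simeq V \fibprod_X U \simeq W$ by the cartesianness of $Q$, so
\[
\Cech(Q)_1 \simeq U \sqcup W \sqcup W \sqcup (V \fibprod_X V),
\]
and $\Cech(Q)_0 \simeq U \sqcup V$. Expressing $\Tot\sF(\Cech(Q)_\bullet)$ as a limit over $\bDelta^\op$ and comparing the $\le 1$-truncated part with its coface maps, one reads off that $\sF(X) \to \sF(U) \fibprod_{\sF(W)} \sF(V)$ is an equivalence, which is (i).

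For the direction (i)$\Rightarrow$(ii): The goal is to compute each $\Cech(Q)_n$ as a disjoint union of objects controlled by $U$, $V$, $W$, and then show that applying $\sF$ produces a cosimplicial object whose totalization equals $\sF(X)$. The inductive step uses all three hypotheses in a complementary way: (b) collapses any repeated $U$-factor in an iterated fibre product; (a) allows base-changing $Q$ along any of the projections out of such fibre products, so that each ``mixed'' factor $U \fibprod_X \cdots \fibprod_X V$ is cartesian and controlled; and (c) controls the iterated $V$-factors by ensuring that the diagonal of $V \to X$ fits into a square in $\sE$ with apex $W$, so $V \fibprod_X V$ decomposes as an extension of $V$ by a copy of $W$. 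Applying $\sF$ and using that (a)+(c) guarantee $\sF$ sends each of the higher ``mixed'' and ``diagonal'' squares to a cartesian square, one shows that the restriction of $\sF(\Cech(Q)_\bullet)$ along $\bDelta^\op_{\le 1} \hookrightarrow \bDelta^\op$ is cofinal in computing the totalization, which then collapses to the pullback $\sF(U) \fibprod_{\sF(W)} \sF(V)$ already identified with $\sF(X)$ in (i).

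The main obstacle is the simplicial bookkeeping: one must verify that every higher matching object of $\sF(\Cech(Q)_\bullet)$ is correctly built from $\sF(U)$, $\sF(V)$, $\sF(W)$ and that all cosimplicial structure maps are compatible with the collapses provided by (b) and (c). Rather than carrying this out by hand in the $\infty$-categorical setting, I would follow the original strategy of \cite[Cor.~5.10]{VoevodskyCD}: conditions (a), (b), (c) correspond precisely to the \emph{completeness}, \emph{regularity}, and (trivially, as only one square is involved at a time) \emph{boundedness} of the cd-structure generated by $Q$, and Voevodsky's argument there carries over verbatim to the $\infty$-categorical setting once totalizations of cosimplicial spaces are used in place of the equalizer formulas appropriate in the $1$-categorical case.
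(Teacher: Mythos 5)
Your fall-back — that this is Voevodsky's theorem from \cite{VoevodskyCD} transported to the $\infty$-categorical setting — is exactly right, and is what the paper does. But the two ``direct'' reductions you lead with each contain a real gap. For (ii)$\Rightarrow$(i): the totalization $\lim_{[n]\in\bDelta}\sF(\Cech(Q)_n)$ is a limit over all of $\bDelta$, and there is no general reason it should agree with the limit over $\bDelta_{\le 1}$; you cannot simply ``read off'' the pullback condition from the low simplicial degrees. Symmetrically, for (i)$\Rightarrow$(ii): the claim that restriction to $\bDelta_{\le 1}$ is ``cofinal in computing the totalization'' is false — that inclusion is not initial, so limits do not restrict along it. Both gaps are really the same gap, and closing it is precisely the content that requires hypotheses (a)–(c).

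The paper closes it the way Voevodsky does: work entirely in $\PSh(\bC)$. For $Q\in\sE$ put $K_Q := \h(Q(0,1))\fibcoprod_{\h(Q(1,1))}\h(Q(1,0))$ (a pushout of representables) and let $C_Q$ be the geometric realization of $\Cech(Q)_\bullet$, both equipped with canonical maps $k_Q$, $c_Q$ to $\h(Q(0,0))$. Conditions (i) and (ii) are, respectively, locality against the family $\sK_\sE=\{k_Q\}_{Q\in\sE}$ and against $\sC_\sE=\{c_Q\}_{Q\in\sE}$. Forming the pullback of $k_Q$ and $c_Q$ over $\h(Q(0,0))$, one shows that the base change of $c_Q$ along $k_Q$ is already invertible (the \v{C}ech nerve splits after pullback to any leg of $Q$), and that the base change of $k_Q$ along $c_Q$ is simultaneously a $\sK_\sE$-local and a $\sC_\sE$-local equivalence, by decomposing $C_Q$ over the objects $Q(i,j)$ and invoking (a), (b), (c) at the appropriate moments; two-out-of-three in strongly saturated classes then forces the two localizations to coincide. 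One last correction to your dictionary: (a), (b), (c) taken together are exactly \emph{regularity} of the cd-structure in Voevodsky's sense, not three separate axioms; \emph{completeness} is an independent condition about the generated topology, and \emph{boundedness} concerns hyperdescent — neither plays a role in this particular statement.
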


\begin{notat}
Given a square $Q \in \sE$ of the form \eqref{eq:abstract excision square}, it will be useful to generalize the notation as follows: for each pair of integers $i,j \ge 0$, let $Q(i,j) $ denote the object
  \begin{equation*}
    Q(i,j) := Q(1,0)^{\times i} \fibprod_{Q(0,0)} Q(0,1)^{\times j}
  \end{equation*}
in $\bC$, where $Q(1,0)^{\times i}$ denotes the $i$-fold fibred product of $Q(1,0)$ with itself over $Q(0,0)$ (and similarly for $Q(0,1)^{\times j}$).
\end{notat}

\begin{proof}[Proof of \thmref{thm:abstract excision=descent}]
The proof is essentially the same as in the case where $\bC$ is a $1$-category, but we reproduce it here for the reader's convenience.
Given a square $Q \in \sE$, let $K_Q$ denote the colimit of the diagram $\h(Q(0,1)) \gets \h(Q(1,1)) \to \h(Q(1,0))$ (formed in the \inftyCat of presheaves), and let $\sK_\sE$ denote the set of canonical morphisms $k_Q : K_Q \to \h(Q(0,0))$ for all $Q \in \sE$.
Note that a presheaf $\sF$ satisfies condition (i) if and only if it is $\sK_\sE$-local.
Similarly, let $C_Q$ denote the geometric realization of the \v{C}ech nerve $\Cech(Q)_\bullet$, and $\sC_\sE$ the set of canonical morphisms $c_Q : \Cech(Q)_\bullet \to \h(Q(0,0))$ for all $Q \in \sE$.
Then a presheaf $\sF$ satisfies condition (ii) if and only if it is $\sC_\sE$-local.
For any $Q \in \sE$ as in \eqref{eq:abstract excision square}, form the cartesian square of presheaves
  \begin{equation}\label{eq:Voevodsky's magic square}
    \begin{tikzcd}
      K_Q \fibprod_{\h(Q(0,0))} C_Q \ar{r}{p(Q)}\ar{d}{q(Q)}
        & K_Q \ar{d}{k_Q}
      \\
      C_Q \ar{r}{c_Q}
        & \h(Q(0,0)).
    \end{tikzcd}
  \end{equation}
We will show that (1) the morphism $p(Q)$ is invertible, and that (2) $q(Q)$ is both a $\sK_\sE$-local equivalence and a $\sC_\sE$-local equivalence.
Since any class of local equivalences is strongly saturated \cite[Lem.~5.5.4.11]{HTT} and in particular satisfies the two-of-three property, it will follow that the classes of $\sK_\sE$-local and $\sC_\sE$-local equivalences coincide, and therefore that conditions (i) and (ii) are equivalent.

For (1), it suffices by universality of colimits to show that $c_Q$ becomes invertible after base change along any of the morphisms $Q(0,1) \to Q(1,1)$, $Q(1,0) \to Q(0,0)$, or $Q(1,1) \to Q(0,0)$.
Since the morphism $Q(0,1) \coprod Q(1,0) \to Q(0,0)$ splits after any of these base changes, it follows that the augmented simplicial object $\Cech(Q)_\bullet \to \h(Q(1,1))$ also becomes split after any of these base changes.

For (2), write $Q_{ij}$ for the base change of the square $Q$ along $Q(i,j) \to Q(0,0)$, for $i,j\ge 0$.
By universality of colimits, it will suffice to show that each $k_{Q_{ij}}$ is a $\sK_\sE$-local and $\sC_\sE$-local equivalence (for $i+j \ge 1$).
The former claim follows from assumption (a).
For $i \ge 1$, assumption (b) implies that the lower horizontal arrow in the square $Q_{ij}$ is invertible; in this case it is clear that the morphism $k_{Q_{ij}}$ is invertible.
Therefore we may set $i=0$ and consider the squares $Q_{0j}$ for $j\ge 1$; it will suffice to show that $k_{Q_{ij}}$ is invertible for sufficiently large $j$.
Note that in the commutative diagram
  \begin{equation*}
    \begin{tikzcd}
      Q(1,j)\ar{r}\ar{d}{\Delta}
        & Q(0,j) \ar{d}{\Delta}
      \\
      Q(1,j+1)\ar{r}\ar{d}
        & Q(0,j+1)\ar{d}
      \\
      Q(1,j) \ar{r}
        & Q(0,j)
    \end{tikzcd}
  \end{equation*}
both squares are cartesian, the vertical composites are identities, and the lower square is canonically identified with $Q_{0j}$.
Since the class of $\sC_\sE$-local equivalences is closed under retracts and cobase change, it will suffice to show that $k_{Q'}$ is a $\sC_\sE$-local equivalence, where $Q'$ denotes the upper square.
Note that by assumptions (a) and (d), the square $Q'$ belongs to $\sE$.
By assumption (b) its lower horizontal arrow is a monomorphism, and by (c) its right-hand vertical arrow is $(k-1)$-truncated (where $k$ is such that $Q(0,1)\to Q(0,0)$ is $k$-truncated).
Therefore we may replace $Q$ by $Q'$ and assume that the vertical arrow $Q(1,0) \to Q(0,0)$ is $(k-1)$-truncated.
Repeating the above argument recursively we eventually reduce to the case where both horizontal and vertical legs of the square $Q$ are $(-1)$-truncated (= monomorphisms).
For such $Q$, observe that in each of the squares $Q_{ij}$ ($i+j \ge 1$), one of the legs is invertible.
Then it is obvious that $k_{Q_{ij}}$ is invertible, so that $q(Q)$ is invertible.
Then the square \eqref{eq:Voevodsky's magic square} shows that $k_Q$ is a $\sC_\sE$-local equivalence.
\end{proof}

\begin{proof}[Proof of \thmref{thm:excision=descent}]
Apply \thmref{thm:abstract excision=descent} to the set of Nisnevich squares.
It is easy to see that the assumptions hold (recall that étale morphisms are $0$-truncated).
\end{proof}

\begin{rem}\label{rem:Lurie Nis topology}
From \cite[Thm.~3.7.5.1]{SAG-20180204} and \thmref{thm:excision=descent} it follows that the topology defined in \constrref{constr:Nisnevich topology} coincides with Lurie's version of the Nisnevich topology constructed in \sectsign~3.7.4 of \emph{loc. cit}.
\end{rem}

\begin{rem}\label{rem:hyper}
Note that in our definition of the Nisnevich descent property we consider only \v{C}ech covers as opposed to arbitrary hypercovers (see \cite[\sectsign~6.5.4]{HTT}).
If $S$ is noetherian and of finite dimension, then there is no difference \cite[Cor. 3.7.7.3]{SAG-20180204}.
\end{rem}

\begin{constr}
Let $\Spc_\Nis(S)$ denote the full subcategory of $\Spc(S)$ spanned by Nisnevich excisive fibred spaces.
By \thmref{thm:excision=descent} this is an exact left localization, and we denote the localization functor by $\sF \mapsto \LNis(\sF)$.
We say that a morphism in $\Spc(S)$ is a \emph{Nisnevich-local equivalence} if it induces an isomorphism after Nisnevich localization.
\end{constr}

\begin{exam}\label{exam:K-theory}
Given a spectral algebraic space $X$, let $\K(X)$ denote the nonconnective algebraic K-theory spectrum of the stable \inftyCat of perfect complexes on $X$.
Then the assignment $X \mapsto \Omega^\infty \K(X)$, viewed as an $\Sm$-fibred space over a spectral algebraic space $S$, is Nisnevich excisive.
This follows from compact generation of quasi-coherent sheaves on spectral algebraic spaces\footnotemark~ as in \cite[Thm.~9.6.1.1, Cor.~9.6.3.2]{SAG-20180204} and \cite[Prop.~6.9, Thm.~6.11]{AntieauGepnerBrauer} (see e.g. \cite[Prop.~A.13]{ClausenMathewNaumannNoelDescent}).
\footnotetext{Recall that for us, all spectral algebraic spaces are implicitly quasi-compact and quasi-separated.}
\end{exam}

\begin{prop} \label{prop:Spc_Nis generated by affines}
For any spectral algebraic space $S$, the \inftyCat $\Spc_\Nis(S)$ is generated under sifted colimits by objects of the form $\h_S(X)$, where $X \in \Sm_{/S}$ is affine and $X \to S$ factors through an étale morphism to a spectral affine space $\A^n_S$, for some $n\ge 0$.
\end{prop}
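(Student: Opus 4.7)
The plan is to exhibit every Nisnevich-localized representable $\LNis\h_S(X)$ for $X \in \Sm_{/S}$ as a geometric realization (hence a sifted colimit) of representables of objects of the specified form. Since $\Spc_\Nis(S)$ is a reflective localization of $\PSh(\Sm_{/S})$, it is generated under arbitrary colimits by Nisnevich-localized representables, so reducing to this case suffices. The essential tool is Nisnevich descent (\thmref{thm:excision=descent}).

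First I would establish the basis property: every $X \in \Sm_{/S}$ admits a Nisnevich cover by objects of the specified form (affine, factoring étale through some $\A^n_S$). The spectral version of Knutson's theorem \cite[Ex.~3.7.1.5]{SAG-20180204} produces a finite Nisnevich cover of $X$ by affine spectral schemes. Since the relative dimension of $X \to S$ is locally constant, after refinement each affine piece has constant relative dimension $n$, and then \remref{rem:smooth structure} supplies a further Nisnevich refinement in which each affine piece factors through an étale morphism to $\A^n_S$.

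The main part of the argument reduces to the case where $X$ is itself affine and has constant relative dimension $n$ over $S$ (by combining the clopen decomposition of $X$ by locally constant relative dimension with a preliminary Knutson cover of $X$ by affines). In this situation one chooses a Nisnevich cover $U \to X$ with $U$ a disjoint union of pieces each étale over the \emph{same} $\A^n_S$; then $U$ itself is affine and étale over $\A^n_S$, hence of the specified form. Nisnevich descent then yields
\[ \LNis\h_S(X) \;\simeq\; \bigl|\LNis\h_S(\Cech_\bullet(U/X))\bigr|. \]
Because $X$ and $U$ are both affine, the étale morphism $U \to X$ is affine, so each iterated fiber product $U^{\times_X (k+1)}$ is affine; moreover it factors étale through $\A^n_S$ via the composite of the projection $U^{\times_X (k+1)} \to U$ (étale by iterated base change) with $U \to \A^n_S$ (étale by construction). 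Hence every term of the \v{C}ech nerve belongs to the specified class.

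The main obstacle I anticipate lies in cleanly executing the reduction to the affine constant-relative-dimension case: the fiber products of affines over a general (non-affine) smooth algebraic space $X$ need not be affine, so a single application of Nisnevich descent is insufficient. The remedy is a two-step descent, first Nisnevich-locally reducing $X$ to an affine piece via Knutson's theorem and then running the \v{C}ech argument above inside that affine. The bookkeeping needed to combine these two sifted-colimit presentations into a single one (and to verify that the class is closed under the relevant disjoint unions, i.e. those of pieces sharing a common relative dimension) is the subtle point of the proof.
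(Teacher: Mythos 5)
Your high-level strategy (reduce to Nisnevich-sheafified representables via \cite[Lem.~5.5.8.14]{HTT}, then present $\LNis\h_S(X)$ as a sifted colimit by \v{C}ech descent along a cover by ``good'' affines) matches the paper's opening moves, and you correctly identify the central obstacle: fiber products of affines over a non-separated base need not be affine. However, the ``two-step descent'' you propose as a remedy does not actually resolve this obstacle --- it restates it. If you cover a non-separated $X$ by affines $\{U_i\}$ (via Knutson / \cite[Ex.~3.7.1.5]{SAG-20180204}) and pass to the \v{C}ech nerve of $\coprod_i U_i \to X$, the overlap terms $U_{i_0}\fibprod_X\cdots\fibprod_X U_{i_k}$ are precisely the non-affine objects you were trying to avoid, so ``running the \v{C}ech argument inside the affine'' does not apply at this stage: nothing has yet been reduced to an affine. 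Without a further idea, the descent does not terminate.

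The paper's proof closes this gap with \emph{scallop decompositions} (\cite[Thm.~3.5.2.1]{SAG-20180204}): a finite filtration $\initial = U_0 \hook U_1 \hook \cdots \hook U_n = X$ by quasi-compact opens, where each $U_i$ is obtained from $U_{i-1}$ by gluing on an affine $V_i$, \'etale over $U_i$, along $W_i = U_{i-1}\fibprod_{U_i}V_i$. The well-founded structure is essential: in the \v{C}ech nerve of the two-element cover $\{U_{i-1},V_i\} \to U_i$, the diagonal term $U_{i-1}$ is handled by induction on the filtration length, while the remaining terms are built from $V_i$ and $W_i$. This gets applied twice. In the first pass, $W_i$ is open in the affine $V_i$, hence separated, so one reduces to the case $X$ separated. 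In the second pass, separatedness of $U_i$ makes the diagonal $U_i \to U_i\times U_i$ a closed immersion, so the cartesian square exhibiting $W_i$ as a closed subspace of $U_{i-1}\times V_i$ shows that $W_i$ is affine, reducing finally to the affine case. Your proposal contains neither this intermediate separated stage nor the well-founded induction along a filtration, and these are exactly what make the reduction to affines terminate.

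Two smaller remarks. First, your clopen decomposition by relative dimension is unnecessary overhead: once one has reduced to $X$ good (\'etale over a fixed $\A^n_S$), every \'etale cover of $X$ is automatically \'etale over that same $\A^n_S$, so the dimension $n$ propagates through the induction for free. Second, be careful with the claim that a disjoint union of pieces \'etale over ``the same'' $\A^n_S$ is again of the specified form: this is fine for \emph{finite} disjoint unions, and the common $n$ is guaranteed only after the reduction to a good $X$, so the order of reductions matters.
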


\begin{proof}[Proof of \propref{prop:Spc_Nis generated by affines}]
Say that $X \in \Sm_{/S}$ is \emph{good} if it admits an étale $S$-morphism to $\A^n_S$ for some $n\ge 0$.
Let $\bC$ denote the full subcategory of $\Spc_\Nis(S)$ generated under sifted colimits by representables of the form $\h_S(X)$ with $X$ affine and good.
From \cite[Lem.~5.5.8.14]{HTT} it follows that $\Spc_\Nis(S)$ is generated under sifted colimits by the representables, so it will suffice to show that every representable space $\h_S(X)$, $X \in \Sm_{/S}$, belongs to $\bC$.
Using \remref{rem:smooth structure} we may write $\h_S(X)$ as the colimit of a simplicial diagram where each term is good (namely, take the \v{C}ech nerve of a suitable Nisnevich covering family of $X$).
We may assume therefore that $X$ is good.

Choose a \emph{scallop decomposition}\footnotemark~$\initial = U_0 \hook U_1 \hook \cdots \hook U_n = X$ \cite[Thm. 3.5.2.1]{SAG-20180204}, i.e. a sequence of open immersions such that for each $1\le i \le n$ there exists an affine $V_i$ fitting in a commutative square
  \begin{equation*}
    \begin{tikzcd}
      W_i \ar[hookrightarrow]{r}\ar{d}
        & V_i \ar{d}
      \\
      U_{i-1} \ar[hookrightarrow]{r}
        & U_i
    \end{tikzcd}
  \end{equation*}
which is cartesian and cocartesian (in the \inftyCat of spectral algebraic spaces), with $V_i \to U_i$ étale.
\footnotetext{If $X$ admits a Zariski covering by affines, then one can take $V_i \to U_i$ to be open immersions, with $U_i = V_1 \cup V_2 \cup \cdots \cup V_i$.}
For each $i$ the morphisms $U_{i-1} \hook U_i$, $V_i \to U_i$ generate a Nisnevich covering, so $\h_S(U_i)$ is the colimit of the \v{C}ech nerve of $U_{i-1} \coprod V_i \to U_i$.
Moreover, every term of this simplicial diagram is separated, because $U_{i-1} \fibprod_{U_i} V_i = W_i$ is separated (being an open subspace in $V_i$).
By induction we may therefore assume that $X$ is separated and good.
For this case, choose again a scallop decomposition as above.
Since each $U_i$ is now separated and good, the cartesian squares
  \begin{equation*}
    \begin{tikzcd}
      W_i = U_{i-1} \fibprod_{U_i} V_i \ar{r}\ar{d}
        & U_{i-1} \times V_i \ar{d}
      \\
      U_i \ar{r}{\Delta}
        & U_i\times U_i
    \end{tikzcd}
  \end{equation*}
show that the $W_i$ are \emph{affine}.
The \v{C}ech nerve of $U_{i-1} \coprod V_i \to U_i$ is therefore a simplicial diagram with all terms affine and good, so we conclude by induction.
\end{proof}

\begin{prop}\label{prop:Nis sheaves on Sm^sch}
Let $S$ be a spectral algebraic space.
Denote by $\Sm^{\mrm{sch}}_{/S}$, resp. $\Sm^{\aff}_{/S}$, the full subcategory of $\Sm_{/S}$ spanned by smooth spectral \emph{schemes}, resp. \emph{affine} smooth spectral schemes, over $S$.

\noindent{\em(i)}
If $S$ is a spectral scheme, then restriction along the inclusion $\Sm^{\mrm{sch}}_{/S} \hook \Sm_{/S}$ induces an equivalence on \inftyCats of Nisnevich sheaves.
In particular, every Nisnevich sheaf on $\Sm_{/S}$ is a right Kan extension of its restriction to $\Sm^{\mrm{sch}}_{/S}$.

\noindent{\em(ii)}
If $S$ is affine, then restriction along the inclusion $\Sm^{\aff}_{/S} \hook \Sm_{/S}$ induces an equivalence on \inftyCats of Nisnevich sheaves.
In particular, every Nisnevich sheaf on $\Sm_{/S}$ is a right Kan extension of its restriction to $\Sm^{\aff}_{/S}$.
\end{prop}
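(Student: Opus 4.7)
The plan is to reduce both statements to the same comparison principle for sites: given a fully faithful inclusion $i : \cC_0 \hook \cC$ of small \inftyCats equipped with Grothendieck topologies (with the topology on $\cC_0$ induced from that on $\cC$), if every object of $\cC$ admits a covering sieve generated by objects of $\cC_0$, then restriction $i^*$ is an equivalence of \inftyCats of sheaves of spaces, with inverse given by right Kan extension. This may be verified directly via the adjunction $i^* \dashv i_*$: the counit $i^* i_* \to \id$ is always invertible because $i$ is fully faithful (each $c_0 \in \cC_0$ is terminal in $(\cC_0)_{/c_0}$), so the entire content is to show that, for a Nisnevich sheaf $\sF$ on $\cC$, the unit
\[ \sF(X) \to (i_* i^* \sF)(X) = \lim_{(Y \to X),\; Y \in \cC_0} \sF(Y) \]
is invertible. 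The ``right Kan extension'' clause of the statement then drops out automatically.

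The geometric input is that every $X \in \Sm_{/S}$ admits a Nisnevich cover by objects of $\Sm^{\aff}_{/S}$ (resp.\ $\Sm^{\mrm{sch}}_{/S}$) whose Čech nerve remains in that subcategory. Indeed, every qcqs spectral algebraic space admits a finite Nisnevich cover by affine spectral schemes (\cite[Ex.~3.7.1.5]{SAG-20180204}), and if $X$ is separated over an affine base $S$, then fiber products over $X$ of affines are affine (being closed subspaces of fiber products over $S$). The non-separated case is handled by a scallop-decomposition argument exactly as in the proof of \propref{prop:Spc_Nis generated by affines}: an iterated application of Nisnevich descent expresses $\sF(X)$ as a totalization of values on separated pieces, and one iterates until every term of the Čech nerve is affine. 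For case (i), the same procedure applies with ``scheme'' replacing ``affine'' throughout, and is even easier since the category of spectral schemes is stable under fiber products over a spectral scheme.

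Concretely, for any Nisnevich sheaf $\sF$ on $\Sm_{/S}$ and any $X \in \Sm_{/S}$, this yields a cofinal simplicial diagram into $(\Sm^?_{/S})_{/X}$ (with $? \in \{\aff,\mrm{sch}\}$) along which Nisnevich descent produces the required isomorphism $\sF(X) \isoto \lim \sF$, so $\sF \isoto i_* i^* \sF$. I expect the main obstacle to be the non-separated case in part (ii): without the scallop decomposition, fiber products of affines over a non-separated $X$ need not be affine, and the argument collapses. Once this geometric step is in place, everything else is formal, and the identification of the inverse functor with right Kan extension is immediate from the form of $i_*$.
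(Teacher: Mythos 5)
Your framing via the comparison lemma is the right idea, and the geometric input you identify (affine Nisnevich covers with affine Čech nerves, handled via scallop decompositions as in \propref{prop:Spc_Nis generated by affines}) is genuinely relevant. But the argument you give for the key step — that the unit $\sF(X) \to (i_* i^* \sF)(X)$ is invertible — does not work as written, and the gap is exactly where the real content lies.

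The problem is the cofinality claim. You assert that the Čech nerve of an affine cover $\tilde X \to X$ gives a cofinal (really: initial, since we are computing a limit) functor $\bDelta \to \bigl((\Sm^?_{/S})_{/X}\bigr)^\op$, so that $\lim_{(Y \to X),\, Y \in \cC_0} \sF(Y)$ is computed by $\Tot\bigl(\sF(\Cech(\tilde X/X)_\bullet)\bigr)$. This is false. Initiality requires that for every $(Y \to X)$ in the slice, the category of pairs $\bigl([n],\, Y \to \Cech(\tilde X/X)_n \text{ over } X\bigr)$ be weakly contractible; but an arbitrary affine $Y \to X$ need not factor through $\tilde X$ at all (e.g.\ $Y = X$ itself when $X$ is affine and the cover is not split), so this category can be empty. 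What Nisnevich descent gives you is only that the composite $\sF(X) \to (i_* i^* \sF)(X) \to \Tot\bigl(\sF(\Cech_\bullet)\bigr)$ is an isomorphism, i.e.\ that the unit is \emph{split}; it does not give surjectivity of the unit on its own.

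What is actually needed, and what your write-up silently assumes, is that the right Kan extension $i_*$ preserves Nisnevich sheaves — equivalently, that the inclusion $\iota$ is topologically cocontinuous in the sense of condition (COC). (Without this, $i_*$ does not even restrict to a functor between the sheaf $\infty$-categories, and the adjunction $i^* \dashv i_*$ you invoke at the sheaf level is not yet defined.) Cocontinuity is precisely where your geometric observation should be deployed: every Nisnevich covering sieve of an object of $\Sm^\aff_{/S}$ (resp.\ $\Sm^{\mrm{sch}}_{/S}$) can be refined by one generated by affines (resp.\ schemes). Once one knows that $i_* i^* \sF$ is again a Nisnevich sheaf, the unit is a map of Nisnevich sheaves which is an isomorphism on objects of $\cC_0$ (by full faithfulness of $\iota$), and one concludes by Nisnevich separation together with \propref{prop:Spc_Nis generated by affines} — no cofinality of the Čech nerve is required. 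The paper sidesteps even this by working with the triple adjunction $\LNis\iota_! \dashv \iota^* \dashv \iota_*$: full faithfulness of $\iota_*$ forces full faithfulness of $\LNis\iota_!$, whose essential image contains all the affine representables and hence everything, which is perhaps the most economical route once \propref{prop:Spc_Nis generated by affines} is available.
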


\begin{proof}
For claim (i), let $\iota$ denote the inclusion functor and $\iota^*$ the functor of restriction of presheaves along $\iota$.
This admits fully faithful left and right adjoints $\iota_!$ and $\iota_*$, given respectively by left and right Kan extension.
Since $\iota$ is topologically continuous (preserves Nisnevich covering families), the functor $\iota^*$ preserves Nisnevich sheaves.
It is also topologically cocontinuous (see before \defref{def:quasi-cocontinuous functor}), so $\iota_*$ preserves Nisnevich sheaves.
Therefore at the level of Nisnevich sheaves the functor $\iota^*$ is left adjoint to $\iota_*$ and right adjoint to $\LNis \iota_!$.
It follows formally that $\LNis \iota_!$ is also fully faithful.
Since its essential image is generated under colimits by objects of the form $\LNis \iota_!(\h_S(X)) \simeq \LNis \h_S(X) \simeq \h_S(X)$, for $X \in \Sm^{\mrm{sch}}_{/S}$, it follows from \propref{prop:Spc_Nis generated by affines} that it is essentially surjective.
The proof of claim (ii) is the same.
\end{proof}

\ssec{\texorpdfstring{$\A^1$}{A1}-homotopy invariance}

\begin{defn}
Let $S$ be a spectral algebraic space and $\sF \in \Spc(S)$ a fibred space over $S$.
We say that $\sF \in \Spc(S)$ satisfies \emph{$\A^1$-homotopy invariance} if for every $X \in \Sm_{/S}$, the canonical map of spaces
  \begin{equation*}
    p^* : \Gamma(X, \sF) \to \Gamma(\A^1_X, \sF)
  \end{equation*}
is invertible, where $p : \A^1_X \to X$ is the projection of the spectral affine line over $X$ (\examref{exam:vector bundles}).
Let $\Spc_\htp(S)$ denote the full subcategory of $\Spc(S)$ spanned by $\A^1$-homotopy invariant fibred spaces.
\end{defn}

\begin{rem}\label{rem:htp colimits}
Note that the full subcategory $\Spc_\htp(S) \subset \Spc(S)$ is stable under small colimits and limits.
\end{rem}

\begin{defn}
Note that $\Spc_\htp(S)$ can be described as the (accessible) left localization of $\Spc(S)$ at the set of canonical projections $\A^1_X \to X$ for $X \in \Sm_{/S}$.
Since this set is essentially small, there is a localization functor $\sF \mapsto \Lhtp(\sF)$.
We say that a morphism in $\Spc(S)$ is an \emph{$\A^1$-local equivalence} if it induces an isomorphism in $\Spc_\htp(S)$ after $\A^1$-localization.
\end{defn}

\begin{exam}
The $\Sm$-fibred space $X \mapsto \Omega^\infty \K(X)$ of \examref{exam:K-theory} is rarely $\A^1$-homotopy invariant, and its $\A^1$-localization no longer satisfies Nisnevich descent.
There is however a variant of K-theory, studied in \cite{CisinskiKhanKH}, which is both $\A^1$-invariant and Nisnevich excisive.
\end{exam}

\begin{rem}\label{rem:Lhtp}
The fact that $\A^1$-projections are stable under base change implies that the $\A^1$-localization functor commutes with finite products, and in fact admits the following description: for any fibred space $\sF \in \Spc(S)$, the space of sections over any $X \in \Sm_{/S}$ is computed by a sifted colimit:
  \begin{equation} \label{eq:formula for Lhtp}
    \Gamma(X, \Lhtp(\sF)) \simeq \colim_{(Y \to X) \in (\bA_X)^\op} \Gamma(Y, \sF),
  \end{equation}
where $\bA_X$ is the full subcategory of $\Sm_{/X}$ spanned by composites of $\A^1$-projections.
Moreover, the \inftyCat $\Spc_{\htp}(S)$ has universality of colimits.
See \cite[Prop.~3.4]{HoyoisEquivariant}.
\end{rem}

\begin{defn}
Let $\bS$ denote the sphere spectrum and $\bS\{T\}$ the free $\Einfty$-algebra on one generator $T$ (in degree zero).
The two morphisms $\bS\{T\} \to \bS$ sending $T$ to $0$ and $1$, respectively, induce, for any $X \in \Sm_{/S}$, two sections $i_0$ and $i_1$ of the projection $p : \A^1_X \to X$.
Given two morphisms $\varphi_0,\varphi_1 : \sF \rightrightarrows \sG$ in $\Spc(S)$, an \emph{elementary $\A^1$-homotopy} from $\varphi_0$ to $\varphi_1$ is a morphism $\h_S(\A^1_S) \times \sF \to \sG$ whose restrictions to $\h_S(S) \times \sF = \sF$ along $i_0$ and $i_1$ are isomorphic to $\varphi_0$ and $\varphi_1$, respectively.
We say that $\varphi_0$ and $\varphi_1$ are \emph{$\A^1$-homotopic} if there exists a sequence of elementary $\A^1$-homotopies connecting them; in this case the induced morphisms $\Lhtp(\sF) \rightrightarrows \Lhtp(\sG)$ coincide.
A morphism $\varphi: \sF \to \sG$ in $\Spc(S)$ is called a \emph{strict $\A^1$-homotopy equivalence} if there exists a morphism $\psi : \sG \to \sF$ such that the composites $\varphi \circ \psi$ and $\psi \circ \varphi$ are $\A^1$-homotopic to the identities.
Any strict $\A^1$-homotopy equivalence is an $\A^1$-local equivalence.
\end{defn}

\ssec{Motivic spaces}

\begin{defn}\label{defn:MotSpc}
A \emph{motivic space} over $S$ is a $\Sm$-fibred space $\sF \in \Spc(S)$ that is Nisnevich-local and $\A^1$-local.
We write $\MotSpc(S)$ for the full subcategory of $\Spc(S)$ spanned by motivic spaces.
This is an accessible left localization, and we write $\sF \mapsto \Lmot(\sF)$ for the localization functor.
We say that a morphism in $\Spc(S)$ is a \emph{motivic equivalence} if it induces an isomorphism in $\MotSpc(S)$ after application of $\Lmot$.
We write $\hmot[S]{X} := \Lmot \h_S(X)$ for the motivic space represented by an object $X \in \Sm_{/S}$.
\end{defn}

\begin{rem}\label{rem:MotSpc categorical}
The \inftyCat $\MotSpc(S)$ has universality of colimits (since $\Spc_\Nis(S)$ and $\Spc_\htp(S)$ do).
Similarly, the functor $\Lmot$ commutes with finite products (since $\LNis$ and $\Lhtp$ do).
By adjunction it follows that $\MotSpc(S)$ is cartesian closed: for any $\sF \in \MotSpc(S)$, $\sG \in \Spc(S)$, the internal hom object $\uHom(\sG, \sF) \in \Spc(S)$ is a motivic space.
\end{rem}

\begin{rem}\label{rem:Lmot}
Since the conditions of Nisnevich and $\A^1$-locality are each stable under filtered colimits (Remarks~\ref{rem:Nisnevich filtered colimits} and \ref{rem:htp colimits}), it follows that motivic localization can be described as the transfinite composite
  \begin{equation} \label{eq:description of Lmot}
    \Lmot(\sF) = \colim_{n \ge 0} (\Lhtp \circ \LNis)^{\circ n}(\sF)
  \end{equation}
for any $\sF \in \Spc(S)$.
\end{rem}

By \propref{prop:Spc_Nis generated by affines} we get:

\begin{prop} \label{prop:MotSpc generated by affines}
The \inftyCat $\MotSpc(S)$ is generated under sifted colimits by objects of the form $\hmot[S]{X}$, where $X\in\Sm_{/S}$ is affine and $X \to S$ factors through an étale morphism to a spectral affine space $\A^n_S$, for some $n\ge 0$.
\end{prop}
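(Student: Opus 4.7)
The plan is to deduce this formally from \propref{prop:Spc_Nis generated by affines} together with the fact that motivic localization is a left adjoint. First I would observe that the inclusion $\MotSpc(S) \hook \Spc_\Nis(S)$ is itself a left reflective embedding: by \remref{rem:Lmot}, the restriction of $\Lmot$ to $\Spc_\Nis(S)$ provides a left adjoint to this inclusion, so in particular it preserves all small colimits, and \emph{a fortiori} sifted ones.

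Next, I would take an arbitrary motivic space $\sF \in \MotSpc(S)$ and apply \propref{prop:Spc_Nis generated by affines} to $\sF$ viewed as a Nisnevich sheaf, obtaining a sifted colimit presentation $\sF \simeq \colim_i \h_S(X_i)$ in $\Spc_\Nis(S)$, where each $X_i$ is affine and admits an étale $S$-morphism to some $\A^n_S$. Applying $\Lmot$ to both sides, using that $\Lmot(\sF) \simeq \sF$ since $\sF$ is already motivic, and using that $\Lmot$ commutes with sifted colimits by the first step, I would obtain $\sF \simeq \colim_i \hmot[S]{X_i}$ in $\MotSpc(S)$, which is precisely the desired presentation.

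I do not anticipate any real obstacle: the argument is entirely formal, relying only on the universal property of the reflection $\Lmot\colon \Spc_\Nis(S) \to \MotSpc(S)$ and the generation result already established for Nisnevich sheaves. The one small subtlety worth flagging is that sifted colimits in $\MotSpc(S)$ differ in general from those in $\Spc_\Nis(S)$, but this is precisely handled by the left adjoint property of $\Lmot$, which ensures that applying it to a sifted colimit presentation in $\Spc_\Nis(S)$ produces one in $\MotSpc(S)$.
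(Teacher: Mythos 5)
Your argument is correct and is essentially the one the paper has in mind (the paper dispatches this with "By Proposition~\ref{prop:Spc_Nis generated by affines} we get:"): since $\Lmot$ is a left adjoint to the inclusion $\MotSpc(S) \hookrightarrow \Spc_\Nis(S)$, it preserves all small colimits and sends $\h_S(X)$ to $\hmot[S]{X}$, so the subcategory of $\MotSpc(S)$ generated under sifted colimits by the $\hmot[S]{X}$ contains $\Lmot(\sF) \simeq \sF$ for every motivic $\sF$. One small wording caution: "generated under sifted colimits" does not by itself give a single sifted colimit presentation $\sF \simeq \colim_i \h_S(X_i)$ (it allows iterated colimits), but your argument goes through unchanged at the level of the generated full subcategory, so this does not affect correctness.
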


\begin{cor}\label{cor:MotSpc on Sm^aff}
Let $S$ be a spectral algebraic space.
Consider the full subcategories $\Sm^{\mrm{sch}}_{/S}$ and $\Sm^{\aff}_{/S}$ of $\Sm_{/S}$ as defined in \propref{prop:Nis sheaves on Sm^sch}.

\noindent{\em(i)}
If $S$ is a spectral scheme, then restriction along the inclusion $\Sm^{\mrm{sch}}_{/S} \hook \Sm_{/S}$ induces an equivalence on \inftyCats of $\A^1$-homotopy invariant Nisnevich sheaves.

\noindent{\em(i)}
If $S$ is an affine spectral scheme, then restriction along the inclusion $\Sm^{\aff}_{/S} \hook \Sm_{/S}$ induces an equivalence on \inftyCats of $\A^1$-homotopy invariant Nisnevich sheaves.
\end{cor}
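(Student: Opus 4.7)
The plan is to deduce this corollary from Proposition~\ref{prop:Nis sheaves on Sm^sch} by showing that the equivalence of $\infty$-categories of Nisnevich sheaves recorded there restricts to the full subcategories of $\A^1$-invariant objects on either side. Write $\iota$ for the inclusion $\Sm^{\mrm{sch}}_{/S} \hook \Sm_{/S}$ in case (i), resp. $\Sm^{\aff}_{/S} \hook \Sm_{/S}$ in case (ii); Proposition~\ref{prop:Nis sheaves on Sm^sch} says that $\iota^*$ is an equivalence on Nisnevich sheaves. One inclusion is immediate: since $\A^1_X$ is a spectral scheme (resp. affine spectral scheme) whenever $X$ is, the $\A^1$-invariance condition on $\iota^*\sF$ is just the restriction of the $\A^1$-invariance condition on $\sF$ to the smaller subcategory, so $\iota^*$ sends $\A^1$-invariant sheaves to $\A^1$-invariant sheaves.

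The content is the converse: given a Nisnevich sheaf $\sF$ on $\Sm_{/S}$ such that $\iota^*\sF$ is $\A^1$-invariant, the plan is to verify that the map $\sF(X) \to \sF(\A^1_X)$ is invertible for every $X \in \Sm_{/S}$, and not merely for those $X$ lying in the smaller subcategory. For this, pick a Nisnevich covering $\tilde{X} \to X$ by an affine spectral scheme, available because $X$ is a (quasi-compact quasi-separated) spectral algebraic space. Base-changing along $\A^1_S \to S$ yields a Nisnevich covering $\A^1_{\tilde X} \to \A^1_X$. By Nisnevich descent, both $\sF(X) \to \sF(\A^1_X)$ and its restricted analogue identify with the totalization of the corresponding maps on the \v{C}ech nerves, so it suffices to know that each term of $\Cech(\tilde X/X)_\bullet$ lies in the subcategory under consideration.

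In case (i) this is painless: every term of the \v{C}ech nerve is étale over the spectral scheme $\tilde X$, hence a spectral scheme itself. In case (ii), the terms $\tilde X \times_X \tilde X$, etc., may fail to be affine in general (they are only schemes), so one must first reduce to $X$ separated, following the scallop-decomposition argument in the proof of Proposition~\ref{prop:Spc_Nis generated by affines}: choose a scallop decomposition $\varnothing = U_0 \hook \cdots \hook U_n = X$ with étale affines $V_i \to U_i$, and use induction via the Nisnevich squares $(U_{i-1}, V_i, W_i, U_i)$ together with Nisnevich descent of $\sF$; once $X$ is separated, the \v{C}ech nerve of an affine Nisnevich cover is termwise affine.

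The main obstacle is this separatedness reduction in part~(ii), which is what forces recourse to a scallop decomposition rather than a single-step descent argument; all the rest is formal from Proposition~\ref{prop:Nis sheaves on Sm^sch} together with the finite-limit description of Nisnevich descent in Theorem~\ref{thm:excision=descent}.
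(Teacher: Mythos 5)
Your proposal is correct, but takes a genuinely different route from the paper. The paper argues categorically: it repeats the proof of \propref{prop:Nis sheaves on Sm^sch}, verifying that $\iota^*$ and $\iota_*$ both preserve $\A^1$-invariance. The nontrivial half is that $\iota_*$ preserves $\A^1$-invariance, shown (by adjunction) to be equivalent to $\iota^*$ preserving $\A^1$-local equivalences; this is verified by universality of colimits, factoring any map $\h_S(Y) \to \iota^*\h_S(X)$ through the unit $\h_S(Y) \to \iota^*\iota_!\h_S(Y)$ to reduce to the tautological equivalence $\h_S(Y \times \A^1) \to \h_S(Y)$ with $Y$ already in the sub-site. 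This treats both cases (i) and (ii) uniformly. Your argument is instead geometric/descent-based: you verify directly that if $\iota^*\sF$ is $\A^1$-invariant then so is $\sF$, by expressing $\sF(X) \to \sF(\A^1_X)$ as a totalization over the \v{C}ech nerve of an affine Nisnevich cover, whose terms you arrange to lie in the sub-site. This is a legitimate alternative; the price is that case (ii) requires the extra separatedness reduction, whereas the paper's formal argument sees no difference between the two cases.

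One justification in your proof is off and should be corrected: for case (i) you assert that each term $\tilde X^{\times_X(n+1)}$ of the \v{C}ech nerve is a spectral scheme ``because it is \'etale over the spectral scheme $\tilde X$.'' An algebraic space that is \'etale over a scheme need not be a scheme in general. The correct reason is that the diagonal of a (qcqs) spectral algebraic space $X$ is representable by spectral schemes, so $\tilde X \times_X \tilde X$ is the base change of a schematic morphism along $\tilde X \times \tilde X \to X\times X$ and hence a (quasi-compact) spectral scheme, and similarly for the higher terms. Also, in case (ii) the logical order should be made explicit: the $W_i$ occurring in the scallop decomposition are separated schemes but not affine, so you must first establish the separated-scheme case (via termwise-affine \v{C}ech nerves) before the inductive scallop argument can be run; your phrasing suggests the opposite order.
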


\begin{proof}
Let $\iota$ denote either inclusion functor.
The claim will follow by repeating the proof of \propref{prop:Nis sheaves on Sm^sch} and using the following assertions:

($*$) The restriction functor $\iota^*$ preserves $\A^1$-invariant spaces, as does its right adjoint $\iota_*$.

The first claim follows immediately from the fact that $\iota$ preserves $\A^1$-projections (so that the left Kan extension functor $\iota_!$ preserves $\A^1$-local equivalences).
The second claim is equivalent by adjunction to the assertion that $\iota^*$ preserves $\A^1$-local equivalences.
For this it will suffice to show that, for any $X\in\Sm_{/S}$, the canonical morphism
  \begin{equation*}
    \iota^*\h_S(X \times \A^1) \to \iota^*\h_S(X)
  \end{equation*}
is an $\A^1$-local equivalence.
By universality of colimits it suffices to show that, for any $Y\in\Sm_{/S}$ and any morphism $\varphi : \h_S(Y) \to \iota^*\h_S(X)$, the base change
  \begin{equation*}
    \iota^*\h_S(X \times \A^1) \fibprod_{\iota^*\h_S(X)} \h_S(Y) \to \h_S(Y)
  \end{equation*}
is an $\A^1$-local equivalence.
Since $\varphi$ factors as $\h_S(Y) \to \iota^*\iota_!\h_S(Y) \simeq \iota^*\h_S(Y) \to \iota^*\h_S(X)$, the morphism in question is a base change of the morphism
  \begin{equation*}
    \iota^*\h_S(X \times \A^1) \fibprod_{\iota^*\h_S(X)} \iota^*\h_S(Y) \to \iota^*\h_S(Y).
  \end{equation*}
Since $\iota^*$ commutes with limits, this is identified with the canonical morphism $\h_S(Y \times \A^1) \to \h_S(Y)$.
This is obviously an $\A^1$-local equivalence, so the claim follows.
\end{proof}

Let $S$ be a classical scheme.
Then there is a parallel variant $\MotSpc_\cl(S)$ of the construction $\MotSpc(S)$ using the site of smooth \emph{classical} schemes, imposing Nisnevich descent and homotopy invariance with respect to the \emph{classical} affine line; see e.g. \cite[Appendix~C]{HoyoisLefschetz} (where it is denoted $\MotSpc(S)$).
This agrees with the original construction of Morel--Voevodsky \cite{MorelVoevodsky} when the latter is defined (that is, when $S$ is noetherian and of finite dimension).
If we assume that $S$ is of characteristic zero, then it also agrees with the spectral version $\MotSpc(S)$:

\begin{prop}\label{prop:char 0}
Let $S$ be a classical scheme of characteristic zero.
Then there is a canonical equivalence of \inftyCats
  \begin{equation*}
    \MotSpc(S) \simeq \MotSpc_\cl(S).
  \end{equation*}
\end{prop}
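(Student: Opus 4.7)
The plan is to show that restriction along the fully faithful inclusion $j : \Sm^{\cl}_{/S} \hook \Sm_{/S}$ of smooth classical $S$-schemes into smooth spectral algebraic spaces over $S$ descends to the desired equivalence $\MotSpc(S) \simeq \MotSpc_{\cl}(S)$.

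The geometric content of the proof lies in a single consequence of the characteristic-zero hypothesis: the two candidate affine lines collapse into one. Indeed, since $S$ is a classical $\bbQ$-scheme, the warning following \examref{exam:vector bundles} shows that $\A^n_S$ is flat over $S$ and therefore classical; the same applies to $\A^1_X$ for any classical smooth $X \to S$, since in characteristic zero the free $\Einfty$-algebra $\bS\{T\}$ becomes the classical polynomial algebra. Combined with \remref{rem:smooth structure}, it follows that every smooth spectral algebraic space over $S$ is Nisnevich-locally classical. In particular, the generating objects singled out in \propref{prop:MotSpc generated by affines}---affine smooth $X$ admitting an étale factorization through some $\A^n_S$---all lie in $\Sm^{\cl}_{/S}$.

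From here I would mimic the proof of \corref{cor:MotSpc on Sm^aff} verbatim. The inclusion $j$ is topologically continuous and cocontinuous for the Nisnevich topology (since Nisnevich squares in $\Sm^{\cl}_{/S}$ agree with those spectral Nisnevich squares whose vertices happen to be classical), and, by the observation above, preserves the affine-line projections. By the formal adjunction argument spelled out in \propref{prop:Nis sheaves on Sm^sch} and extended in \corref{cor:MotSpc on Sm^aff}, this ensures that $j^*$ descends to a functor $\MotSpc(S) \to \MotSpc_{\cl}(S)$ whose left adjoint $\Lmot \circ j_!$ is fully faithful. The essential image of $\Lmot \circ j_!$ is generated under sifted colimits by the motivic spaces $\hmot[S]{X}$ for $X \in \Sm^{\cl}_{/S}$; combined with the generating observation of the previous paragraph and \propref{prop:MotSpc generated by affines}, this yields essential surjectivity, hence the desired equivalence.

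The only real obstacle is verifying that spectral and classical $\A^1$-homotopy invariance agree in characteristic zero, but this is encoded directly in the flatness assertion from the warning following \examref{exam:vector bundles}. Once this is in hand, the remainder is a formal adjunction argument already rehearsed in the excerpt.
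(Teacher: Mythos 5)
Your proposal follows the same conceptual route as the paper: the geometric input is identical (the flatness of $\A^n_S$ in characteristic zero from the warning after \examref{exam:vector bundles}, combined with \remref{rem:smooth structure}, shows the generators of \propref{prop:MotSpc generated by affines} are classical), and the formal adjunction machinery is the one from \propref{prop:Nis sheaves on Sm^sch} and \corref{cor:MotSpc on Sm^aff}. The difference is that the paper dispatches the site comparison by citing \cite{CisinskiKhanKH}, while you try to unwind it.

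In doing so you leave one direction implicit, and it is the one that the citation is really carrying. For the functor $j : \Sm^{\cl}_{/S} \hookrightarrow \Sm_{/S}$ to exist (let alone be fully faithful and send classical Nisnevich covers to Nisnevich covers), you need to know that a classically smooth morphism of $\bbQ$-schemes is smooth in the spectral sense of this paper, i.e.\ that its topological ($\Einfty$) cotangent complex coincides with $\Omega^1$ and that classical finite presentation agrees with spectral finite presentation. This is true, but it is a consequence of the equivalence between connective $\Einfty$-$\bbQ$-algebras and simplicial commutative $\bbQ$-algebras, not of the flatness of $\A^1_S$; the warning you invoke handles only the comparison of the two affine lines, not the comparison of the two notions of smoothness. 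You also tacitly use that fibre products appearing in Nisnevich squares of classical smooth $S$-schemes are computed the same way spectrally and classically, which follows from flatness of \'etale morphisms and open immersions and is worth a sentence. Once these points are granted (or the same reference is cited), the argument is sound and essentially reproduces the paper's proof with more of the scaffolding shown.
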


\begin{proof}
This follows from \corref{cor:MotSpc on Sm^aff} and the fact that, if $S$ is of characteristic zero, there is a canonical equivalence between $\Sm^{\mrm{sch}}_{/S}$ and the category of smooth classical $S$-schemes, which preserves and detects Nisnevich covers, and sends the spectral affine line to the classical one.
\end{proof}

\begin{warn}\label{warn:char p}
The characteristic zero hypothesis in \propref{prop:char 0} is necessary in the proof: for example, if $S = \Spec(\bF_p)$, then the site $\Sm^{\mrm{sch}}_{/S}$ is not equivalent to the site of smooth classical $S$-schemes.
Indeed the classical affine line $\Spec(\bF_p[T])$ is not smooth over $S$ when viewed as a spectral scheme.
On the other hand, the site $\Sm^{\mrm{sch}}_{/S}$ contains objects like the spectral affine line $\A^1_S = \Spec(\bF_p\{T\})$, which is not a classical scheme.
See \cite[Prop.~2.4.1.5]{HAG2}.
\end{warn}

We conclude this subsection by introducing the pointed variant of $\MotSpc(S)$:

\begin{constr}\label{constr:pointed MotSpc}
Given a spectral algebraic space $S$, write $\MotSpc(S)_\bullet$ for the \inftyCat of pointed objects in $\MotSpc(S)$.
The forgetful functor $\MotSpc(S)_\bullet\to\MotSpc(S)$ admits a left adjoint $\sF \mapsto \sF_+$ which freely adjoins a base point.
The \inftyCat $\MotSpc(S)_\bullet$ is equivalent to the full subcategory of the \inftyCat $\Spc(S)_\bullet$ of fibred pointed spaces $\sF$ whose underlying fibred space is motivic.
It is an accessible left localization of $\Spc(S)_\bullet$ and the localization functor satisfies $\Lmot(\sF_+) \simeq \Lmot(\sF)_+$ for any $\sF \in \MotSpc(S)$. 
The cartesian monoidal structure on $\MotSpc(S)$ extends uniquely to a symmetric monoidal structure on $\MotSpc(S)_\bullet$ with the property that the functor $\sF \mapsto \sF_+$ is symmetric monoidal \cite[Cor.~2.32]{RobaloNoncommutative}.
We write $\wedge$ for the monoidal product; the monoidal unit is the object $(\pt_S)_+ \in \MotSpc(S)_\bullet$.
It follows from \propref{prop:MotSpc generated by affines} that $\MotSpc(S)_\bullet$ is generated under sifted colimits by objects of the form $\hmot[S]{X}_+$, for all affine $X \in \Sm_{/S}$ which admit an étale $S$-morphism to $\A^n_S$ for some $n\ge 0$.
\end{constr}

\ssec{Functoriality}

We now discuss the basic functorialities that the system of categories $\MotSpc(S)$ enjoys as $S$ varies.
For any morphism $f : T \to S$, we will define a pair of adjoint functors $(f^*_{\MotSpc}, f_*^{\MotSpc})$.
If $f$ happens to be smooth, there will be a further adjunction $(f_\sharp^{\MotSpc}, f^*_{\MotSpc})$.
When there is no risk of confusion we will usually omit the decorations $^\MotSpc$ and $_\MotSpc$.

\begin{constr}
Let $f : T \to S$ be a morphism of spectral algebraic spaces.
Restriction along the base change functor $\Sm_{/S} \to \Sm_{/T}$ defines a canonical functor
  \begin{equation*}
    f_*^{\Spc} : \Spc(T) \to \Spc(S).
  \end{equation*}
It admits a left adjoint $f^*_{\Spc}$ which is uniquely characterized by commutativity with small colimits and the formula $f^*_{\Spc}(\hspc[S]{X}) \simeq \hspc[T]{X \fibprod_S T}$ for $X \in \Sm_{/S}$.
\end{constr}

\begin{constr}
The base change functor $\Sm_{/S} \to \Sm_{/T}$ preserves Nisnevich covering families and $\A^1$-projections, so the functor $f^*_{\Spc}$ preserves motivic equivalences.
By adjunction its right adjoint $f_*^{\Spc}$ preserves motivic spaces and induces a functor (``direct image'')
  \begin{equation*}
    f_*^{\MotSpc} : \MotSpc(T) \to \MotSpc(S).
  \end{equation*}
This is right adjoint to the functor $f^*_{\MotSpc} = \Lmot f^*_{\Spc}$ (``inverse image''), characterized uniquely by commutativity with colimits and the formula $f^*_\MotSpc(\hmot[S]{X}) \simeq \hmot[T]{X \fibprod_S T}$ for $X \in \Sm_{/S}$.
\end{constr}

\begin{rem}
By \remref{rem:MotSpc categorical} it follows that $f^*_\MotSpc$ commutes with finite products.
\end{rem}

When the morphism $f$ is smooth, the inverse image functor $f^*_\MotSpc$ also admits a \emph{left} adjoint $f_\sharp$.
When $f$ is an open immersion, this is the functor of ``extension by zero''.
More generally, when $f$ is étale, it is the functor of ``compactly support direct image''.

\begin{constr}
Let $p : T \to S$ be a smooth morphism of spectral algebraic spaces.
Then the base change functor admits a right adjoint $\Sm_{/T} \to \Sm_{/S}$, the forgetful functor $(X \to T) \mapsto (X \to T \xrightarrow{p} S)$.
It follows that the functor $p^*_{\Spc}$ coincides with restriction along the forgetful functor, and admits a left adjoint
  \begin{equation*}
    p_\sharp^{\Spc} : \Spc(T) \to \Spc(S),
  \end{equation*}
which is uniquely characterized by commutativity with colimits and the formula $p_\sharp^{\Spc}(\hspc[T]{X}) \simeq \hspc[S]{X}$ for $X \in\Sm_{/T}$.
\end{constr}

\begin{constr}
Since the forgetful functor $\Sm_{/T} \to \Sm_{/S}$ preserves Nisnevich covering families and $\A^1$-projections, it follows that $p_\sharp^{\Spc}$ preserves motivic equivalences.
In particular its right adjoint $p^*_{\Spc}$ preserves motivic spaces, and induces a functor
  \begin{equation*}
    p^*_{\MotSpc} : \MotSpc(S) \to \MotSpc(T).
  \end{equation*}
This is right adjoint to the functor $p_\sharp^{\MotSpc}(\sF) \simeq \Lmot(p_\sharp^{\Spc}(\sF))$, characterized by commutativity with colimits and the formula $p_\sharp^\MotSpc(\hmot[T]{X}) \simeq \hmot[S]{X}$ for $X \in \Sm_{/T}$.
\end{constr}

\begin{rem}
Note that $p_\sharp^\MotSpc$ commutes with binary products if and only if $p$ is a monomorphism, hence if and only if $p$ is an open immersion.
\end{rem}

\begin{prop}[Nisnevich separation] \label{prop:Nisnevich separation of MotSpc}
Let $(p_\alpha : S_\alpha \to S)_\alpha$ be a Nisnevich covering family of spectral algebraic spaces.
Then the functors $(p_\alpha)^* : \MotSpc(S) \to \MotSpc(S_\alpha)$ form a conservative family.
\end{prop}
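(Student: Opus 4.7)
The plan is to reduce to the case where $\{p_\alpha\}_\alpha$ is the two-element covering family arising from a single Nisnevich square, and then invoke Nisnevich excision.

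For the reduction, I would use the following well-known consequence of \constrref{constr:Nisnevich topology}: any Nisnevich cover of $S$ admits a refinement by a finite scallop-style tower $\initial = T_0 \hook T_1 \hook \cdots \hook T_n = S$ of quasi-compact open subspaces such that each $T_{i-1} \hook T_i$ fits into a Nisnevich square $\{T_{i-1} \hook T_i, V_i \to T_i\}$ with $V_i \to T_i$ factoring through some $p_{\alpha_i}$; this is exactly the decomposition technique already used in the proof of \propref{prop:Spc_Nis generated by affines}. By induction on $n$, it then suffices to handle the case where $\{p_\alpha\}_\alpha$ is the cover arising from a single Nisnevich square $\{j : U \hook S,\ p : V \to S\}$: at the top of the tower, one reduces via the Nisnevich square to pullbacks on $T_{n-1}$ (handled by the inductive hypothesis) and on $V_n$ (where conservativity is inherited from $p_{\alpha_n}$ via the factorization).

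So suppose $\varphi : \sF \to \sG$ in $\MotSpc(S)$ has $j^*\varphi$ and $p^*\varphi$ invertible. Since motivic spaces are in particular Nisnevich sheaves, it suffices to prove that $\Gamma(X, \varphi) : \Gamma(X, \sF) \to \Gamma(X, \sG)$ is an equivalence for every $X \in \Sm_{/S}$. Pulling the Nisnevich square back along $X \to S$ yields a Nisnevich square over $X$ with vertices $X_U := X \fibprod_S U$, $X_V := X \fibprod_S V$, and $X_W := X_U \fibprod_X X_V$. By Nisnevich excision (\thmref{thm:excision=descent}) the squares of sections on $\sF$ and on $\sG$ are both cartesian, so the problem reduces to showing that $\varphi$ acts as an equivalence on sections over each of $X_U$, $X_V$, and $X_W$.

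The last step is immediate: because $j$ and $p$ are smooth, the inverse image functors $j^*$ and $p^*$ on presheaves act by restriction along the forgetful functors $\Sm_{/U} \to \Sm_{/S}$ and $\Sm_{/V} \to \Sm_{/S}$, so $\Gamma(Y, j^*\sF) \simeq \Gamma(Y, \sF)$ for any $Y \in \Sm_{/U}$, and similarly for $p$. Viewing $X_U, X_W \in \Sm_{/U}$ and $X_V \in \Sm_{/V}$, the invertibility of $j^*\varphi$ and $p^*\varphi$ produces the three required sectionwise equivalences, completing the argument. The main obstacle I expect is the scallop-style refinement of an arbitrary Nisnevich cover to a single Nisnevich square in the first step; once it is in place, the subsequent descent computation is routine.
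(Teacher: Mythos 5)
Your proof is correct, but takes a genuinely longer route than the paper. The paper's proof is a one-liner: since $\sF_1,\sF_2$ are Nisnevich sheaves, for each $X \in \Sm_{/S}$ one passes directly to the \v{C}ech nerve of the pulled-back covering family $\{S_\alpha \fibprod_S X \to X\}_\alpha$, each simplicial term of which lives in some $\Sm_{/S_\alpha}$; the sheaf condition (i.e.\ Nisnevich descent for general covers, Definition~\ref{defn:Nisnevich descent} together with \constrref{constr:Nisnevich topology}) then expresses $\Gamma(X,\sF_i)$ as a totalization of spaces on which the assumed equivalences $(p_\alpha)^*(\varphi)$ apply, via $\Gamma(Y, (p_\alpha)^*\sF_i) \simeq \Gamma(Y, \sF_i)$ for $Y \in \Sm_{/S_\alpha}$. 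You instead first reduce to a single Nisnevich square and then invoke Nisnevich excision (\thmref{thm:excision=descent}); this is fine, and Step 2 of your argument is the correct elementary excision calculation, but the first reduction is an extra ingredient the paper does not need, and it is where the nontrivial work lies: you must know that any Nisnevich cover of a qcqs spectral algebraic space can be refined by a scallop tower whose \'etale pieces factor through the given cover, plus the inductive bookkeeping for restricting the tower and the cover to each $T_{n-1}$. That claim is true in this setting (it is the same device as in the proof of \propref{prop:Spc_Nis generated by affines}), but you are introducing and leaning on it precisely because you are using excision (squares) where the paper uses descent (arbitrary covers). The trade is unfavourable here: the paper's approach buys simplicity and no dependence on scallop decompositions, whereas yours buys nothing extra since the statement is already about an arbitrary cover.
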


\begin{proof}
Let $\varphi : \sF_1 \to \sF_2$ be a morphism in $\MotSpc(S)$ and assume that $(p_\alpha)^*(\varphi)$ is invertible for each $\alpha$.
It suffices to show that the map $\Gamma(X, \sF_1) \to \Gamma(X, \sF_2)$ is invertible for all $X \in \Sm_{/S}$.
Passing to the \v{C}ech nerve of the covering family, we may assume that $X \in \Sm_{/S_\alpha}$ for some $\alpha$.
Then the claim follows from the assumption because we have by adjunction natural isomorphisms $\Gamma(X_\alpha, \sF_i) \simeq \Gamma(X, (p_\alpha)_\sharp (p_\alpha)^* \sF_i)$ for each $i$.
\end{proof}

\begin{constr}
Suppose we have a cartesian square
  \begin{equation*}
    \begin{tikzcd}
      T' \arrow{r}{f'}\arrow{d}{p'}
        & S' \arrow{d}{p}
      \\
      T \arrow{r}{f}
        & S
    \end{tikzcd}
  \end{equation*}
of spectral algebraic spaces.
If $p$ is smooth, then the square
  \begin{equation*}
    \begin{tikzcd}
      \MotSpc(S')\ar{r}{(f')^*}\ar{d}{p_\sharp}
        & \MotSpc(T')\ar{d}{(p')_\sharp}
      \\
      \MotSpc(S)\ar{r}{f^*}
        & \MotSpc(T)
    \end{tikzcd}
  \end{equation*}
commutes up to the canonical natural transformation
  \begin{equation*}
    (p')_\sharp (f')^* \xrightarrow{\mrm{unit}} (p')_\sharp (f')^* p^* p_\sharp \simeq (p')_\sharp (p')^* f^* p_\sharp \xrightarrow{\mrm{counit}} f^* p_\sharp.
  \end{equation*}
\end{constr}

\begin{prop}[Smooth base change formula] \label{prop:smooth base change}
Suppose given a cartesian square of spectral algebraic spaces as above, with $p$ smooth.
Then there are canonical invertible natural transformations
  \begin{align*}
    (p')_\sharp^{\MotSpc} (f')^*_{\MotSpc} \to f^*_{\MotSpc} p_\sharp^{\MotSpc},
    \\
    p^*_{\MotSpc} f_*^{\MotSpc} \to (f')_*^{\MotSpc} (p')^*_{\MotSpc},
  \end{align*}
of functors $\MotSpc(S') \to \MotSpc(T)$ and $\MotSpc(T) \to \MotSpc(S')$, respectively.
\end{prop}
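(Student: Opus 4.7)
The plan is to prove the first (left adjoint) natural transformation is invertible; the second then follows formally by passage to right adjoints, since the two natural transformations are mates with respect to the adjunctions $(p_\sharp, p^*)$, $((p')_\sharp, (p')^*)$, $(f^*, f_*)$, $((f')^*, (f')_*)$. In more detail, given the square of left adjoints $((p')_\sharp (f')^*$ versus $f^* p_\sharp)$, the Beck--Chevalley transformation is invertible if and only if the analogous transformation between the right adjoints $p^* f_*$ and $(f')_* (p')^*$ is invertible. So one only needs to handle the $_\sharp$-version.

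For this, note that all four functors $(p')_\sharp$, $(f')^*$, $f^*$, $p_\sharp$ are left adjoints in the construction above, hence commute with colimits. By \propref{prop:MotSpc generated by affines}, the \inftyCat $\MotSpc(S')$ is generated under (sifted) colimits by objects of the form $\hmot[S']{X}$ for $X \in \Sm_{/S'}$, so it suffices to exhibit an invertible natural transformation on these generators. On such an $X$, the defining formulas for $f^*$, $p_\sharp$, $(f')^*$, $(p')_\sharp$ give
\begin{equation*}
  f^*_\MotSpc p_\sharp^\MotSpc \hmot[S']{X} \simeq f^*_\MotSpc \hmot[S]{X} \simeq \hmot[T]{X \fibprod_S T},
\end{equation*}
and
\begin{equation*}
  (p')_\sharp^\MotSpc (f')^*_\MotSpc \hmot[S']{X} \simeq (p')_\sharp^\MotSpc \hmot[T']{X \fibprod_{S'} T'} \simeq \hmot[T]{X \fibprod_{S'} T'}.
\end{equation*}
Since the square $T' = S' \fibprod_S T$ is cartesian in spectral algebraic spaces, the canonical comparison $X \fibprod_{S'} T' \to X \fibprod_S T$ is an isomorphism, providing the desired identification.

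The only remaining verification is that this isomorphism coincides (up to canonical equivalence) with the Beck--Chevalley transformation written down in the preceding construction; this is a formal check using the universal properties of the various adjunctions, together with the fact that the corresponding statement already holds at the level of $\Sm$-fibred spaces (before motivic localization) because the base change functors on smooth sites compose strictly compatibly with the cartesian square. The main (but minor) obstacle is just bookkeeping: one should verify once and for all that the Beck--Chevalley transformation between the left adjoints $f^*_\Spc p_\sharp^\Spc$ and $(p')_\sharp^\Spc (f')^*_\Spc$ on presheaf \inftyCats is already invertible (which, for representables, is exactly the cartesian square condition), and then observe that applying $\Lmot$ commutes with this natural transformation since $\Lmot$ commutes with all of the left adjoints in play.
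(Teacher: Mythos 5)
Your proof takes essentially the same route as the paper: pass to right adjoints for the second transformation, observe all four functors in the first commute with colimits, reduce via \propref{prop:MotSpc generated by affines} to representables $\hmot[S']{X}$, and identify both sides with $\hmot[T]{X \fibprod_S T}$ via the cartesian square. Your final paragraph spells out what the paper dismisses as ``obvious'' — that the abstract Beck--Chevalley transformation coincides with this concrete identification on representables, which is indeed cleanest to check at the presheaf level before applying $\Lmot$ — and the mate-invertibility argument for the right-adjoint version is correctly stated.
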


\begin{proof}
The second transformation is obtained from the first by passage to right adjoints.
For the first, note that each of the functors involved commutes with colimits.
Therefore by \propref{prop:MotSpc generated by affines} it suffices to show that the transformation induces an isomorphism after evaluation at any object object $\hmot[S']{X'}$, with $X' \in \Sm_{/S'}$, which is obvious.
\end{proof}

\begin{cor} \label{cor:j_sharp and j_* fully faithful}
Let $j : U \hook S$ be an open immersion of spectral algebraic spaces.
Then the functors $j_\sharp^{\MotSpc}$ and $j_*^{\MotSpc}$ are fully faithful.
\end{cor}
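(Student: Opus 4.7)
The plan is to deduce both claims from the smooth base change formula (\propref{prop:smooth base change}) applied to the tautological cartesian square obtained by pulling back $j$ along itself. Since any open immersion is a monomorphism, the diagonal $U \to U \fibprod_S U$ is invertible, and the cartesian square
\[
\begin{tikzcd}
U \ar{r}{\id}\ar{d}{\id} & U \ar{d}{j} \\
U \ar{r}{j} & S
\end{tikzcd}
\]
has both projections equal to the identity. Since $j$ is étale, and in particular smooth, \propref{prop:smooth base change} applies and produces an invertible natural transformation $(p_2)_\sharp (p_1)^* \xrightarrow{\sim} j^* j_\sharp$.

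Next I would trace through the explicit construction of this base-change transformation (the unit--counit composite displayed just before \propref{prop:smooth base change}): with $p_1 = p_2 = \id$ the intermediate counit step is trivial, and the entire composite collapses to the unit of the adjunction $(j_\sharp, j^*)$. Invertibility of that unit is equivalent to full faithfulness of $j_\sharp$, settling the first claim.

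For $j_*$, I would pass to right adjoints in the isomorphism $\id \xrightarrow{\sim} j^* j_\sharp$: since the right adjoint of $j^* j_\sharp$ is $j^* j_*$ (using $(j^*)^R = j_*$ and $(j_\sharp)^R = j^*$), this yields an invertible $j^* j_* \xrightarrow{\sim} \id$, which under the passage to adjoints is identified with the counit of $(j^*, j_*)$. Equivalently, the second natural transformation of \propref{prop:smooth base change} can be applied directly to the same square. Either way, invertibility of this counit is equivalent to full faithfulness of $j_*$. The argument is entirely formal, and the only verification required is the identification of the base-change $2$-cell with the adjunction unit at this tautological square, which is immediate from the explicit construction; so I do not expect any real obstacle.
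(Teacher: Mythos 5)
Your proof is correct and follows essentially the same approach as the paper: both apply the smooth base change formula (\propref{prop:smooth base change}) to the tautological cartesian square obtained from the monomorphism $j$, identify the base change $2$-cell with the unit of $(j_\sharp, j^*)$ (resp.\ counit of $(j^*, j_*)$), and conclude full faithfulness from its invertibility.
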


\begin{proof}
Applying \propref{prop:smooth base change} to the square
  \[ \begin{tikzcd}
    U \arrow[equals]{r}\arrow[equals]{d}
      & U \arrow[hookrightarrow]{d}{j}
    \\
    U \arrow[hookrightarrow]{r}{j}
      & S,
  \end{tikzcd} \]
which is cartesian because $j$ is a monomorphism, we deduce that the natural transformations $\id \to j^*j_\sharp$ and $j^*j_* \to \id$ are invertible.
\end{proof}

\begin{cor} \label{cor:j^*i_* trivial}
Let $i : Z \hook S$ be a closed immersion of spectral algebraic spaces with quasi-compact open complement $j : U \hook S$.
Then the natural transformations
  \begin{align*}
    \initial_Z \to i^*_{\MotSpc}j_\sharp^{\MotSpc}, \\
    j^*_{\MotSpc}i_*^{\MotSpc} \to \pt_U,
  \end{align*}
are invertible.
\end{cor}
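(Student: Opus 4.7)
The plan is to deduce both statements directly from the smooth base change formula (\propref{prop:smooth base change}) applied to a single cartesian square, using the fact that $Z$ and $U$ are complementary in $S$.

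First, observe that since $j : U \hook S$ is the open complement of $i : Z \hook S$, the fiber product $Z \fibprod_S U$ is the empty spectral algebraic space $\initial$. Since $j$ is an open immersion, hence smooth, we may apply \propref{prop:smooth base change} to the cartesian square
\[
\begin{tikzcd}
\initial \ar{r}{i'}\ar{d}{j'} & U \ar[hook]{d}{j} \\
Z \ar[hook]{r}{i} & S.
\end{tikzcd}
\]
This yields canonical isomorphisms $i^*_\MotSpc \, j_\sharp^\MotSpc \simeq (j')_\sharp^\MotSpc \, (i')^*_\MotSpc$ and $j^*_\MotSpc \, i_*^\MotSpc \simeq (i')_*^\MotSpc \, (j')^*_\MotSpc$, of functors involving $\MotSpc(\initial)$.

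Next, I claim that $\MotSpc(\initial)$ is the terminal \inftyCat. Indeed, $\Sm_{/\initial}$ contains (up to isomorphism) only the empty scheme $\initial$, and any Nisnevich sheaf must be reduced (i.e.\ take contractible value on $\initial$); so every Nisnevich sheaf on $\Sm_{/\initial}$ is contractible, and in particular motivic. It follows that the functors $(j')_\sharp^\MotSpc : \MotSpc(\initial) \to \MotSpc(Z)$ and $(i')_*^\MotSpc : \MotSpc(\initial) \to \MotSpc(U)$, being left and right adjoints from the terminal \inftyCat, must be constant at the initial object $\initial_Z$ and the terminal object $\pt_U$, respectively.

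Combining these two observations gives $i^*_\MotSpc \, j_\sharp^\MotSpc \simeq \initial_Z$ (as a constant functor) and $j^*_\MotSpc \, i_*^\MotSpc \simeq \pt_U$ (likewise). Since $\initial_Z$ is initial and $\pt_U$ is terminal in their respective categories, the canonical natural transformations in the statement are necessarily the ones produced by these identifications, and are therefore invertible. There is no substantive obstacle: once smooth base change is in hand, the whole argument is a formal consequence of the degeneracy of $\MotSpc(\initial)$.
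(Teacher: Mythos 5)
Your proof is correct and follows exactly the same route as the paper, which simply cites \propref{prop:smooth base change} applied to the cartesian square $\initial = Z \fibprod_S U$ and leaves the rest implicit. You have just filled in the unstated details, namely that $\MotSpc(\initial)$ is the terminal $\infty$-category and that the resulting constant functors valued at $\initial_Z$ and $\pt_U$ force the canonical maps to be invertible.
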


\begin{proof}
Apply \propref{prop:smooth base change} to the cartesian square
  \[ \begin{tikzcd}
    \initial \arrow[hookrightarrow]{r}\arrow[hookrightarrow]{d}
      & Z \arrow[hookrightarrow]{d}{i}
    \\
    U \arrow[hookrightarrow]{r}{j}
      & S.
  \end{tikzcd} \]
\end{proof}

\begin{constr}
Let $p : S' \to S$ be a smooth morphism of spectral algebraic spaces.
Given motivic spaces $\sF' \in \MotSpc(S')$, $\sF \in \MotSpc(S)$ and $\sG \in \MotSpc(S)$, we get a morphism
  \begin{equation*}
    \sF' \fibprod_{p^*_\MotSpc(\sG)} p^*_{\MotSpc}(\sF)
      \xrightarrow{\mrm{unit}} p^*_{\MotSpc} p_\sharp^{\MotSpc}(\sF') \fibprod_{p^*_\MotSpc(\sG)} p^*_{\MotSpc}(\sF)
      \simeq p^*_{\MotSpc}(p_\sharp^{\MotSpc}(\sF') \fibprod_{\sG} \sF),
  \end{equation*}
which corresponds by adjunction to a canonical morphism
  \begin{equation*}
    p_\sharp^{\MotSpc}(\sF' \fibprod_{p^*_{\MotSpc}(\sG)} p^*_{\MotSpc}(\sF)) \to p_\sharp^{\MotSpc}(\sF') \fibprod_{\sG} \sF.
  \end{equation*}
\end{constr}

\begin{prop}[Smooth projection formula]\label{prop:smooth projection formula}
Let $p : S' \to S$ be a smooth morphism of spectral algebraic spaces.
Given motivic spaces $\sF' \in \MotSpc(S')$, $\sF \in \MotSpc(S)$ and $\sG \in \MotSpc(S)$, we have canonical bifunctorial isomorphisms
  \begin{align*}
    p_\sharp^{\MotSpc}(\sF' \fibprod_{p^*_{\MotSpc}(\sG)} p^*_{\MotSpc}(\sF)) \isoto p_\sharp^{\MotSpc}(\sF') \fibprod_{\sG} \sF,\\
    p^*_{\MotSpc}\left(\uHom(\sF, \sG)\right) \isoto \uHom(p^*_{\MotSpc}(\sF), p^*_{\MotSpc}(\sG)),\\
    \uHom(p_\sharp^{\MotSpc}(\sF'), \sF) \isoto p_*^{\MotSpc}\uHom(\sF', p^*_{\MotSpc}(\sF)).
  \end{align*}
\end{prop}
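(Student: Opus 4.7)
My plan is to deduce all three formulas from a single master statement: the absolute case of the first formula, namely $p_\sharp(\sH \times p^*\sK) \simeq p_\sharp(\sH) \times \sK$. Formulas (2) and (3) will follow formally from this absolute master formula by a Yoneda argument: for any test object $\sL$, both sides of (2) (resp.\ (3)) are computed as $\Hom(\sL, \cdot)$ by successively applying the adjunctions $(p_\sharp^\MotSpc, p^*_\MotSpc)$ and $(p^*_\MotSpc, p_*^\MotSpc)$ together with cartesian closedness (Remark~2.17); the two rewritten expressions differ by exactly one move of a finite product past $p_\sharp$, which is supplied by the absolute formula. Hence (2) and (3) reduce to (1).

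To prove the first formula, I construct the canonical natural transformation as in the Construction preceding the statement and show it is invertible in three steps.

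First, as a functor of $\sF' \in \MotSpc(S')_{/p^*\sG}$ (with $\sG$ and $\sF \to \sG$ fixed), both sides preserve colimits: the left-hand side because the pullback $(-) \fibprod_{p^*\sG} p^*\sF$ preserves colimits by universality of colimits in $\MotSpc(S')$ (Remark~2.17), and $p_\sharp$ preserves colimits as a left adjoint; the right-hand side by an analogous argument. The same holds in the $\sF$ variable. Invoking \propref{prop:MotSpc generated by affines} in the relevant slice categories, I reduce to the case $\sF' = \hmot[S']{Y}$ and $\sF = \hmot[S]{X}$ with $Y \in \Sm_{/S'}$ and $X \in \Sm_{/S}$.

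Second, in the absolute case $\sG = \pt_S$ with representable inputs, the formula is a direct computation: $p^*\hmot[S]{X} \simeq \hmot[S']{X \fibprod_S S'}$ by construction of $p^*$; then since $\Lmot$ preserves finite products (Remark~2.17) and the Yoneda embedding preserves fiber products, $\hmot[S']{Y} \times \hmot[S']{X \fibprod_S S'} \simeq \hmot[S']{Y \fibprod_S X}$, to which $p_\sharp$ assigns $\hmot[S]{Y \fibprod_S X}$; this matches the right-hand side $\hmot[S]{Y} \times \hmot[S]{X} \simeq \hmot[S]{Y \fibprod_S X}$ computed the same way.

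Third, one must extend this to arbitrary $\sG$ in the representable case for $\sF', \sF$. This is where the main obstacle lies: the functor $\sG \mapsto \hmot[S]{Y} \fibprod_\sG \hmot[S]{X}$ does not preserve generic colimits in $\sG$, so one cannot naively reduce $\sG$ to a representable by the same colimit argument used in step one. My plan is to exploit compactness of $\hmot[S]{Y}$ and $\hmot[S]{X}$ in $\MotSpc(S)$ (a consequence of Remarks~2.11 and~2.21, which imply that $\LNis$ and $\Lhtp$ preserve filtered colimits, hence so does $\Lmot$): writing $\sG$ as a filtered colimit of finite colimits of representables and tracking how universality of colimits distributes the fiber products involving $\sG$ across this presentation, I reduce to $\sG = \hmot[S]{Z}$ representable. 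For representable $\sG$, the formula is an instance of smooth base change (\propref{prop:smooth base change}) applied to the cartesian square cut out by $Y \to Z$ and $X \to Z$, combined with the product computation from step two. The delicate point is the reduction to representable $\sG$; I expect this to be the technical heart of the argument, and anticipate that it may be recast more cleanly as a Beck--Chevalley compatibility between the projection formula and smooth base change, rather than carried out by explicit colimit manipulations.
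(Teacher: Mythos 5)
The paper states this proposition without proof, so I will review your argument on its own merits.

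Your deduction of formulas (2) and (3) from the ``absolute'' case $p_\sharp(\sH\times p^*\sK)\simeq p_\sharp(\sH)\times\sK$ via the adjunctions $p_\sharp\dashv p^*\dashv p_*$ and cartesian closedness is correct, and your proof of that absolute case---reducing to representable $\sH,\sK$ by colimit preservation and computing $\hmot[S']{Y}\times\hmot[S']{X\fibprod_S S'}\simeq\hmot[S']{Y\fibprod_S X}$---is also fine. But the full relative form of (1) does not follow from its absolute special case, and your third step, which is meant to bridge that gap, has two concrete problems.

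First, compactness of $\hmot[S]{Y}$ and $\hmot[S]{X}$ (and the fact that filtered colimits of motivic spaces are computed in $\Spc(S)$, so that finite limits commute with them) does let you commute the fiber product past a \emph{filtered} colimit presentation of $\sG$, reducing $\sG$ to an $\omega$-compact object, i.e.\ a retract of a finite colimit of representables. But pullbacks do not commute with \emph{finite} colimits in the cospan position, so this mechanism cannot get you down to $\sG$ representable. Second, even if $\sG=\hmot[S]{Z}$ were representable, the structure maps $\sF'\to p^*\sG$ and $\sF\to\sG$ are arbitrary maps of motivic spaces: $\Hom(\hmot[S]{X},\hmot[S]{Z})\simeq\Gamma(X,\hmot[S]{Z})$ is in general strictly larger than $\Hom_{\Sm_{/S}}(X,Z)$, so there is no cartesian square of spectral algebraic spaces to which smooth base change (\propref{prop:smooth base change}) could be applied.

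The relative formula is in fact more delicate than its absolute special case: the analogous statement already \emph{fails} with $\Lmot$ replaced by $\LNis$. Take $S=\Spec(k)$, $S'=\A^1_k$, $\sF'=\pt_{S'}$ with its canonical (unit) map to $p^*\sG$ for $\sG=\h_S(\A^1_k)$, and $\sF=\pt_S$ mapping to $\sG$ by the origin. Then $\sF'\fibprod_{p^*\sG}p^*\sF$ is the initial Nisnevich sheaf on $\Sm_{/S'}$ (a nonempty smooth $\A^1$-scheme cannot have constant structure map to $\A^1$), so the left-hand side is initial, while the right-hand side is $\pt_S\fibprod_\sG\sF\simeq\pt_S$. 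So the $\A^1$-localization is essential, and a correct proof of (1) must exploit specific properties of $\Lhtp$---in particular the stability of $\A^1$-local equivalences under base change along morphisms with $\A^1$-local target, which is in the scope of \remref{rem:Lhtp} and the reference cited there---rather than a purely formal reduction to representables capped by a smooth base change.
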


The functorialities under discussion extend freely to the setting of pointed motivic spaces:

\begin{constr}
Given a spectral algebraic space $S$, let $\MotSpc(S)_\bullet$ denote the \inftyCat of pointed motivic spaces over $S$ (\constrref{constr:pointed MotSpc}).
For any morphism $f : T \to S$, the direct image functor $f_*^{\MotSpc}$ preserves terminal objects and therefore induces a functor $f_*^{\MotSpc_\bullet}$ (which commutes with passage to underlying motivic spaces).
Its left adjoint $f^*_{\MotSpc_\bullet}$ is uniquely characterized by commutativity with sifted colimits and the formula $f^*_{\MotSpc_\bullet}(\sF_+) \simeq f^*_{\MotSpc}(\sF)_+$ for any $\sF \in \MotSpc(S)$.
Similarly, for any smooth morphism $p : T \to S$, there is a functor $p_\sharp^{\MotSpc_\bullet}$ left adjoint to $p^*_{\MotSpc_\bullet}$ which is uniquely characterized by commutativity with sifted colimits and the formula $p_\sharp^{\MotSpc_\bullet}(\sF_+) \simeq p_\sharp^{\MotSpc}(\sF)_+$ for any $\sF \in \MotSpc(T)$.
The smooth base change and projection formulas (Propositions~\ref{prop:smooth base change} and \ref{prop:smooth projection formula}) have obvious pointed analogues that we leave to the reader to formulate.
\end{constr}

\section{Results}
\label{sec:results}

\ssec{Exactness of \texorpdfstring{$i_*$}{i lower star}}

Our first goal is to prove the following:

\begin{thm} \label{thm:i_* commutes with contractible colimits}
Let $i : Z \hook S$ be a closed immersion of spectral algebraic spaces.
Then the direct image functor $i_*^{\MotSpc} : \MotSpc(Z) \to \MotSpc(S)$ commutes with contractible colimits.
\end{thm}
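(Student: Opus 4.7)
The plan rests on the standard higher-categorical decomposition: every colimit indexed by a weakly contractible $\infty$-category can be assembled from pushouts and filtered colimits. So I would treat these two cases separately.

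The filtered case is formal. By Remarks~\ref{rem:Nisnevich filtered colimits} and~\ref{rem:htp colimits}, filtered colimits of motivic spaces are again motivic, so filtered colimits in $\MotSpc$ coincide with those in $\Spc$. Since $i_*^{\Spc}$ is the restriction functor along the base change $\Sm_{/S}\to\Sm_{/Z}$, $X\mapsto X\fibprod_S Z$, and restriction of presheaves is pointwise, it commutes with arbitrary colimits at the presheaf level; in particular $i_*^{\MotSpc}$ commutes with filtered colimits.

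The pushout case is the crux. For a span $\sF\leftarrow\sG\to\sH$ in $\MotSpc(Z)$, write $\sT := \sF\sqcup_\sG\sH$ for the pushout formed in $\Spc(Z)$. Using that $i_*^{\Spc}$ commutes with presheaf-level pushouts, commutation of $i_*^{\MotSpc}$ with the given pushout amounts to showing that the canonical comparison
\[
  \Lmot \circ i_*^{\Spc}(\sT) \;\longrightarrow\; i_*^{\Spc} \circ \Lmot(\sT)
\]
is invertible. Using the presentation $\Lmot = \colim_n (\Lhtp\LNis)^{\circ n}$ from~\eqref{eq:description of Lmot} together with the commutation with filtered colimits from the first step, this reduces further to intertwining $i_*^{\Spc}$ with each of $\Lhtp$ and $\LNis$ on the presheaf $\sT$. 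Commutation with $\Lhtp$ is immediate from~\eqref{eq:formula for Lhtp} and a cofinality check for the base change functor $\bA_X \to \bA_{X\fibprod_S Z}$.

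The main obstacle will be commutation with $\LNis$. On arbitrary presheaves this \emph{fails}, most visibly on the initial presheaf: $\LNis\circ i_*^{\Spc}$ produces the initial Nisnevich sheaf over $S$, whereas $i_*^{\Spc}\circ\LNis$ produces the Nisnevich sheaf $j_\sharp(\pt_U)$ supported on the open complement. This discrepancy is precisely what forces contractibility of the indexing $\infty$-category, since it excludes the empty diagram. To verify the commutation on pushouts of motivic spaces, I would use~\propref{prop:Nisnevich separation of MotSpc} and~\propref{prop:smooth base change} to reduce to the case of an affine base $S$, and then exploit the Nisnevich-local structure of smooth morphisms (\remref{rem:smooth structure}) together with Lurie's lifting of étale morphisms from \cite[Ch.~11]{SAG-20180204} to match Nisnevich covers of $X\fibprod_S Z$ with base changes of Nisnevich covers of $X$. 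The resulting \v{C}ech-level comparison, combined with the pointwise nature of $i_*^{\Spc}$, should yield the required commutation on the presheaves $\sT$ arising as pushouts of Nisnevich sheaves.
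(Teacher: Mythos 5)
Your reduction to filtered colimits and pushouts is a legitimate alternative to the route the paper takes, the filtered case is handled correctly, and your diagnosis of the obstruction — that $i_*^{\Spc}$ and $\LNis$ disagree on the initial presheaf, producing the initial Nisnevich sheaf on one side and $j_\sharp(\pt_U)$ on the other — is exactly the right observation. However, the pushout case has a real gap. Reducing the invertibility of $\Lmot \circ i_*^{\Spc}(\sT) \to i_*^{\Spc} \circ \Lmot(\sT)$ to ``intertwining $i_*^{\Spc}$ with $\Lhtp$ and $\LNis$ on the presheaf $\sT$'' does not suffice: since $\Lmot = \colim_{n}\,(\Lhtp\circ\LNis)^{\circ n}$ is a transfinite composite, you need the intertwining at \emph{every} stage $\sT$, $\LNis\sT$, $\Lhtp\LNis\sT,\ldots$, and these intermediate presheaves are no longer pushouts of sheaves. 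What is really needed is the morphism-class statement that $i_*^{\Spc}$ carries Nisnevich-local (resp.\ $\A^1$-local) equivalences \emph{between reduced presheaves} to Nisnevich-local (resp.\ $\A^1$-local) equivalences; then $\sT\to\Lmot\sT$, a chain of such equivalences between reduced presheaves, is sent to a motivic equivalence and you are done. Your ``\v{C}ech-level comparison'' is an attempt to prove the special case directly, but the Nisnevich-local equivalence $\sT\to\LNis\sT$ is an abstract sheafification map with no tractable \v{C}ech description, and I do not see that plan closing without effectively reproving the general statement.

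The paper takes a different route worth comparing. Rather than decomposing the indexing category, it works in the category of \emph{reduced} presheaves, observing that this is precisely the free cocompletion under contractible colimits — the structural counterpart of your observation about the initial object. It then isolates a condition on the base change functor $u\colon\Sm_{/S}\to\Sm_{/Z}$, called topological quasi-cocontinuity (\defref{def:quasi-cocontinuous functor}), under which $u^* = i_*^{\Spc}$ sends local equivalences between reduced presheaves to local equivalences (\lemref{lem:quasi-cocontinuous and contractible colimits}). That lemma exploits the fact that local equivalences between reduced presheaves are the closure, under contractible colimits, cobase change and two-of-three, of the covering sieves, so one only needs to check the lifting condition on a sieve. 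The hypothesis is then verified for a closed immersion (\lemref{lem:quasi-cocontinuous example}) using exactly the Nisnevich-local lifting of smooth and étale morphisms that you cite (\propref{prop:Nisnevich lifting}). So you located both the correct geometric input and the correct obstruction; the missing ingredient in your plan is the reduction to checking quasi-cocontinuity on covering sieves, which is what makes the argument close.
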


In other words, $i_*^{\MotSpc}$ commutes with colimits of diagrams indexed by contractible\footnote{An essentially small \inftyCat is called \emph{contractible} if the \inftyGrpd obtained by formally adjoining inverses to all morphisms, is (weakly) contractible.} \inftyCats.
At the level of pointed spaces, we get:

\begin{cor} \label{cor:i_* admits right adjoint i^!}
The direct image functor $i_*^{\MotSpc_\bullet} : \MotSpc(Z)_\bullet \to \MotSpc(S)_\bullet$ commutes with small colimits.
\end{cor}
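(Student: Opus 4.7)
The strategy is to bootstrap from \thmref{thm:i_* commutes with contractible colimits} by exploiting the pointed structure, via the standard principle that a functor between pointed \inftyCats commutes with all small colimits as soon as it preserves both the zero object and contractible colimits. For the first of these, $i_*^{\MotSpc_\bullet}$ is a right adjoint (to $i^*_{\MotSpc_\bullet}$), hence preserves the terminal object; since $\MotSpc(S)_\bullet$ is pointed, the terminal, initial, and zero objects coincide, so $i_*^{\MotSpc_\bullet}$ preserves the zero object.

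For the second, consider the forgetful functors $U_Z : \MotSpc(Z)_\bullet \to \MotSpc(Z)$ and $U_S : \MotSpc(S)_\bullet \to \MotSpc(S)$. These are conservative and preserve contractible colimits (a general feature of forgetful functors out of pointed objects), and by \constrref{constr:pointed MotSpc} they are compatible with direct image in the sense that $U_S \circ i_*^{\MotSpc_\bullet} \simeq i_*^{\MotSpc} \circ U_Z$. Given a contractible diagram $F : I \to \MotSpc(Z)_\bullet$, the canonical comparison map $\colim i_*^{\MotSpc_\bullet} F \to i_*^{\MotSpc_\bullet} \colim F$ becomes invertible after applying $U_S$, by \thmref{thm:i_* commutes with contractible colimits} combined with the commutations above; conservativity of $U_S$ closes this step.

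It remains to reduce arbitrary small colimits to contractible ones. If $F : I \to \MotSpc(Z)_\bullet$ has $I = \emptyset$, then $\colim F \simeq 0$ and the claim follows from preservation of the zero object. If $I$ is nonempty, I form $I^\triangleleft$ by formally adjoining an initial object $\ast$, extend $F$ to $F^+ : I^\triangleleft \to \MotSpc(Z)_\bullet$ by setting $F^+(\ast) := 0$, and observe that $I^\triangleleft$ is contractible (any \inftyCat with an initial object has contractible classifying space) and that $\colim F^+ \simeq \colim F$: the space of cocones on $F^+$ with vertex $C$ is equivalent to the space of cocones on $F$ with vertex $C$, since the value at $\ast$ is forced to be the essentially unique map $0 \to C$, and the compatibility data with $F^+(\ast \to i) : 0 \to F(i)$ is contractible. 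The analogous statement for $i_*^{\MotSpc_\bullet} \circ F^+$ versus $i_*^{\MotSpc_\bullet} \circ F$ uses preservation of $0$, so the comparison reduces to the contractible case handled in the second step. The genuine mathematical input is concentrated in that step; the third step is formal, but the compatibility of the extension-by-$0$ trick with $i_*^{\MotSpc_\bullet}$ should be spelled out carefully in the \inftyCategorical setting, since one is checking equivalences of mapping spaces rather than set-theoretic equalities.
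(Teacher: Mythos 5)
Your proposal is correct and follows essentially the same route as the paper: reduce preservation of all small colimits to preservation of the zero object plus preservation of contractible colimits (via the extend-by-zero-over-$I^\triangleleft$ trick, which is exactly the paper's footnote argument), then deduce the latter from \thmref{thm:i_* commutes with contractible colimits} by transporting along the conservative forgetful functors. The paper states the two reduction steps more tersely, but the content is the same.
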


\begin{proof}
Note that it suffices to show that $i_*^{\MotSpc_\bullet}$ commutes with contractible colimits and preserves the initial object.\footnotemark
\footnotetext{Let $D$ be a diagram indexed on an \inftyCat $\bI$.
Then the initial object defines a cone over $D$, which we may view as another diagram $D'$ with the same colimit but which is indexed on a \emph{contractible} \inftyCat.}
The former condition follows directly from the unpointed statement (\thmref{thm:i_* commutes with contractible colimits}), and the fact that $i_*^{\MotSpc_\bullet}$ preserves the initial object (= terminal object) is obvious.
\end{proof}

\begin{rem}
It follows from \corref{cor:i_* admits right adjoint i^!} that the functor $i_*^{\MotSpc_\bullet}$ admits a right adjoint $i^!_{\MotSpc_\bullet}$, called the \emph{exceptional inverse image} functor.
A concrete description of the functor $i^!_{\MotSpc_\bullet}$ can be given using the localization theorem: see \remref{rem:i^! concrete}.
\end{rem}

The geometric input into the proof of \thmref{thm:i_* commutes with contractible colimits} is a spectral version of \cite[Prop.~18.1.1]{EGAIV4}:

\begin{prop}\label{prop:Nisnevich lifting}
Let $i : Z \hook S$ be a closed immersion of spectral algebraic spaces.
For any smooth (resp. \'etale) morphism $q : Y \to Z$, there exists, Nisnevich-locally on $Y$, a smooth (resp. \'etale) morphism $p: X \to S$, and a cartesian square
  \begin{equation*}
    \begin{tikzcd}
      Y \arrow{d}{q} \arrow{r}
        & X\arrow{d}{p} \\
      Z \arrow[hookrightarrow]{r}
        & S.
    \end{tikzcd}
  \end{equation*}
\end{prop}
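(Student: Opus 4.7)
The plan is to reduce to a classical analogue using Lurie's theorem on the étale site (\thmref{thm:intro/etale nil}), which identifies étale $R$-algebras with étale $\pi_0(R)$-algebras for any \cEring $R$.

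First, I would reduce the smooth case to the étale case.  By \remref{rem:smooth structure}, Nisnevich-locally on $Y$ the smooth morphism $q$ factors as $Y \to \A^n_Z \to Z$ with $Y \to \A^n_Z$ étale.  Since $\A^n_Z = \A^n_S \fibprod_S Z$, lifting the étale morphism $Y \to \A^n_Z$ to an étale morphism $X \to \A^n_S$ yields a smooth morphism $X \to \A^n_S \to S$ satisfying $X \fibprod_S Z \simeq Y$.

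Next, I would reduce to the affine case.  A Nisnevich cover of $S$ induces a Nisnevich cover of $Y$ by base change, and a lift of $Y_\alpha := Y \fibprod_S S_\alpha$ to an étale morphism $X_\alpha \to S_\alpha$ composes with the étale morphism $S_\alpha \to S$ to give a lift over $S$ (with $X_\alpha \fibprod_S Z$ canonically identified with $Y_\alpha$).  So I may assume $S = \Spec(A)$, $Z = \Spec(B)$ and $Y = \Spec(C)$ are affine, with $A \to B$ surjective on $\pi_0$ and $C$ an étale $B$-algebra.  \thmref{thm:intro/etale nil} then provides equivalences $\CAlg^\et_R \simeq \CAlg^\et_{\pi_0(R)}$ for $R \in \{A, B\}$ via $(-) \mapsto \pi_0(-)$, and flatness of étale morphisms gives $\pi_0(A' \otimes_A B) \simeq \pi_0(A') \otimes_{\pi_0(A)} \pi_0(B)$, so these equivalences are compatible with base change.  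Hence lifting $C$ to an étale $A$-algebra $A'$ with $A' \otimes_A B \simeq C$ is equivalent to lifting the classical étale $\pi_0(B)$-algebra $\pi_0(C)$ to an étale $\pi_0(A)$-algebra along $\pi_0(A) \twoheadrightarrow \pi_0(B)$; and Nisnevich covers of $Y$ correspond bijectively to Nisnevich covers of $Y_\cl = \Spec(\pi_0(C))$.

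The remaining classical statement is a form of EGA~IV$_4$, Proposition~18.1.1.  To avoid noetherian hypotheses, I would use the standard étale form: Zariski-locally on $\Spec(\pi_0(C))$ the étale morphism has the shape $\Spec(\pi_0(B)[g^{-1}][T]_h/(f)) \to \Spec(\pi_0(B)[g^{-1}])$ with $f$ monic in $T$ and $\partial_T f$ invertible on the source.  Lifting $g$, $h$ and $f$ arbitrarily along the surjection $\pi_0(A) \twoheadrightarrow \pi_0(B)$ and then restricting $\Spec(\pi_0(A)[\tilde g^{-1}][T]_{\tilde h}/(\tilde f))$ to the open locus where $\partial_T \tilde f$ is invertible yields an étale $\pi_0(A)$-algebra, and this locus contains the entire fibre over $Z$ because its reduction $\partial_T f$ is a unit on that fibre.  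The main obstacle is this last openness step in the classical case; once it is in place, Lurie's theorem transports the classical lift back to a spectral étale morphism $X \to S$ fitting into the required cartesian square.
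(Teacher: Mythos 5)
Your proposal follows essentially the same route as the paper's proof: reduce smooth to étale via the factorization through $\A^n_Z$ (\remref{rem:smooth structure}), pass to the affine case, and then invoke Lurie's equivalence of étale sites (\thmref{thm:intro/etale nil}, i.e.\ \cite[Thm.~7.5.0.6]{HA-20170918}) to reduce to the classical problem of lifting an étale $\pi_0(R')$-algebra Zariski-locally along the surjection $\pi_0(R) \twoheadrightarrow \pi_0(R')$, which the paper handles by citing \cite[Prop.~18.1.1]{EGAIV4}. Your extra detail — making the base-change compatibility explicit and replacing the EGA citation by the standard-étale presentation argument — is a sound, more self-contained way to finish the classical step and in particular sidesteps any worry about noetherian hypotheses.
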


\begin{proof}
The smooth case follows from the étale case by factoring $q : Y \to Z$ through an étale morphism to an affine space $\A^n_Z$ (which can always be done Nisnevich-locally).
Therefore it will suffice to show the étale case.
The question being Nisnevich-local, we may assume that $S$, $Z$ and $Y$ are affine; let $S = \Spec(R)$, $Z = \Spec(R')$ and $Y = \Spec(A')$.
The étale $R'$-algebra $A'$ induces an étale $\pi_0(R')$-algebra $A' \otimes_{R'} \pi_0(R') \simeq \pi_0(A')$.
Note that it will suffice to demonstrate the claim Zariski-locally on $\pi_0(A')$ (since any Zariski covering of $\pi_0(A')$ lifts uniquely to a Zariski covering of $A'$).
By \cite[Prop.~18.1.1]{EGAIV4} the $\pi_0(R')$-algebra $\pi_0(A')$ lifts, Zariski-locally on $\pi_0(A')$, to an étale $\pi_0(R)$-algebra $A_0$.
By \cite[Thm.~7.5.0.6]{HA-20170918}, the latter lifts uniquely to an étale $R$-algebra $A$.
\end{proof}

In order to deduce \thmref{thm:i_* commutes with contractible colimits} from \propref{prop:Nisnevich lifting}, we will need to make a small topos-theoretic digression.
Let $\bC$ and $\bD$ be essentially small \inftyCats, equipped with Grothendieck topologies $\tau_\bC$ and $\tau_\bD$, respectively.
Recall that a functor $u : \bC \to \bD$ is \emph{topologically cocontinuous} if the following condition is satisfied:

(COC)
For every $\tau_\bD$-covering sieve $R' \hook h(u(c))$, the sieve $R \hook h(c)$, generated by morphisms $c' \to c$ such that $h(u(c')) \to h(u(c))$ factors through $R'$, is $\tau_\bC$-covering.

\begin{defn} \label{def:quasi-cocontinuous functor}
Let $\bC$ and $\bD$ be essentially small \inftyCats, equipped with Grothendieck topologies $\tau_\bC$ and $\tau_\bD$, respectively.
Assume that $\bD$ admits an initial object $\initial_\bD$.
A functor $u : \bC \to \bD$ is \emph{topologically quasi-cocontinuous} if it satisfies the following condition:

\emph{($\text{COC'}$)}
For every $\tau_\bD$-covering sieve $R' \hook h(u(c))$, the sieve $R \hook h(c)$, generated by morphisms $c' \to c$ such that either $u(c')$ is initial or $h(u(c')) \to h(u(c))$ factors through $R' \hook h(u(c))$, is $\tau_\bC$-covering.
\end{defn}

Let $\PSh(\bD)$ denote the \inftyCat of presheaves of spaces on $\bC$, $\Sh_{\tau_\bD}(\bD)$ the full subcategory of $\PSh(\bD)$ spanned by $\tau_{\bD}$-sheaves.
We write $\sF \mapsto \L_{\tau_\bD}(\sF)$ for the (left-exact) localization functor.

\begin{lem} \label{lem:quasi-cocontinuous and contractible colimits}
With notation as in \defref{def:quasi-cocontinuous functor}, let $u : \bC \to \bD$ be a topologically quasi-cocontinuous functor.
Assume that the initial object $\initial_\bD$ is \emph{strict} in the sense that for any object $d\in\bD$, any morphism $d \to \initial_\bD$ is invertible.
Assume also that, for any object $d\in\bD$, the sieve $\initial_{\PSh(\bD)} \hook h(d)$ is $\tau_\bD$-covering if and only if $d$ is initial (where $\initial_{\PSh(\bD)}$ denotes the initial object of $\PSh(\bD)$).
Then we have:

\noindent{\em(i)}
The restriction functor $u^* : \PSh(\bD) \to \PSh(\bC)$ sends $\tau_\bD$-local equivalences between reduced\footnotemark~presheaves to $\tau_\bC$-local equivalences.
\footnotetext{We say that a presheaf $\sF$ is reduced if it sends the initial object to a contractible space.}

\noindent{\em(ii)}
The functor $\Sh_{\tau_\bD}(\bD) \to \Sh_{\tau_\bC}(\bC)$, given by the assignment $\sF \mapsto \L_{\tau_\bC}(u^*(\sF))$, commutes with contractible colimits.
\end{lem}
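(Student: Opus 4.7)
The plan is to prove (i) first, and deduce (ii) as a formal colimit manipulation. For (ii), given a diagram $D: \bI \to \Sh_{\tau_\bD}(\bD)$ indexed on a contractible \inftyCat, the presheaf-level colimit $\colim^{\PSh(\bD)}_\bI D$ is automatically reduced: each $D_i$ is reduced by the second hypothesis on $\initial_\bD$, and its value at $\initial_\bD$ equals $\colim_\bI \pt \simeq \pt$ by contractibility of $\bI$. Hence the sheafification unit $\colim^{\PSh(\bD)}_\bI D \to \colim^{\Sh_{\tau_\bD}}_\bI D$ is a $\tau_\bD$-local equivalence between reduced presheaves; by (i), $u^*$ sends it to a $\tau_\bC$-local equivalence. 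Combining this with the commutations of $u^*$ and $\L_{\tau_\bC}$ with colimits (both being left adjoints) will yield the desired identification $\L_{\tau_\bC} u^*(\colim^{\Sh_{\tau_\bD}}_\bI D) \simeq \colim^{\Sh_{\tau_\bC}}_\bI \L_{\tau_\bC} u^* D$.

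For (i), I plan to exploit that $\tau_\bD$-local equivalences form a strongly saturated class, together with universality of colimits in $\PSh(\bC)$, in order to reduce to an explicit computation on a generating set. Since $\tau_\bD$-sheaves are reduced by hypothesis, the $\tau_\bD$-local equivalences between reduced presheaves are generated, as a strongly saturated class in $\PSh^{\mathrm{red}}(\bD)$, by reductions $(R')_{\mathrm{red}} \hookrightarrow h(d)$ of covering-sieve inclusions $R' \hookrightarrow h(d)$ for $\tau_\bD$-covering sieves $R'$ with $d \neq \initial_\bD$; here $(R')_{\mathrm{red}}$ agrees with $R'$ away from $\initial_\bD$ but takes the value $\pt$ at $\initial_\bD$. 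It therefore suffices to show that $u^*$ sends each such generator to a $\tau_\bC$-local equivalence.

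By universality of colimits in $\PSh(\bC)$, this reduces further: for each $c \in \bC$ and each morphism $\alpha: u(c) \to d$ in $\bD$, I will show that the pullback of $u^*(R')_{\mathrm{red}} \hookrightarrow u^* h(d)$ along the corresponding $h(c) \to u^* h(d)$ is a $\tau_\bC$-local equivalence. A direct calculation---using that $(R')_{\mathrm{red}}(u(c'))$ is $\pt$ whenever $u(c') \simeq \initial_\bD$---identifies this pullback with the sub-presheaf of $h(c)$ consisting, at each $c' \in \bC$, of morphisms $g: c' \to c$ such that either $u(c') \simeq \initial_\bD$, or $\alpha \circ u(g) \in R'$. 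But this is precisely the sieve appearing in condition ($\mathrm{COC}'$) applied to the pulled-back covering sieve $\alpha^* R' \hookrightarrow h(u(c))$ (itself $\tau_\bD$-covering by stability of topologies under base change), and hence is $\tau_\bC$-covering on $c$, as required.

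The main obstacle I anticipate is the essential role of the reduction. Without forcing $(R')(u(c'))$ to be $\pt$ when $u(c') \simeq \initial_\bD$, the pullback computation would omit the ``$u(c')$ initial'' branch, and the resulting sub-sieve of $h(c)$---consisting only of morphisms $g$ with $\alpha \circ u(g) \in R'$---is generally \emph{not} $\tau_\bC$-covering. The reducedness hypothesis on $\sF$ and $\sG$ in (i) is precisely what legitimizes passing from $R'$ to $(R')_{\mathrm{red}}$, and is thereby the structural ingredient that matches the pullback computation to condition ($\mathrm{COC}'$).
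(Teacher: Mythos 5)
Your proof is correct and follows essentially the same approach as the paper: reduce (i) to showing that $u^*$ sends each generating covering-sieve inclusion $R' \hook h(d)$ (for $d$ non-initial) to a $\tau_\bC$-local equivalence, then compute the pullback along a representable $h(c) \to u^*h(d)$ and identify it with the (COC$'$) sieve for $\alpha^*R' \hook h(u(c))$; then (ii) follows formally since the colimit of reduced presheaves over a contractible index is again reduced. The paper organizes the pullback computation by factoring $\varphi : h(c) \to u^*h(d)$ through the unit $h(c)\to u^*h(u(c))$ rather than computing it directly, but this is a cosmetic difference.

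One small correction to your final paragraph: you claim the passage from $R'$ to $(R')_{\mathrm{red}}$ is an essential ingredient. In fact, when $d$ is non-initial and $R'$ is a $\tau_\bD$-covering sieve of $d$, the presheaf $R'$ is \emph{already} reduced, so $(R')_{\mathrm{red}} \simeq R'$. Indeed, by the standing hypothesis the empty sieve covers $d$ only if $d$ is initial, so $R'$ is non-empty; closing the sieve under precomposition with $\initial_\bD \to d'$ shows $R'(\initial_\bD)$ is non-empty; and $R'(\initial_\bD) \hook h(d)(\initial_\bD) \simeq \pt$ is a monomorphism of spaces, hence $R'(\initial_\bD)\simeq\pt$. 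The paper records exactly this: ``$\L_\red(s) = s$, as $h(d)$ is reduced and hence so is $R'$''. So the reduction you invoke is a no-op; what your pullback computation actually uses is the automatic reducedness of $R'$. This misattribution doesn't damage the argument, but it's worth being clear that reducedness of the covering sieves is not something imposed by $\L_\red$ --- it is forced by the hypothesis on the empty covering sieve, and it is what makes $\L_\red(\sA)$ (apart from $\L_\red(e)$, which is invertible) consist of the original covering-sieve inclusions.
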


\begin{proof}
Let $\PSh_\red(\bD)$ the full subcategory of $\PSh(\bD)$ spanned by reduced presheaves.
This is a left localization, and the localization functor $\sF \mapsto \L_\red(\sF)$ has the effect of forcing $\L_\red(\sF)(\initial) \simeq \pt$ (while $\sF(d) \to \L_\red(\sF)(d)$ is an isomorphism whenever $d$ is not initial).
Note also that the \inftyCat $\PSh_\red(\bD)$ is the free completion of $\bD$ by contractible colimits.

Let $\sA$ denote the set of morphisms in $\PSh(\bD)$ containing all isomorphisms, the canonical morphism $e : \initial_{\PSh(\bD)} \hook h(\initial_\bD)$, and all $\tau_\bD$-covering sieves $R' \hook h(d)$ of a non-initial object $d \in \bD$.
Then the set of $\tau_\bD$-local equivalences in $\PSh(\bD)$ is the closure of $\sA$ under small colimits, cobase change, and the $2$-of-$3$ property.
It follows that the set of $\tau_\bD$-local equivalences in $\PSh_{\red}(\bD)$ is the closure of the set $\L_\red(\sA)$ under \emph{contractible} colimits, cobase change, and the $2$-of-$3$ property.
Therefore, for the first claim it will suffice to show that $u^*$ sends every morphism in $\L_\red(\sA)$ to a $\tau_\bC$-local equivalence (since the subcategory $\PSh_\red(\bD)$ is closed under contractible colimits).
This is clear for the morphism $\L_\red(e)$, as it is already invertible.

Now let $s : R' \hook h(d)$ be a $\tau_\bD$-covering sieve of a non-initial object $d\in\bD$.
Note that we have $\L_\red(s) = s$, as $h(d)$ is reduced and hence so is $R'$ (since $s$ is a monomorphism and $R'$ is by assumption non-empty).
Thus we need to show that $u^*(s)$ is a $\tau_\bC$-local equivalence.
By universality of colimits it is sufficient to show that, for every object $c$ of $\bC$ and every morphism $\varphi : h(c) \to u^* h(d)$, the base change
  \begin{equation*}
    u^* R' \fibprod_{u^* h(d)} h(c) \hook h(c)
  \end{equation*}
is a $\tau_\bC$-covering sieve.
By adjunction, $\varphi$ factors through the unit $h(c) \to u^* u_! h(c) = u^* h(u(c))$ and the morphism $u^*(\varphi^\flat) : u^* h(u(c)) \to u^* h(d)$, where $\varphi^\flat$ is the left transpose of $\varphi$.
The base change of $\varphi$ by $u^* h(u(c)) \to u^* h(d)$ is identified, since $u^*$ commutes with limits, with the canonical morphism
  \begin{equation*}
    u^*(R' \fibprod_{h(d)} h(u(c))) \to u^* h(u(c)).
  \end{equation*}
Since the sieve $R' \fibprod_{h(d)} h(u(c)) \hook h(u(c))$ is $\tau_\bD$-covering, as the base change of a $\tau_\bD$-covering sieve, the conclusion now follows from the condition (COC').

The second assertion is a formal consequence of the first, using the fact that every $\tau_\bD$-sheaf is reduced (by assumption).
\end{proof}

\begin{lem}\label{lem:quasi-cocontinuous example}
Let $i : Z \hook S$ be a closed immersion of spectral algebraic spaces.
Then the base change functor $\Sm_{/S} \to \Sm_{/Z}$ is topologically quasi-cocontinuous (with respect to the Nisnevich topology).
\end{lem}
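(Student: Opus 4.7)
The plan is to verify condition (COC') from \defref{def:quasi-cocontinuous functor} directly. Fix $X \in \Sm_{/S}$ and set $Y := X \fibprod_S Z = u(X)$. Given a Nisnevich covering sieve $R'$ on $Y$, we must show that the sieve $R$ on $X$ generated by morphisms $X' \to X$ in $\Sm_{/S}$ with either $X' \fibprod_S Z$ initial or $X' \fibprod_S Z \to Y$ factoring through $R'$ is itself Nisnevich-covering. By definition, $R'$ contains a covering family $\{f_\beta : T_\beta \to Y\}_\beta$ from the pretopology of \constrref{constr:Nisnevich topology}, and since Nisnevich squares are built from open immersions and étale morphisms, each $f_\beta$ is smooth.

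Next I would apply \propref{prop:Nisnevich lifting} to the base-changed closed immersion $Y \hook X$ and each smooth morphism $f_\beta$. This produces, for each $\beta$, a Nisnevich cover $\{T_{\beta,\alpha} \to T_\beta\}_\alpha$ together with smooth lifts $\tilde f_{\beta,\alpha} : \tilde T_{\beta,\alpha} \to X$ fitting in cartesian squares with $\tilde T_{\beta,\alpha} \fibprod_S Z \simeq T_{\beta,\alpha}$. Since each composite $T_{\beta,\alpha} \to T_\beta \to Y$ lies in $R'$, each $\tilde f_{\beta,\alpha}$ belongs to $R$. In addition, the quasi-compact open complement $j : X \setminus Y \hook X$ of the closed immersion $Y \hook X$ satisfies $(X \setminus Y) \fibprod_S Z = \initial$, so $j$ also belongs to $R$ via the ``initial'' clause of (COC').

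It remains to verify that the combined family $\{j\} \cup \{\tilde f_{\beta,\alpha}\}_{\beta,\alpha}$ Nisnevich-covers $X$. Every point of $X$ lies either in $X \setminus Y$---in which case it is covered by $j$---or in $Y$. For a point $y \in Y$, the composite $\{T_{\beta,\alpha} \to Y\}_{\beta,\alpha}$ is itself a Nisnevich cover of $Y$ (composites of Nisnevich covers being Nisnevich), so some $T_{\beta,\alpha}$ admits a point above $y$ inducing an isomorphism of residue fields. The cartesian identification $T_{\beta,\alpha} \simeq \tilde T_{\beta,\alpha} \fibprod_X Y$ then lifts this to a point of $\tilde T_{\beta,\alpha}$ above $y$ with the same residue field, completing the check.

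The essential geometric input is \propref{prop:Nisnevich lifting}, which produces smooth lifts only Nisnevich-locally on the source; this is exactly what forces us to work with quasi-cocontinuity rather than strict cocontinuity. The ``initial'' escape clause in (COC') is indispensable here: it provides the coverage of the open complement $X \setminus Y$, which has no counterpart in the sieve $R'$ on $Y$ and cannot be recovered from the Nisnevich-local lifts of the $f_\beta$.
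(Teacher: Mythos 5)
Your approach coincides with the paper's: unravel condition (COC') and invoke \propref{prop:Nisnevich lifting} to produce Nisnevich-local lifts of étale covers of $Y = X\fibprod_S Z$. You fill in more detail than the paper's one-line appeal to the proposition, and the overall strategy is sound, but two points require repair or at least explicit justification.

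First, you write ``the quasi-compact open complement $j : X\setminus Y \hook X$'' and include it in your covering family. However, the lemma does \emph{not} assume that $i : Z \hook S$, nor the base-changed closed immersion $Y \hook X$, has quasi-compact open complement --- and in the non-noetherian setting the paper works in, this can fail, in which case $X\setminus Y$ is not an object of $\Sm_{/S}$ at all (it is not of finite presentation over $S$) and cannot appear as a single member of your family. The ``initial'' clause of (COC') contributes instead the filtered family of \emph{all} quasi-compact opens $W \hook X$ with $W \fibprod_S Z = \initial$, whose union is $X\setminus Y$. With this replacement, the family $\{W\}_{W}\cup\{\tilde f_{\beta,\alpha}\}$ is still point-surjective (with isomorphic residue fields) in the Nisnevich sense, so the argument can be rescued, but as written it presupposes a hypothesis the lemma does not give you.

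Second, your final verification is a pointwise one: you check that every point of $X$ is hit with an isomorphism of residue fields, and conclude that the family is Nisnevich-covering. Since the paper defines the Nisnevich topology via Nisnevich squares (equivalently, via the filtration/excision criterion of SAG, by \remref{rem:Lurie Nis topology}), deducing a covering from a pointwise condition requires invoking the nontrivial equivalence of the two characterizations for quasi-compact quasi-separated algebraic spaces. That equivalence is true and standard, but it should be cited explicitly --- especially given the paper's deliberate effort to avoid point-based arguments in the non-noetherian setting. A version of the argument closer to the paper's conventions would directly build a finite filtration of $X$ refining the filtration witnessing that $\coprod T_{\beta,\alpha} \to Y$ is a Nisnevich cover, rather than routing through points. (Minor: the $f_\beta$ from a Nisnevich-square family are étale, not merely smooth; you should use the étale case of \propref{prop:Nisnevich lifting} so that the lifts $\tilde f_{\beta,\alpha}$ are actually étale, which your point-count requires.)
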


\begin{proof}
Unravelling the definition, this amounts to the following assertion:

$(\ast)$ For any $X\in\Sm_{/S}$ and any Nisnevich covering sieve $R'$ of $X \fibprod_S Z$, let $R \hook \h_S(X)$ denote the sieve generated by morphisms $X' \to X$ such that either (i) the empty sieve on $X'\fibprod_S Z$ is Nisnevich-covering, or (ii) $X' \fibprod_S Z \to X \fibprod_S Z$ factors through $R'$.
Then $R \hook \h_S(X)$ is Nisnevich-covering.

This follows directly from \propref{prop:Nisnevich lifting}, which says that \'etale morphisms can be lifted (Nisnevich-locally) along $i$.
\end{proof}

\begin{proof}[Proof of \thmref{thm:i_* commutes with contractible colimits}]
Follows from Lemmas~\ref{lem:quasi-cocontinuous and contractible colimits} and \ref{lem:quasi-cocontinuous example}.
\end{proof}

\ssec{The localization theorem}

We now state the main result of this paper, and explain some of its immediate consequences.

\begin{constr}
Let $i : Z \hook S$ be a closed immersion of spectral algebraic spaces with quasi-compact open complement $j : U \hook S$.
Given a motivic space $\sF \in \MotSpc(S)$, consider the tautologically commuting square
  \[ \begin{tikzcd}
    j_\sharp^\MotSpc j^*_\MotSpc (\sF) \arrow{r}{\mrm{counit}}\arrow{d}{\mrm{unit}}
      & \sF \arrow{d}{\mrm{unit}} \\
    j_\sharp^\MotSpc j^*_\MotSpc i_*^\MotSpc i^*_\MotSpc (\sF) \arrow{r}{\mrm{counit}}
      & i_*^\MotSpc i^*_\MotSpc (\sF).
  \end{tikzcd} \]
Up to the canonical identification $j^*_\MotSpc i_*^\MotSpc \simeq \pt_U$ (\corref{cor:j^*i_* trivial}), this square is identified with a canonical commutative square
  \begin{equation} \label{eq:localization square}
    \begin{tikzcd}
      j_\sharp^\MotSpc j^*_\MotSpc (\sF) \arrow{r}\arrow{d} & \sF \arrow{d} \\
      j_\sharp^\MotSpc (\pt_U) \arrow{r} & i_*^\MotSpc i^*_\MotSpc (\sF)
    \end{tikzcd}
  \end{equation}
that we call the \emph{localization square} associated to $i$.
\end{constr}

\begin{thm}[Localization] \label{thm:localization}
Let $i : Z \hook S$ be a closed immersion of spectral algebraic spaces with quasi-compact open complement $j : U \hook S$.
Then for every motivic space $\sF \in \MotSpc(S)$, the localization square \eqref{eq:localization square} is cocartesian.
\end{thm}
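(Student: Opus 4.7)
My plan is to follow the general strategy of Morel--Voevodsky, adapted to the spectral setting, in two broad stages: a reduction to a representable case, followed by a local geometric argument centered on vector bundles.

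\emph{Stage 1 (reduction to representables).} Both sides of the proposed equivalence
\begin{equation*}
i_*^{\MotSpc} i^*_{\MotSpc} \sF \simeq \sF \sqcup_{j_\sharp^{\MotSpc} j^*_{\MotSpc} \sF} j_\sharp^{\MotSpc}(\pt_U),
\end{equation*}
viewed as endofunctors of $\MotSpc(S)$, commute with contractible colimits in $\sF$: the left-hand side by \thmref{thm:i_* commutes with contractible colimits} combined with the fact that $i^*_{\MotSpc}$ is a colimit-preserving left adjoint, and the right-hand side because the two non-constant vertices commute with all colimits while the constant vertex $j_\sharp(\pt_U)$ is preserved by any contractible colimit (any contractible $\infty$-category admits a cofinal map from a point). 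Since $\MotSpc(S)$ is generated under sifted — hence contractible — colimits by objects $\hmot[S]{X}$ with $X \in \Sm_{/S}$ (\propref{prop:MotSpc generated by affines}), this reduces the theorem to the case $\sF = \hmot[S]{X}$.

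\emph{Stage 2 (representable case).} For $\sF = \hmot[S]{X}$, writing $X_U := X \fibprod_S U$ and $X_Z := X \fibprod_S Z$, the corners become $\hmot[S]{X_U}$, $\hmot[S]{X}$, $\hmot[S]{U}$ and $i_*^{\MotSpc} \hmot[Z]{X_Z}$; sections of the last over $Y \in \Sm_{/S}$ are naturally the space of ``partial sections'' $Y_Z \to X$ over $S$. Using universality of colimits in $\MotSpc(S)$ (\remref{rem:MotSpc categorical}), cocartesianness reduces to the following fiberwise statement: for every partial section $t : Z \to X$ over $S$, the presheaf $\h_S(X, t)$ defined as the homotopy fiber of $\h_S(X) \to i_* \h_Z(X_Z)$ at $t$ — the space of lifts of $t$ to a global section $S \to X$ — is motivically contractible. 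The case of partial sections over auxiliary $Y$ reduces to $Y = S$ by base-changing the closed immersion along $Y \to S$, to which the same theorem applies.

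\emph{Stage 3 (contractibility of $\h_S(X,t)$).} I would argue in three sub-steps. (a) By \propref{prop:Nisnevich lifting}, together with the local structure of smooth morphisms (\remref{rem:smooth structure}), one can, Nisnevich-locally on $X$, lift $t$ to an honest global section $s : S \to X$. (b) By \remref{rem:smooth structure} again, Nisnevich-locally $X$ admits an étale $S$-morphism to $\A^n_S$; translating coordinates on $\A^n_S$ by the composite $S \xrightarrow{s} X \to \A^n_S$ arranges that $s$ maps to the zero section, yielding a Nisnevich-local étale ``linear approximation'' of $(X, s)$ by the trivial vector bundle $(\A^n_S, 0)$. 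A separate invariance lemma would then show that $\h_S(X, t)$ is unchanged under such étale approximations. (c) For $E = \A^n_S$ with zero section $0_E$, the scalar multiplication $\A^1_S \fibprod_S E \to E$, $(\lambda, v) \mapsto \lambda \cdot v$, provides an explicit $\A^1$-homotopy from the constant map at $0_E$ to the identity on $E$, contracting $\h_S(E, 0_E)$ to a point.

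I expect the étale-invariance claim in step (b) to be the main obstacle: it requires threading Nisnevich descent on $X$ through the fiber construction defining $\h_S(X, t)$ in a way compatible with the behavior of étale morphisms along the closed immersion $i$ — this is where the spectral input (étale lifting as in \thmref{thm:intro/etale nil}) most delicately interacts with the fiberwise reformulation. Steps (a) and (c) are then direct spectral translations of the classical arguments, with (c) crucially using the spectral affine line (which carries a multiplication action) rather than its flat variant.
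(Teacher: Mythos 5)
Your proposal follows essentially the same route as the paper's proof: reduce to representables via generation under contractible colimits (\thmref{thm:i_* commutes with contractible colimits}, \propref{prop:MotSpc generated by affines}), reformulate cocartesianness as motivic contractibility of the space $\hspc[S]{X,t}$ of $Z$-trivialized maps (\propref{prop:h(X,t) is contractible}), and deduce that contractibility from Nisnevich-local lifting of sections (\lemref{lem:lifting sections}), étale approximation by a vector bundle (\corref{cor:linear approximation}, \lemref{lem:etale invariance of h(X,t)}), and the scalar-multiplication $\A^1$-homotopy on the vector bundle (\lemref{lem:h(E,s) of vector bundle}). One small slip in Stage 1: a contractible $\infty$-category generally does not admit a cofinal map from a point (the pushout shape $\bullet \leftarrow \bullet \to \bullet$ is already a counterexample); what you actually need, and what is true, is that the colimit of a constant diagram indexed by a weakly contractible $\infty$-category $\bI$ is the constant object, which one sees directly by computing mapping spaces (equivalently, $|\bI|\otimes c \simeq c$), so the conclusion stands.
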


The proof of \thmref{thm:localization} will be carried out in \secref{sec:loc}.
Here we record a few of its consequences.

\begin{cor} \label{cor:localization pointed}
Let $i : Z \hook S$ be a closed immersion of spectral algebraic spaces with quasi-compact open complement $j : U \hook S$.
For any pointed motivic space $\sF \in \MotSpc(S)_\bullet$, there is a cofibre sequence
  \begin{equation}
    j_\sharp j^*(\sF) \to \sF \to i_*i^*(\sF)
  \end{equation}
and a fibre sequence
  \begin{equation}
    i_*i^!(\sF) \to \sF \to j_*j^*(\sF)
  \end{equation}
in $\MotSpc(S)_\bullet$.
\end{cor}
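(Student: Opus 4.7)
For the cofibre sequence, the plan is to verify that, in $\MotSpc(S)_\bullet$, the commutative square
\[
\begin{tikzcd}
j_\sharp j^*(\sF) \ar{r} \ar{d} & \sF \ar{d} \\
\pt \ar{r} & i_*i^*(\sF)
\end{tikzcd}
\]
is cocartesian, which is precisely what it means to have the desired cofibre sequence. This square is canonical: its lower-left corner is $j_\sharp(\pt_U^\bullet)$, and since $j_\sharp$ is a left adjoint in $\MotSpc(S)_\bullet$ it preserves the zero object, hence $j_\sharp(\pt_U^\bullet) \simeq \pt$. The comparison with $i_*i^*(\sF)$ is induced from the pointed analog of \corref{cor:j^*i_* trivial}, which gives $j^* i_* \simeq \pt$.

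To verify cocartesianness, I plan to apply the forgetful functor $\MotSpc(S)_\bullet \to \MotSpc(S)$, which preserves pushouts (as they are connected colimits), reducing the question to an identification in $\MotSpc(S)$. Using the formula $(j_\sharp \sG)_{\mathrm{un}} \simeq j_\sharp(\sG_{\mathrm{un}}) \sqcup_{j_\sharp(\pt_U)} \pt_S$ for the underlying of pointed $j_\sharp$, the claim becomes that the unpointed pushout $\sF \sqcup_{P} \pt_S$, where $P = j_\sharp j^*(\sF) \sqcup_{j_\sharp(\pt_U)} \pt_S$, is canonically isomorphic to $i_* i^*(\sF)$. This is established by pasting against the unpointed localization square of \thmref{thm:localization}. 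The key technical input — which I expect to be the main obstacle — is the naturality identity $\epsilon_{\sF} \circ j_\sharp(j^* s_0) = s_0 \circ \epsilon_{\pt_S}$ for the pointed basepoint $s_0 : \pt_S \to \sF$: it shows that the adjunct map $j_\sharp(\pt_U) \to \sF$ factors through the terminal $j_\sharp(\pt_U) \to \pt_S$ followed by $s_0$, which is precisely what makes the two pushout descriptions agree under pasting.

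For the fibre sequence, I plan to derive it from the cofibre sequence by a Yoneda argument. Let $F := \on{Fib}(\sF \to j_* j^*(\sF))$ in $\MotSpc(S)_\bullet$. For any $\sG \in \MotSpc(S)_\bullet$, the adjunction $j^* \dashv j_*$ gives
\[
\mathrm{Map}(\sG, F) \simeq \on{Fib}\bigl(\mathrm{Map}(\sG, \sF) \to \mathrm{Map}(j^*\sG, j^* \sF)\bigr).
\]
On the other hand, applying $\mathrm{Map}(-, \sF)$ to the cofibre sequence $j_\sharp j^*(\sG) \to \sG \to i_* i^*(\sG)$ and using the adjunction $j_\sharp \dashv j^*$ produces a fibre sequence identifying the right-hand side with $\mathrm{Map}(i_* i^*(\sG), \sF)$. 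By the adjunctions $i^* \dashv i_*$ and $i_* \dashv i^!$ (the latter from \corref{cor:i_* admits right adjoint i^!}), this further equals $\mathrm{Map}(\sG, i_* i^!(\sF))$, naturally in $\sG$. By Yoneda, $F \simeq i_* i^!(\sF)$, which gives the desired fibre sequence.
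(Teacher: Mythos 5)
Your proof of the cofibre sequence is essentially the paper's: apply the forgetful functor $\MotSpc(S)_\bullet \to \MotSpc(S)$ (which the paper notes \emph{reflects} contractible colimits, not merely preserves them — that is the property you actually need to reduce the pointed claim to the unpointed one), use the formula for the underlying object of $j_\sharp^{\MotSpc_\bullet}$, and paste against the unpointed localization square. The "naturality identity" $\epsilon_\sF \circ j_\sharp(j^*s_0) \simeq s_0 \circ \epsilon_{\pt_S}$ you flag as the expected main obstacle is in fact automatic — it is precisely naturality of the counit $\epsilon : j_\sharp j^* \to \id$ applied to the basepoint $s_0 : \pt_S \to \sF$ — so this step requires no separate verification.

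Where your write-up genuinely adds something is the fibre sequence. The paper's proof addresses only the cocartesian square and leaves the fibre sequence $i_*i^!(\sF) \to \sF \to j_*j^*(\sF)$ unexplained. Your Yoneda argument is correct and is the right way to supply it: set $F = \Fib(\sF \to j_*j^*\sF)$, use $j^* \dashv j_*$ to identify $\Map(\sG,F)$ with $\Fib(\Map(\sG,\sF) \to \Map(j^*\sG,j^*\sF))$, apply $\Map(-,\sF)$ to the already-proven cofibre sequence in the variable $\sG$ together with $j_\sharp \dashv j^*$ to identify this fibre with $\Map(i_*i^*\sG,\sF)$, and finally use $i_* \dashv i^!$ (from \corref{cor:i_* admits right adjoint i^!}) and $i^* \dashv i_*$ to rewrite it as $\Map(\sG, i_*i^!\sF)$ naturally in $\sG$. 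Yoneda then gives $F \simeq i_*i^!(\sF)$. This is a clean, complete argument for the part the paper treats as implicit.
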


\begin{proof}
The claim is that the commutative square
  \begin{equation*}
    \begin{tikzcd}
      j_\sharp^{\MotSpc_\bullet} j^*_{\MotSpc_\bullet} (\sF) \arrow{r}\arrow{d}
        & \sF \arrow{d}
      \\
      \pt_S \arrow{r}
        & i_*^{\MotSpc_\bullet} i^*_{\MotSpc_\bullet} (\sF)
    \end{tikzcd}
  \end{equation*}
is cocartesian in $\MotSpc(S)_\bullet$.
Since the forgetful functor $\MotSpc(S)_\bullet \to \MotSpc(S)$ reflects contractible colimits, it will suffice to show that the induced square of underlying motivic spaces
  \begin{equation*}
    \begin{tikzcd}
      j_\sharp^{\MotSpc} j^*_{\MotSpc} (\sF) \fibcoprod_{j_\sharp^{\MotSpc} j^*_{\MotSpc}(\pt_S)} \pt_S \arrow{r}\arrow{d}
        & \sF \arrow{d}
      \\
      \pt_S \arrow{r}
        & i_*^{\MotSpc}i^*_{\MotSpc} (\sF).
    \end{tikzcd}
  \end{equation*}
is cocartesian.
By \thmref{thm:localization}, the composite square
  \begin{equation*}
    \begin{tikzcd}
      j_\sharp^{\MotSpc} j^*_{\MotSpc} \sF \arrow{r}\arrow{d}
        & (j_\sharp^{\MotSpc} j^*_{\MotSpc} \sF) \fibcoprod_{j_\sharp^{\MotSpc} j^*_{\MotSpc}(\pt_S)} \pt_S \arrow{r}\arrow{d}
        & \sF \arrow{d}
      \\
      j_\sharp^{\MotSpc} j^*_{\MotSpc}(\pt_S) \arrow{r}
        & \pt_S \arrow{r}
        & i_*^{\MotSpc}i^*_{\MotSpc} \sF.
    \end{tikzcd}
  \end{equation*}
is cocartesian.
Since the left-hand square is evidently cocartesian, it follows that the right-hand square is also cocartesian.
\end{proof}

The following reformulation of the localization theorem is an analogue of Kashiwara's lemma in the setting of D-modules.

\begin{thm} \label{thm:kashiwara}
Let $i : Z \hook S$ be a closed immersion of spectral algebraic spaces with quasi-compact open complement.
Then the direct image functor $i_*^{\MotSpc}$ is fully faithful, and its essential image is spanned by objects $\sF\in\MotSpc(S)$ whose restriction $j^*_{\MotSpc}(\sF) \in \MotSpc(U)$ is contractible.
\end{thm}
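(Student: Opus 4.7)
The plan is to deduce \thmref{thm:kashiwara} from the localization theorem \thmref{thm:localization} together with \corref{cor:j^*i_* trivial}, in three steps.

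First, one direction of the essential image statement is immediate: for $\sG \in \MotSpc(Z)$, \corref{cor:j^*i_* trivial} gives $j^*i_*(\sG) \simeq \pt_U$, so any object in the essential image of $i_*$ has contractible restriction to $U$.

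Next, I would read off the other direction (and the unit being iso on such objects) from the localization square. Suppose $\sF \in \MotSpc(S)$ satisfies $j^*(\sF) \simeq \pt_U$. Then the left vertical arrow of the square \eqref{eq:localization square}, which is $j_\sharp$ applied to the canonical morphism $j^*(\sF) \to \pt_U$, is an isomorphism. Since the square is cocartesian by \thmref{thm:localization}, the right vertical arrow $\sF \to i_*i^*(\sF)$ --- that is, the unit of the adjunction $i^* \dashv i_*$ at $\sF$ --- is also an isomorphism. In particular $\sF$ lies in the essential image of $i_*$, completing the essential image characterization. Applying this to $\sF = i_*(\sG)$ (which satisfies the hypothesis by the previous step) shows that $\eta_{i_*\sG}: i_*\sG \to i_*i^*i_*\sG$ is an isomorphism for every $\sG \in \MotSpc(Z)$.

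For full faithfulness of $i_*$, the triangle identity of the adjunction $i^* \dashv i_*$ shows that $i_*(\varepsilon_\sG): i_*i^*i_*\sG \to i_*\sG$ is an isomorphism (being inverse to $\eta_{i_*\sG}$). To upgrade this to the statement that the counit $\varepsilon_\sG: i^*i_*(\sG) \to \sG$ itself is iso --- the standard criterion for $i_*$ to be fully faithful --- I would verify that $i_*$ is conservative. The key input is the concrete description $(i_*\sH)(X) \simeq \sH(X \fibprod_S Z)$ for $X \in \Sm_{/S}$: combined with \propref{prop:Nisnevich lifting}, which ensures every $Y \in \Sm_{/Z}$ admits a Nisnevich cover by objects of the form $X \fibprod_S Z$, and with Nisnevich descent for motivic spaces, this implies that any morphism in $\MotSpc(Z)$ becoming invertible after $i_*$ must already have been invertible. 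Hence $\varepsilon_\sG$ is iso for every $\sG$, and $i_*$ is fully faithful.

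The main obstacle is the conservativity step: while \propref{prop:Nisnevich lifting} lifts individual smooth $Z$-schemes Nisnevich-locally, fibre products of members of such a cover need not themselves be base changes from $\Sm_{/S}$, so to conclude via Nisnevich descent one needs either an iterated lifting argument or (cleaner) the machinery of topologically quasi-cocontinuous functors developed in \lemref{lem:quasi-cocontinuous and contractible colimits} and \lemref{lem:quasi-cocontinuous example}, which directly gives that Nisnevich sheaves on $\Sm_{/Z}$ are determined by their restriction along the base-change functor.
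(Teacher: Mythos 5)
Your proposal reproduces the paper's proof: the localization theorem and \corref{cor:j^*i_* trivial} yield the essential image characterization and show that the unit (equivalently, $i_*$ of the counit) is invertible, after which full faithfulness reduces to conservativity of $i_*$, and the paper likewise derives conservativity from \propref{prop:Nisnevich lifting}. The concern you flag about that last step is a fair reading of the paper's one-sentence argument, and your first suggested remedy (iterating \propref{prop:Nisnevich lifting} so that every term of the Čech nerve is, Nisnevich-locally, a base change from $\Sm_{/S}$) is the correct one. Your second remedy is misattributed, though: \lemref{lem:quasi-cocontinuous and contractible colimits} together with \lemref{lem:quasi-cocontinuous example} shows that $i_*$ \emph{preserves} local equivalences between reduced presheaves (hence commutes with contractible colimits); this is the forward direction, not the conservativity you need, which is rather a ``density'' statement about the base-change functor and must come from the iterated-lifting argument.
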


\begin{proof}
First we show that $i_*$ is fully faithful.
An application of \thmref{thm:localization} to the motivic space $i_*(\sF) \in \MotSpc(S)$ shows that the co-unit induces an isomorphism $i_*i^*i_* \to i_*$.
By a standard argument it therefore suffices to show that $i_*$ is conservative.
For this let $\varphi : \sF_1 \to \sF_2$ be a morphism in $\MotSpc(Z)$ such that $i_*(\varphi)$ is invertible.
To show that $\varphi$ is invertible, it will suffice to show that
  \begin{equation*}
    \Gamma(X, \sF_1) \to \Gamma(X, \sF_2)
  \end{equation*}
is invertible for each $X \in \Sm_{/Z}$.
By \propref{prop:Nisnevich lifting}, we may assume that $X$ is the base change of some $Y \in\Sm_{/S}$.
Then we have natural isomorphisms $\Gamma(X, \sF_k) \simeq \Gamma(Y, i_*(\sF_k))$ for each $k$, so the claim follows.

Next we identify the essential image of $i_*$.
Suppose that $\sF \in \MotSpc(S)$ with $j^*(\sF)$ contractible.
Then \thmref{thm:localization} yields that the unit map $\sF \to i_*i^*(\sF)$ is invertible, so that $\sF$ belongs to the essential image of $i_*$.
The other inclusion follows from \corref{cor:j^*i_* trivial}.
\end{proof}

\begin{rem}\label{rem:i^! concrete}
We can use the localization theorem to give a concrete description of the abstractly defined functor $i^!_{\MotSpc_\bullet}$.
Namely, it is given by
  \begin{equation*}
    i^!_{\MotSpc_\bullet}(\sF) \simeq \Fib(i^*(\sF) \to i^*j_*j^*(\sF))
  \end{equation*}
for any $\sF \in \MotSpc(Z)_\bullet$.
\end{rem}

\begin{cor}[Nilpotent invariance] \label{cor:nilpotent invariance}
Let $i : S_0 \hook S$ be a closed immersion of spectral algebraic spaces.
Suppose that $i$ induces an isomorphism $(S_0)_{\cl,\red} \simeq S_{\cl,\red}$ of reduced classical algebraic spaces.
Then the functors $i^*_{\MotSpc}$ and $i_*^{\MotSpc}$ are mutually inverse equivalences.
\end{cor}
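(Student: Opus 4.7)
The plan is to apply \thmref{thm:kashiwara}. Let $j : U \hook S$ denote the quasi-compact open complement of $i$. By that theorem, $i_*^{\MotSpc}$ is already fully faithful, with essential image consisting of the $\sF \in \MotSpc(S)$ such that $j^*_{\MotSpc}(\sF)$ is contractible. Thus it will suffice to show that every motivic space over $U$ is contractible; then $i_*^{\MotSpc}$ will automatically be essentially surjective, hence an equivalence, and since it has $i^*_{\MotSpc}$ as left adjoint, both functors will be mutually inverse equivalences.

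To establish the contractibility, I would show that $U$ is in fact the empty spectral algebraic space $\initial$. The hypothesis that $(S_0)_{\cl,\red} \to S_{\cl,\red}$ is an isomorphism implies that the closed subset $(S_0)_\cl \subseteq S_\cl$ has underlying topological space equal to all of $|S_\cl|$, since passing to the reduction does not alter the underlying space. Hence the open complement $U_\cl \subseteq S_\cl$ has empty underlying space, so $U_\cl$ is the empty classical algebraic space. Choosing a finite Nisnevich covering of $U$ by affine spectral schemes $\Spec(R_\alpha)$, we deduce that $\Spec(\pi_0(R_\alpha)) = \Spec(R_\alpha)_\cl$ is empty, so $\pi_0(R_\alpha) = 0$; by connectivity of $R_\alpha$, this forces $R_\alpha = 0$, and therefore $U = \initial$.

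It remains to observe that $\MotSpc(\initial)$ is the terminal \inftyCat: every smooth spectral algebraic space over $\initial$ is itself $\initial$, and the Nisnevich sheaf condition (applied to the empty covering of the empty scheme) forces $\sF(\initial) = \pt$ for every sheaf. Consequently $j^*_{\MotSpc}(\sF)$ is contractible for all $\sF \in \MotSpc(S)$, and \thmref{thm:kashiwara} completes the argument. No real obstacle arises here; the corollary is essentially a translation of the nilpotence hypothesis into the emptiness of the open complement, combined with the Kashiwara-style reformulation of the localization theorem.
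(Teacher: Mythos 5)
Your argument is correct and coincides with the paper's: the paper simply notes ``Since the complement of $i$ is empty, this follows from \thmref{thm:kashiwara}.'' You have merely spelled out the (routine but worth recording) verification that the reduction hypothesis forces $U=\initial$ and that $\MotSpc(\initial)$ is trivial.
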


\begin{proof}
Since the complement of $i$ is empty, this follows from \thmref{thm:kashiwara}.
\end{proof}

\begin{cor} \label{cor:nilpotent invariance S_cl}
Let $S$ be a spectral algebraic space, and let $i : S_\cl \hook S$ denote the inclusion of its underlying classical algebraic space (viewed as a discrete spectral algebraic space).
Then the functors $i^*_{\MotSpc}$ and $i_*^{\MotSpc}$ define mutually inverse equivalences $\MotSpc(S) \simeq \MotSpc(S_\cl)$.
\end{cor}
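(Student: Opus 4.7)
The plan is to deduce this immediately from \corref{cor:nilpotent invariance}, applied to the closed immersion $i : S_\cl \hook S$. We just have to verify the two hypotheses of that corollary.

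First, I would note that $i : S_\cl \hook S$ is a closed immersion of spectral algebraic spaces. Indeed, $S_\cl$ is constructed by taking $\pi_0$ of the structure sheaf of $S$ (see the Notation conventions in the Introduction, which assert $\Spec(R)_\cl = \Spec(\pi_0(R))$ for $R$ a \cEring). Affine-locally on $S$, writing $S = \Spec(R)$, the inclusion is the morphism $\Spec(\pi_0(R)) \hook \Spec(R)$, which is a closed immersion because the truncation map $R \to \pi_0(R)$ is a surjection on $\pi_0$ with nilpotent (in fact, zero on $\pi_0$) kernel.

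Second, I need to check that $i$ induces an isomorphism $(S_\cl)_{\cl,\red} \isoto S_{\cl,\red}$. But $S_\cl$ is, by construction, a classical algebraic space, so $(S_\cl)_\cl = S_\cl$ and hence $(S_\cl)_{\cl,\red} = (S_\cl)_\red = S_{\cl,\red}$ tautologically.

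With both hypotheses verified, \corref{cor:nilpotent invariance} applies and shows that $i^*_\MotSpc$ and $i_*^\MotSpc$ are mutually inverse equivalences between $\MotSpc(S)$ and $\MotSpc(S_\cl)$. There is no real obstacle here: the content is entirely in \corref{cor:nilpotent invariance}, which in turn rests on \thmref{thm:kashiwara} (so ultimately on the localization theorem \thmref{thm:localization}). The present statement is just the special case where the closed subspace is the underlying classical algebraic space itself.
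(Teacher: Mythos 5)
Your proof is correct and follows exactly the (implicit) route the paper takes: the paper gives no separate proof of this corollary, treating it as an immediate special case of \corref{cor:nilpotent invariance} applied to $i : S_\cl \hook S$, which is precisely what you spell out. The only minor quibble is a slight imprecision in your first paragraph: the truncation map $R \to \pi_0(R)$ is an isomorphism (not merely a surjection) on $\pi_0$, and that is what makes $i$ a closed immersion with empty open complement; the "nilpotent kernel" phrasing is extraneous, though you do catch yourself and note the kernel on $\pi_0$ is zero.
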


\begin{warn}
\corref{cor:nilpotent invariance S_cl} does not assert that $\MotSpc(S)$ is equivalent to the classical motivic homotopy category over $S_\cl$, denoted $\MotSpc_\cl(S)$ in \propref{prop:char 0}.
\end{warn}

\begin{cor}\label{cor:nilpotent invariance affine}
Let $R$ be a \cEring and consider the affine spectral scheme $S = \Spec(R)$.
Denote by $i : S_\cl \to S$ the inclusion of the underlying classical scheme.
Then the functor $i_*^{\MotSpc}$ induces an equivalence from the \inftyCat of $\A^1$-homotopy invariant Nisnevich sheaves on $\Sm^\aff_{/S_\cl}$ to the \inftyCat of $\A^1$-homotopy invariant Nisnevich sheaves of spaces on $\Sm^\aff_{/S}$.
\end{cor}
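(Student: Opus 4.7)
The plan is to combine \corref{cor:nilpotent invariance S_cl} with \corref{cor:MotSpc on Sm^aff}(ii). Since both $S = \Spec(R)$ and $S_\cl = \Spec(\pi_0(R))$ are affine spectral schemes, part (ii) of \corref{cor:MotSpc on Sm^aff} identifies each of $\MotSpc(S)$ and $\MotSpc(S_\cl)$ with the \inftyCat of $\A^1$-invariant Nisnevich sheaves on the corresponding affine site, via restriction of presheaves along $\Sm^\aff \hook \Sm$. Meanwhile, \corref{cor:nilpotent invariance S_cl} provides an equivalence $i_*^\MotSpc : \MotSpc(S_\cl) \isoto \MotSpc(S)$. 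Composing these three equivalences yields the desired equivalence between the two affine-site sheaf \inftyCats.

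The only remaining point is to verify that the composite so produced is really induced by $i_*^\MotSpc$, rather than some other functor. By construction, $i_*^\MotSpc$ is restriction of presheaves along the base change functor $u : \Sm_{/S} \to \Sm_{/S_\cl}$, $X \mapsto X \fibprod_S S_\cl$. Since affine and smooth morphisms are both stable under base change, $u$ restricts to a functor $u^\aff : \Sm^\aff_{/S} \to \Sm^\aff_{/S_\cl}$, and the evident square with the inclusions $\Sm^\aff \hook \Sm$ commutes on the nose. Passing to presheaf restrictions and then to the full subcategories of $\A^1$-invariant Nisnevich sheaves produces a commutative square of functors in which the two vertical ones are the equivalences supplied by \corref{cor:MotSpc on Sm^aff}(ii) and the bottom horizontal one is $i_*^\MotSpc$. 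The top horizontal functor --- restriction along $u^\aff$, which is the natural candidate for $i_*$ on the affine sites --- is therefore an equivalence, and it agrees with the composite constructed in the first paragraph.

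There is no obstacle of substance here: the content lies entirely in the two corollaries being invoked (in turn, in the localization theorem \thmref{thm:localization} and the generation result \propref{prop:Spc_Nis generated by affines}), and the present statement is a matter of book-keeping, made possible by the fact that both $S$ and $S_\cl$ are \emph{affine} so that \corref{cor:MotSpc on Sm^aff}(ii) applies on both sides.
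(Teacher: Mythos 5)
Your proof is correct and follows the paper's own (one-line) argument: combine \corref{cor:nilpotent invariance S_cl} with \corref{cor:MotSpc on Sm^aff}(ii). The extra paragraph checking that the equivalence is really the one induced by restriction along the base-change functor $u^\aff : \Sm^\aff_{/S} \to \Sm^\aff_{/S_\cl}$ (i.e.\ that the relevant square of restriction functors commutes) is precisely the book-keeping the paper leaves implicit, and it is carried out correctly.
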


\begin{proof}
Combine \corref{cor:nilpotent invariance S_cl} with \corref{cor:MotSpc on Sm^aff}.
\end{proof}

\ssec{Closed base change formula}
\label{ssec:closed/bc}

\begin{constr}
Consider a cartesian square
  \begin{equation*}
    \begin{tikzcd}
      Y \arrow[hookrightarrow]{r}{k}\arrow{d}{g}
        & X \arrow{d}{f} \\
      Z \arrow[hookrightarrow]{r}{i}
        & S,
    \end{tikzcd}
  \end{equation*}
of spectral algebraic spaces, where $i$ and $k$ are closed immersions with quasi-compact open complements.
Then the square
  \begin{equation*}
    \begin{tikzcd}
      \MotSpc(Z)\ar{r}{i_*}\ar{d}{g^*}
        & \MotSpc(S)\ar{d}{f^*}
      \\
      \MotSpc(Y)\ar{r}{k_*}
        & \MotSpc(X)
    \end{tikzcd}
  \end{equation*}
commutes up to a natural transformation
  \begin{equation*}
    f^* i_* \xrightarrow{\mrm{unit}}
      k_* k^* f^* i_* \simeq
      k_* g^* i^* i_* \xrightarrow{\mrm{counit}}
      k_* g^*.
  \end{equation*}
\end{constr}

\begin{prop} \label{prop:closed base change}
Suppose given a cartesian square of spectral algebraic spaces as above, with $i$ and $k$ closed immersions with quasi-compact open complements.
Then the canonical natural transformation
  \begin{equation*}
    f^*_{\MotSpc} i_*^{\MotSpc} \to k_*^{\MotSpc}g^*_{\MotSpc}
  \end{equation*}
is invertible.
\end{prop}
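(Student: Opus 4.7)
The plan is to verify that $f^*_{\MotSpc} i_*^{\MotSpc}(\sF)$ already lies in the essential image of $k_*^{\MotSpc}$ for every $\sF \in \MotSpc(Z)$, and then to identify the natural transformation with the unit/counit composition via Kashiwara's theorem (\thmref{thm:kashiwara}).

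Let $j : U \hook S$ denote the quasi-compact open complement of $i$, and let $j' : V \hook X$ denote the quasi-compact open complement of $k$. Since the square is cartesian, $V = X \fibprod_S U$ and we obtain a cartesian square with smooth vertical map $j'$ and base change $f' : V \to U$ of $f$. First I would factor the canonical transformation as
  \begin{equation*}
    f^* i_* \xrightarrow{\mrm{unit}} k_* k^* f^* i_* \simeq k_* g^* i^* i_* \xrightarrow{\mrm{counit}} k_* g^*,
  \end{equation*}
where the middle identification is simply the base-change isomorphism $k^* f^* \simeq g^* i^*$ coming from $f \circ k \simeq i \circ g$.

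The counit $i^* i_* \to \id$ is invertible by \thmref{thm:kashiwara}, which states that $i_*$ is fully faithful. It therefore remains to show that the unit $f^* i_*(\sF) \to k_* k^* f^* i_*(\sF)$ is invertible, for which—again by \thmref{thm:kashiwara}—it suffices to check that $j'^* f^* i_*(\sF)$ is contractible. Applying the smooth base change formula (\propref{prop:smooth base change}) to the cartesian square with smooth leg $j$, we get $j'^* f^* \simeq f'^* j^*$. Combined with \corref{cor:j^*i_* trivial}, which identifies $j^* i_* \simeq \pt_U$, and with the fact that $f'^*$ commutes with finite products (\remref{rem:MotSpc categorical}) and therefore sends the terminal object $\pt_U$ to $\pt_V$, this yields $j'^* f^* i_*(\sF) \simeq \pt_V$, as required.

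The only non-formal inputs are the localization theorem (via \thmref{thm:kashiwara}) and the smooth base change formula, both already available; no further obstacle is anticipated. A brief check that the composite defined above agrees, via the triangle identities, with the canonical transformation constructed at the start of Subsect.~\ref{ssec:closed/bc} completes the argument.
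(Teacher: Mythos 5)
Your proof is correct, and it routes through the same fundamental inputs as the paper's — Kashiwara's theorem (\thmref{thm:kashiwara}), the identification $j^*i_* \simeq \pt_U$ from \corref{cor:j^*i_* trivial}, and smooth base change (\propref{prop:smooth base change}) — but organizes them a little differently. The paper first observes, as you do, that full faithfulness of $i_*$ lets one replace $\sF$ by $i^*(i_*\sF)$, but it then expands \emph{both} $f^*i_*i^*$ and $k_*g^*i^* = k_*k^*f^*$ as the pushouts supplied by the localization theorem (\thmref{thm:localization}) and matches the two squares term-by-term using smooth base change. You instead verify the \emph{essential-image} criterion from \thmref{thm:kashiwara}: showing $(j')^*f^*i_*(\sF) \simeq \pt_V$ proves the unit $f^*i_* \to k_*k^*f^*i_*$ is invertible, so there is only one pushout square to think about rather than two. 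This makes your argument slightly more streamlined; the tradeoff is that the paper's version makes the role of the localization square more visible. Two small remarks: first, the identification $(j')^*f^* \simeq (f')^*j^*$ is just functoriality of inverse image along the cartesian square (it does not require the smooth base change proposition, though invoking it is harmless); second, your final sentence about checking that the composite agrees with the one in the construction is not needed — the construction at the start of \ssecref{ssec:closed/bc} \emph{defines} the canonical transformation as exactly the unit-iso-counit composite you wrote down.
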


\begin{proof}
Since $i_*$ is fully faithful (\thmref{thm:kashiwara}) it suffices to show that the natural transformation
  \begin{equation*}
    f^* i_*i^* \to k_*g^*i^*
  \end{equation*}
is invertible.
This follows by a straightforward application of the localization theorem (\thmref{thm:localization}) for the closed immersions $i$ and $k$, respectively, using the smooth base change formula (\propref{prop:smooth base change}) for the open complements.
\end{proof}

Recall that in the pointed setting, the functor $i_*^{\MotSpc_\bullet} : \MotSpc(Z)_\bullet \to \MotSpc(S)_\bullet$ admits a right adjoint $i^!_{\MotSpc_\bullet}$ (\corref{cor:i_* admits right adjoint i^!}).

\begin{cor} \label{cor:closed base change pointed}
Suppose given a cartesian square of spectral algebraic spaces as above, with $i$ and $k$ closed immersions with quasi-compact open complements.
Then the natural transformations
  \begin{align*}
    k_*^{\MotSpc_\bullet}g^*_{\MotSpc_\bullet} \to f^*_{\MotSpc_\bullet} i_*^{\MotSpc_\bullet}
    \\
    i^!_{\MotSpc_\bullet} f_*^{\MotSpc_\bullet} \to g_*^{\MotSpc_\bullet} k^!_{\MotSpc_\bullet}
  \end{align*}
are invertible.
\end{cor}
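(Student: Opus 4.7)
The plan is to derive both assertions from the unpointed base change isomorphism established in \propref{prop:closed base change}, together with the general principle that isomorphisms between left adjoints dualize to isomorphisms between right adjoints.

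For the first assertion, I would begin by establishing the compatibility between the pointed and unpointed functors through the forgetful functor $U \colon \MotSpc(-)_\bullet \to \MotSpc(-)$, which is right adjoint to $(-)_+$ and is conservative. By \constrref{constr:pointed MotSpc}, the pointed direct image functor $i_*^{\MotSpc_\bullet}$ is defined so that it commutes with $U$ under passage to underlying motivic spaces, and the same holds for $k_*^{\MotSpc_\bullet}$. For the pullback functors, one checks that $U \circ g^*_{\MotSpc_\bullet}$ and $g^*_{\MotSpc} \circ U$ agree on generators of the form $\sF_+$ (both returning $g^*_{\MotSpc}(\sF) \sqcup \pt$), and that both preserve sifted colimits; since $\MotSpc(Z)_\bullet$ is generated under sifted colimits by such objects, the two functors coincide. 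The same argument applies to $f^*_{\MotSpc_\bullet}$. Consequently, applying $U$ to the natural transformation $k_*^{\MotSpc_\bullet} g^*_{\MotSpc_\bullet} \to f^*_{\MotSpc_\bullet} i_*^{\MotSpc_\bullet}$ recovers the unpointed base change transformation of \propref{prop:closed base change}, which is invertible. Conservativity of $U$ concludes the argument.

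For the second assertion, I would pass to right adjoints. The composite $f^*_{\MotSpc_\bullet} i_*^{\MotSpc_\bullet}$ has right adjoint $i^!_{\MotSpc_\bullet} f_*^{\MotSpc_\bullet}$, using \corref{cor:i_* admits right adjoint i^!} together with the standard adjunction $(f^*_{\MotSpc_\bullet}, f_*^{\MotSpc_\bullet})$; likewise $k_*^{\MotSpc_\bullet} g^*_{\MotSpc_\bullet}$ has right adjoint $g_*^{\MotSpc_\bullet} k^!_{\MotSpc_\bullet}$. The natural isomorphism of left adjoints established in the first part therefore dualizes to a canonical natural isomorphism between these right adjoints, which is precisely the claimed transformation.

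The main step requiring care is verifying that the pointed base change transformation is genuinely the one induced from the unpointed transformation, rather than an \emph{a priori} different map. This reduces to a routine unit–counit check, noting that both transformations are built from the same adjunction data $(f^*, f_*)$, $(k^*, k_*)$ and the units/counits of $(i^*, i_*)$, so that the commutativity of the requisite pentagon of natural transformations is inherited from the unpointed construction. Once this compatibility is in place, the result follows cleanly from \propref{prop:closed base change} and the standard formalism of adjunctions.
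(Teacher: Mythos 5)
Your proof is correct and takes essentially the same route as the paper, which simply observes that the first isomorphism follows from the unpointed version and the second by passage to right adjoints. You fill in the routine details: the compatibility of the pointed and unpointed pullback and direct image functors with the forgetful functor $U$, conservativity of $U$, and the identification of $i^!_{\MotSpc_\bullet} f_*^{\MotSpc_\bullet}$ and $g_*^{\MotSpc_\bullet} k^!_{\MotSpc_\bullet}$ as the right adjoints of $f^*_{\MotSpc_\bullet} i_*^{\MotSpc_\bullet}$ and $k_*^{\MotSpc_\bullet} g^*_{\MotSpc_\bullet}$ respectively.

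One small remark worth making explicit: the unpointed transformation of \propref{prop:closed base change} is constructed as $f^*i_* \to k_*g^*$, while the pointed statement lists the arrow in the opposite direction $k_*^{\MotSpc_\bullet}g^*_{\MotSpc_\bullet} \to f^*_{\MotSpc_\bullet} i_*^{\MotSpc_\bullet}$ (this one is the canonical exchange morphism built from the $(i_*, i^!)$ and $(k_*, k^!)$ adjunctions, which exist only in the pointed setting). The two transformations are formally inverse to one another once either is shown invertible, so your argument is fine, but since you assert that applying $U$ to the pointed transformation literally ``recovers'' the unpointed one, it would be cleaner to note that what $U$ recovers is the inverse of the unpointed transformation, or equivalently to verify invertibility of the exchange map $f^*_{\MotSpc_\bullet} i_*^{\MotSpc_\bullet} \to k_*^{\MotSpc_\bullet} g^*_{\MotSpc_\bullet}$ (the one built from $(i^*,i_*)$ and $(k^*,k_*)$, as in the unpointed construction) and then pass to its inverse.
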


\begin{proof}
The first follows immediately from the unpointed version (\propref{prop:closed base change}), and the second follows from the first by passing to right adjoints.
\end{proof}

\ssec{Closed projection formula}
\label{ssec:closed/proj}

\begin{constr}
Let $i : Z \hook S$ be a closed immersion with quasi-compact open complement.
Given pointed motivic spaces $\sF' \in \MotSpc(Z)_\bullet$ and $\sF \in \MotSpc(S)_\bullet$, we get a morphism
  \begin{equation*}
    i^*_{\MotSpc_\bullet}(i_*^{\MotSpc_\bullet}(\sF') \wedge \sF)
      \simeq i^*_{\MotSpc_\bullet}i_*^{\MotSpc_\bullet}(\sF') \wedge i^*_{\MotSpc_\bullet}(\sF)
      \xrightarrow{\mrm{counit}} \sF' \wedge i^*_{\MotSpc_\bullet}(\sF)
  \end{equation*}
which corresponds by adjunction to a canonical morphism
  \begin{equation*}
    i_*^{\MotSpc_\bullet}(\sF') \wedge \sF \to i_*^{\MotSpc_\bullet}(\sF' \wedge i^*_{\MotSpc_\bullet}(\sF)).
  \end{equation*}
\end{constr}

\begin{prop} \label{prop:closed projection formula}
Let $i : Z \hook S$ be a closed immersion with quasi-compact open complement.
Given pointed motivic spaces $\sF' \in \MotSpc(Z)_\bullet$, $\sF \in \MotSpc(S)_\bullet$, and $\sG \in \MotSpc(S)_\bullet$, there are canonical bifunctorial isomorphisms
  \begin{align*}
    i_*^{\MotSpc_\bullet}(\sF') \wedge \sF \to i_*^{\MotSpc_\bullet}(\sF' \wedge i^*_{\MotSpc_\bullet}(\sF)),\\
    i^!_{\MotSpc_\bullet} \uHom_S(\sG, \sF) \to \uHom_Z(i^*_{\MotSpc_\bullet}(\sG), i^!_{\MotSpc_\bullet}(\sF)).
  \end{align*}
\end{prop}

\begin{proof}
The second statement follows from the first by passing to right adjoints.
For the first, it suffices by fully faithfulness of $i_*$ (\thmref{thm:kashiwara}) to show that the morphism
  \begin{equation*}
    i_*(i^*\sF) \wedge \sG \to i_*(i^*\sF \wedge i^*\sG)
  \end{equation*}
is invertible for all pointed motivic spaces $\sF, \sG \in \MotSpc(S)_\bullet$.
This follows from the localization theorem (\corref{cor:localization pointed}), using the smooth projection formula (\propref{prop:smooth projection formula}) for the open complement $j : U \hook S$.
\end{proof}

\ssec{Smooth-closed base change formula}
\label{ssec:closed/smbc}

\begin{constr}
Consider a cartesian square of spectral algebraic spaces
  \begin{equation*}
    \begin{tikzcd}
      Y \arrow[hookrightarrow]{r}{k}\arrow{d}{q}
        & X \arrow{d}{p} \\
      Z \arrow[hookrightarrow]{r}{i}
        & S,
    \end{tikzcd}
  \end{equation*}
where $i$ and $k$ are closed immersions with quasi-compact open complements, and $p$ and $q$ are smooth.
Then by \propref{prop:smooth base change} it follows that the square
  \begin{equation*}
    \begin{tikzcd}
      \MotSpc(Y)_\bullet \ar{r}{k_*}\ar{d}{q_\sharp}
        & \MotSpc(X)_\bullet \ar{d}{p_\sharp}
      \\
      \MotSpc(Z)_\bullet \ar{r}{i_*}
        & \MotSpc(S)_\bullet.
    \end{tikzcd}
  \end{equation*}
commutes up to a natural transformation
  \begin{equation*}
    p_\sharp k_*
      \xrightarrow{\mrm{unit}} i_* i^* p_\sharp k_*
      \simeq i_* q_\sharp k^* k_*
      \xrightarrow{\mrm{counit}} i_* q_\sharp
  \end{equation*}
\end{constr}

\begin{prop} \label{prop:(smooth,closed)-base change}
Suppose given a cartesian square of spectral algebraic spaces as above, where $i$ and $k$ are closed immersions with quasi-compact open complements, and $p$ and $q$ are smooth.
Then the canonical natural transformations
  \begin{align*}
    p_\sharp^{\MotSpc_\bullet} k_*^{\MotSpc_\bullet} \to i_*^{\MotSpc_\bullet} q_\sharp^{\MotSpc_\bullet}\\
    q^*_{\MotSpc_\bullet} i^!_{\MotSpc_\bullet} \to k^!_{\MotSpc_\bullet} p^*_{\MotSpc_\bullet}
  \end{align*}
are invertible.
\end{prop}

\begin{proof}
The second transformation is obtained from the first by passing to right adjoints.
Since the direct image functor $k_*$ is fully faithful (\thmref{thm:kashiwara}), it suffices to show that the transformation $p_\sharp k_* k^* \to i_* q_\sharp k^*$, obtained by pre-composition with $k^*$, is invertible.
This follows directly from the localization theorem (\corref{cor:localization pointed}) and the smooth base change formula (\propref{prop:smooth base change}).
\end{proof}


\section{Proof of the localization theorem}
\label{sec:loc}

This section is dedicated to the proof of our main result, \thmref{thm:localization}.
For the duration of the section, we fix a closed immersion of spectral algebraic spaces $i : Z \hook S$ with quasi-compact open complement $j : U \hook S$.
Given $X \in \Sm_{/S}$, we will use the notation $X_U := X \fibprod_S U \in \Sm_{/U}$ and $X_Z := X \fibprod_S Z \in \Sm_{/Z}$.

\ssec{The space of \texorpdfstring{$Z$}{Z}-trivialized maps}
\label{ssec:loc/thespace}

In this paragraph we formulate \propref{prop:h(X,t) is contractible}, which aside from \thmref{thm:i_* commutes with contractible colimits} is the main input that goes into the proof of the localization theorem; \ssecref{ssec:loc/contract} will be dedicated to its proof.

\begin{constr}\label{constr:h^Z_S(X)}
Let $X \in \Sm_{/S}$ and denote by $\h^Z_S(X) \in \Spc(S)$ the fibred space
  \begin{equation*}
    \h^Z_S(X) := \hspc[S]{X} \fibcoprod_{\hspc[S]{X_U}} \hspc[S]{U}.
  \end{equation*}
Note that for $Y \in \Sm_{/S}$, the space $\Gamma(Y, \hspc[S]{U})$ is either empty or contractible depending on whether $Y_Z \in \Sm_{/Z}$ is empty.
It follows that the space of sections $\Gamma(Y, \h_S^Z(X))$ is contractible when $Y_Z$ is empty, and otherwise is given by the mapping space $\Map_S(Y,X)$.
\end{constr}

\begin{rem}
There is a canonical identification $i^*_{\Spc}(\h_S^Z(X)) \simeq \hspc[Z]{X_Z}$ (since $i^*_{\Spc}$ commutes with colimits).
\end{rem}

\begin{constr}
Let $X\in\Sm_{/S}$ and $t : Z \hook X$ an $S$-morphism, i.e. a partially defined section of $X \to S$.
Then $t$ corresponds by adjunction to a canonical morphism $\tau : \pt_S = \hspc[S]{S} \to i_*^{\Spc}(\hspc[S]{X_Z})$, and we define the fibred space $\hspc[S]{X, t} \in \Spc(S)$ as the fibre of the unit map
  \begin{equation} \label{eq:counit on hps^Z(X)}
    \h_S^Z(X) \to i_*^{\Spc}i^*_{\Spc}(\h_S^Z(X)) \simeq i_*^{\Spc}(\hspc[Z]{X_Z})
  \end{equation}
at the point $\tau$.
Thus for any $Y \in \Sm_{/S}$, the space $\Gamma(Y, \hspc[S]{X,t})$ is contractible when $Y_Z$ is empty, and otherwise is given by the fibre of the restriction map
  \[ \Map_S(Y, X) \to \Map_Z(Y_Z, X_Z) \]
at the point defined by the composite $Y_Z \to Z \stackrel{t}{\hook} X_Z$.
\end{constr}

\begin{rem}
Informally speaking, points of the space $\Gamma(Y, \hspc[S]{X, t})$ (when $Y_Z \ne \initial$) are pairs $(f, \alpha)$, with $f : Y \to X$ an $S$-morphism and $\alpha$ a commutative triangle
  \begin{equation*}
    \begin{tikzcd}
      Y_Z \arrow{r}{f_Z}\arrow{d}
        & X_Z.
      \\
      Z \arrow[hookrightarrow]{ru}{t}
    \end{tikzcd}
  \end{equation*}
We refer to $\alpha$ informally as a \emph{$Z$-trivialization} of $f$.
\end{rem}

\begin{rem}\label{rem:h(X,t) smooth base change}
For a smooth morphism $p : T \to S$, there is a canonical isomorphism
  \begin{equation*}
    p^*_{\Spc}(\hspc[S]{X, t}) \simeq \hspc[T]{X_T, t_T},
  \end{equation*}
where $t_T : Z_T \hook X_T$ is the base change of $t$ along $p$.
This follows from the fact that the functor $p^*_{\Spc}$ commutes with limits and colimits.
\end{rem}

We will deduce \thmref{thm:localization} from the following result:

\begin{prop} \label{prop:h(X,t) is contractible}
Let $Z \hook S$ be a closed immersion of spectral algebraic spaces with quasi-compact open complement.
Let $p : X \to S$ be a smooth morphism of spectral algebraic spaces, and $t : Z \hook X$ an $S$-morphism.
If $X$ is affine and admits an étale morphism to $\A^n_S$, for some $n\ge 0$, then the space $\hspc[S]{X,t}$ is motivically contractible.
That is, the morphism $\hspc[S]{X,t} \to \pt_S$ is a motivic equivalence.
\end{prop}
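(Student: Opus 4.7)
\medskip
\noindent\textbf{Proof plan.} I would follow the four-step strategy sketched in the introduction to \secref{sec:loc}.

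\emph{Step 1: Globalize $t$ Nisnevich-locally on $X$.} The construction $X \mapsto \hspc[S]{X,t}$ is assembled from the representables $\hspc[S]{X}, \hspc[S]{X_U}, \hspc[S]{U}$ by a pushout and a fibre formed pointwise, so it carries Nisnevich-covering squares on $X$ to Nisnevich-local colimit squares. The statement is therefore local on $X$ in the Nisnevich topology, and we may apply \propref{prop:Nisnevich lifting}: treating the identity of $Z$ as an étale map over $X_Z$ via $t$, we obtain Nisnevich-locally on $X$ an étale $X$-scheme $\widetilde{X}$ together with a global section $s : S \hook \widetilde{X}$ restricting to $t$ on $Z$. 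Replacing $X$ by $\widetilde{X}$, we may assume that $t$ is the restriction of a genuine global section $s : S \hook X$.

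\emph{Step 2: Reduce to the zero section of $\A^n_S$.} Choose an étale $S$-morphism $\varphi : X \to \A^n_S$ as in the hypothesis. The functor of points $A \mapsto \Omega^\infty(A)^n$ of $\A^n_S$ is additive, so translating $\varphi$ by the $S$-point $-\varphi\circ s$ we may arrange $\varphi \circ s = 0_S$, the zero section. Then $\varphi$ is an étale morphism over $S$ that is an isomorphism from $s(S)$ onto $0_S(S)$. The goal in this step is to build from this ``pointed étale approximation'' a canonical motivic equivalence
\begin{equation*}
  \hspc[S]{X, t} \to \hspc[S]{\A^n_S, t_0}, \qquad t_0 := 0_S|_Z,
\end{equation*}
reducing the proposition to the contractibility of $\hspc[S]{\A^n_S, t_0}$.

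\emph{Step 3: Contract by scaling.} The scalar multiplication $\mu : \A^1_S \times \A^n_S \to \A^n_S$, $(\lambda, y) \mapsto \lambda y$, preserves the zero locus and therefore preserves the $Z$-trivialization condition $f|_{Y_Z} = 0$. Packaging this gives a morphism of fibred spaces $\hspc[S]{\A^1_S} \times \hspc[S]{\A^n_S, t_0} \to \hspc[S]{\A^n_S, t_0}$ which is a strict $\A^1$-homotopy between the identity of $\hspc[S]{\A^n_S, t_0}$ and the constant endomorphism through the tautological basepoint ($f=0$ with its trivial $Z$-trivialization). Hence $\hspc[S]{\A^n_S, t_0}$ is strictly $\A^1$-homotopy equivalent to $\pt_S$, and in particular motivically contractible.

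\emph{Main obstacle.} The technical heart is the étale invariance asserted in Step~2. To construct the comparison map in the nontrivial direction, one must lift a $Y$-point of $\A^n_S$ together with its $Z$-trivialization along $0_S$ to a $Y$-point of $X$ compatible with $s$. The base change $Y \fibprod_{\A^n_S} X \to Y$ is étale and, because $\varphi$ is an isomorphism from $s(S)$ onto $0_S(S)$, the $Z$-trivialization canonically cuts out a clopen piece admitting a section over $Y_Z$. A further appeal to \propref{prop:Nisnevich lifting} produces the desired lift Nisnevich-locally on $Y$. Verifying that these lifts assemble functorially into a Nisnevich-local equivalence---and that one only needs \v{C}ech-Nisnevich data, not hyperdescent---is the step that will require the most care.
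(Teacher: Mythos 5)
Your Steps 2 and 3 are essentially the paper's Corollary \ref{cor:linear approximation}, Lemma \ref{lem:etale invariance of h(X,t)}, and Lemma \ref{lem:h(E,s) of vector bundle}, and the ``main obstacle'' you single out is exactly the étale excision statement the paper proves as Lemma \ref{lem:etale invariance of h(X,t)}; that identification is correct and shows you have the right mental model of the argument. However, Step 1 contains the decisive gap, and the small gap in Step 2 is worth flagging too.

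On Step 1: the assertion that the construction $X \mapsto \hspc[S]{X,t}$ ``carries Nisnevich-covering squares on $X$ to Nisnevich-local colimit squares'' and hence that the statement is ``local on $X$'' does not hold. The colimit description of $\LNis\h_S(X)$ via the \v Cech nerve of a Nisnevich cover $\{X_\alpha \to X\}$ is fine, but $\hspc[S]{X,t}$ is a fibre of $\h^Z_S(X) \to i_*^{\Spc}\h_Z(X_Z)$ over the point $\tau$ determined by $t$, and this point need not lift to $i_*^{\Spc}\h_Z((X_\alpha)_Z)$ --- precisely because $t$ need not lift to the covering pieces $X_\alpha$. So you cannot decompose $\hspc[S]{X,t}$ term-by-term along a \v Cech nerve of a cover of $X$. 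The paper's proof avoids this entirely: Lemma \ref{lem:lifting sections} produces a \emph{single} $Y \to X$ with $Y \to S$ étale and $Y_Z \isoto Z$, which makes $\{j : U \hook S,\ q : Y \to S\}$ a Nisnevich cover of the \emph{base} $S$, and then Nisnevich separation (Proposition \ref{prop:Nisnevich separation of MotSpc}) reduces to checking contractibility of $j^*\hspc[S]{X,t}$ and $q^*\hspc[S]{X,t}$. The first is vacuously contractible because $U_Z = \initial$, and after the second base change the section lifts \emph{tautologically} (since $q$ factors through $X$, the map $Y \to Y \fibprod_S X$ is a global section). This is the missing structural move; without it, your reduction to a globally defined $s$ is not justified.

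On Step 2: translating so that $\varphi \circ s = 0_S$ gives $s(S) \subseteq \varphi^{-1}(0_S)$, but not that the inclusion is an equality, which is what Lemma \ref{lem:etale invariance of h(X,t)} needs (a cartesian square over $Z$). Corollary \ref{cor:linear approximation} handles this by constructing $f$ directly from a basis of $\sN_{S/X}$, using Nakayama's lemma and a cotangent complex computation to force the cartesian property --- the translation trick alone does not deliver it.

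Once you replace Step 1 by the $\{j, q\}$-separation argument and use Corollary \ref{cor:linear approximation} rather than translation for Step 2, the remainder of your plan (scaling homotopy on $\hspc[S]{\A^n_S, z}$, étale excision) coincides with the paper's proof.
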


The proof will be completed in \ssecref{ssec:loc/contract}.

\ssec{Motivic contractibility of \texorpdfstring{$\hspc[S]{X,t}$}{h(X,t)}}
\label{ssec:loc/contract}

In this paragraph we prove \propref{prop:h(X,t) is contractible}.
We will need the local structure theory of quasi-smooth closed immersions.

\begin{prop}\label{prop:quasi-smooth}
Let $k : Y \hook X$ be a closed immersion of spectral algebraic spaces.
Let $\sL_{Y/X}$ denote the relative cotangent complex.
Then the following conditions are equivalent:

\noindent{\em(i)}
The morphism $k$ is locally of finite presentation, and the quasi-coherent $\sO_Y$-module $\sL_{Y/X}[-1]$ is locally free of finite rank.

\noindent{\em(ii)}
The morphism $k$ is almost of finite presentation, and the quasi-coherent $\sO_Y$-module $\sL_{Y/X}[-1]$ is locally free of finite rank.

\noindent{\em(iii)}
The morphism of underlying classical schemes $k_\cl : Y_\cl \hook X_\cl$ is of finite presentation, and the quasi-coherent $\sO_Y$-module $\sL_{Y/X}[-1]$ is locally free of finite rank.

\noindent{\em(iv)}
Nisnevich-locally on $X$, there exists a morphism $f : X \to \A^n_{\bS}$ fitting in a cartesian square
  \begin{equation*}
    \begin{tikzcd}
      Y \ar{r}{k}\ar{d}
        & X \ar{d}{f}
      \\
      \Spec(\bS) \ar{r}{\{0\}}
        & \A^n_{\bS},
    \end{tikzcd}
  \end{equation*}
where the lower horizontal arrow is the inclusion of the origin in spectral affine space.
\end{prop}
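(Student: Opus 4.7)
The plan is to establish the cycle (i) $\Rightarrow$ (ii) $\Rightarrow$ (iii) $\Rightarrow$ (iv) $\Rightarrow$ (i). The first implication is immediate from the definitions of finite presentation and almost finite presentation. For (ii) $\Rightarrow$ (iii), one uses that a morphism almost of finite presentation between \cErings is in particular of finite presentation on $\pi_0$, so $k_\cl$ is a classical closed immersion of finite presentation. The implication (iv) $\Rightarrow$ (i) follows by base change: both the property of being locally of finite presentation and the property of having $\sL[-1]$ locally free of rank $n$ are preserved under pullback, and the model inclusion $\Spec(\bS) \hook \A^n_\bS$ satisfies them (the relative cotangent complex is computed from the cotangent fibre sequence for $\bS \to \bS\{T_1,\ldots,T_n\} \to \bS$ and is free of rank $n$ after shift).

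The main content is (iii) $\Rightarrow$ (iv). I would work Zariski-locally on $X$, assuming $X = \Spec(A)$ and $Y = \Spec(B)$, with $\sL_{B/A}[-1]$ free over $B$ of rank $n$. Since $k_\cl$ is a classical closed immersion of finite presentation, its kernel $I_\cl := \ker(\pi_0(A) \to \pi_0(B))$ is a finitely generated ideal; the standard identification $I_\cl/I_\cl^2 \simeq \pi_0(\sL_{B/A}[-1])$ together with Nakayama allow us, after further Zariski shrinking around a chosen point, to find $\bar f_1,\ldots,\bar f_n \in I_\cl$ whose images trivialize the free $B$-module $\sL_{B/A}[-1]$. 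Lift these to elements $f_i \in \pi_0(A)$ and further to an $\Einfty$-algebra homomorphism $\bS\{T_1,\ldots,T_n\} \to A$ sending $T_i \mapsto f_i$; this defines the desired $f : X \to \A^n_\bS$.

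Set $Y' := X \fibprod_{\A^n_\bS} \Spec(\bS)$. By construction $Y' \hook X$ is a closed immersion with $\sL_{Y'/X}[-1]$ free of rank $n$. Since $\pi_0$ of each $f_i$ maps to zero in $\pi_0(B)$, the composite $\bS\{T_1,\ldots,T_n\} \to A \to B$ agrees on $\pi_0$ with the augmentation; using nilpotent invariance (\thmref{thm:intro/etale nil}) applied to the small étale topos of $B$, this nullhomotopy can be upgraded—possibly after a Nisnevich shrinking of $X$—to an $\Einfty$-algebra nullhomotopy, yielding a canonical morphism $k' : Y \to Y'$ over $X$. It is an isomorphism on $\pi_0$, since both $Y_\cl$ and $Y'_\cl$ are the classical vanishing locus of the $\bar f_i$ in $X_\cl$.

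Finally, the cotangent fibre sequence $(k')^* \sL_{Y'/X} \to \sL_{Y/X} \to \sL_{Y/Y'}$ has outer terms both locally free of rank $n$, and by the way the basis was chosen the first map is an isomorphism; hence $\sL_{Y/Y'} \simeq 0$. Combined with the $\pi_0$-isomorphism, this forces $k'$ to be étale and a closed immersion inducing an equivalence on $\pi_0$, hence an isomorphism by another application of \thmref{thm:intro/etale nil}. The main obstacle will be the coherent choice of nullhomotopies producing the morphism $Y \to Y'$ in the derived sense: while the condition on $\pi_0$ is readily arranged, promoting the fact that $f_i \mapsto 0$ in $\pi_0(B)$ to a canonical higher structure is precisely what forces working Nisnevich-locally rather than Zariski-locally in the statement of (iv).
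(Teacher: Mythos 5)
The overall shape of your argument mirrors the paper's: reduce to the affine case, choose elements trivializing the conormal sheaf, build $f\colon X\to\A^n_\bS$, and argue the resulting square is cartesian via a cotangent calculation. But there is a genuine gap in the crucial step (iii) $\Rightarrow$ (iv), and you essentially flag it yourself at the end without resolving it.

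The identification $I_\cl/I_\cl^2 \simeq \pi_0(\sL_{B/A}[-1])$ is \emph{not} correct in spectral algebraic geometry. What is true (\cite[Thm.~7.4.3.1]{HA-20170918}, which is what the paper invokes) is that $\pi_0(k^*\sI) \simeq \pi_1(\sL_{Y/X})$, where $\sI$ is the \emph{spectral} fibre of $\sO_X \to k_*\sO_Y$, not the kernel of $\pi_0(\sO_X)\to\pi_0(\sO_Y)$. From the fibre sequence one has an exact sequence $\pi_1(B) \to \pi_0(\sI) \to I_\cl \to 0$, and after base change to $\pi_0(B)$ one only gets a surjection $\pi_0(k^*\sI)\twoheadrightarrow I_\cl/I_\cl^2$, not an isomorphism (the failure is controlled by the image of $\pi_1(B)$). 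So your $\bar f_i\in I_\cl$ need not trivialize the conormal sheaf, and the later claim that ``the first map is an isomorphism by the way the basis was chosen'' does not hold with this setup.

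This is also exactly where your invocation of Theorem A goes wrong. You need a commutative square, i.e.\ a nullhomotopy of the composite $\bS\{T_1,\ldots,T_n\}\to A\to B$; Theorem A concerns the equivalence of small étale topoi and has nothing to say about lifting nullhomotopies. (Existence of \emph{some} nullhomotopy is automatic, since $\pi_0\Map_{\CAlg}(\bS\{T_1,\ldots,T_n\},B)\simeq\pi_0(B)^n$, but the \emph{choice} affects the conormal computation and so cannot be waved away.) The paper sidesteps this entirely by lifting the basis elements $\bar f_i\in\pi_0\Gamma(Y,k^*\sI)$ along the surjection $\pi_0\Gamma(X,\sI)\to\pi_0\Gamma(Y,k^*\sI)$ to elements $f_i\in\pi_0\Gamma(X,\sI)$: a point of $\Gamma(X,\sI)=\Fib(\Gamma(X,\sO_X)\to\Gamma(Y,\sO_Y))$ is precisely a global function on $X$ \emph{together with} a nullhomotopy of its restriction to $Y$, so the commutative square and the compatibility with the chosen basis of $\sL_{Y/X}[-1]$ come packaged together. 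This is the missing idea. (Your remark that the nullhomotopy issue explains the Nisnevich locality in (iv) is also off; Nisnevich locality is needed only to reduce to the affine case for general algebraic spaces, and the rest of the argument is Zariski-local.)
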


\begin{defn}
If $k : Y \hook X$ satisfies one of the equivalent conditions of \propref{prop:quasi-smooth}, then we say that it is \emph{quasi-smooth}, and write $\sN_{Y/X} := \sL_{Y/X}[-1]$ for its \emph{conormal sheaf}.
\end{defn}

\begin{proof}[Proof of \propref{prop:quasi-smooth}]
The claim being local, we may assume that $X$ is affine and that $\sL_{Y/X}[-1]$ is free of rank $n\ge 0$.
The equivalence between the first three conditions follows from \cite[Thm.~7.4.3.18]{HA-20170918}.
The implication (iv) $\implies$ (i) is obvious.
Suppose that (i) holds and choose a basis $df_1,\ldots,df_n$ for $\Gamma(Y, \sL_{Y/X}[-1])$.
If $\sI$ denotes the fibre of the morphism $\sO_{X} \to k_*\sO_{Y}$, then there is a canonical isomorphism $\pi_0(k^*\sI) \simeq \pi_1(\sL_{Y/X})$ of $\pi_0(\sO_{Y})$-modules \cite[Thm.~7.4.3.1]{HA-20170918}.
The global sections $df_i$ correspond to global sections $\bar{f}_i$ of $k^*(\sI)$, which we can lift along the surjection $\sI \to k_*k^*(\sI)$ to global sections $f_i$ of $\sI$.
By Nakayama's lemma we may assume that these $f_i$ generate $\pi_0(\sI)$ as a $\pi_0(\sO_{X})$-module.
Therefore, they determine a morphism $f : X \to \A^n_{\bS}$ and a commutative square
  \begin{equation*}
    \begin{tikzcd}
      Y \ar{r}{k}\ar{d}
        & X \ar{d}{f}
      \\
      \Spec(\bS)\ar{r}{\{0\}}
        & \A^n_\bS
    \end{tikzcd}
  \end{equation*}
which is cartesian on underlying classical schemes.
To show that it is cartesian itself, it will suffice by \cite[Cor.~7.4.3.4]{HA-20170918} to show that the relative cotangent complex $\sL_{Y/V}$ of the morphism $Y \to V := X \fibprod_{\A^n_{\bS}} \Spec(\bS)$  vanishes.
But this follows immediately from an inspection of the exact triangle
  \begin{equation*}
    \sL_{V/X}|Y \to \sL_{Y/X} \to \sL_{Y/V}
  \end{equation*}
where both first terms are isomorphic to a shifted free module $\sO_Y^{\oplus n}[1]$.
\end{proof}

We will actually use a slight variant of \propref{prop:quasi-smooth} which concerns the case of a smooth morphism admitting a globally defined section (such a section is automatically quasi-smooth).

\begin{lem} \label{lem:linear approximation}
Let $p : X \to S$ be a smooth morphism of spectral algebraic spaces, and suppose it admits a section $s : S \hook X$.
Then Nisnevich-locally on $X$, there exists an $S$-morphism $f : X \to \A^n_S$ fitting in a cartesian square
  \[ \begin{tikzcd}
    S \arrow[hookrightarrow]{r}{s}\arrow[equals]{d}
      & X \arrow{d}{f} \\
    S \arrow[hookrightarrow]{r}{\{0\}}
      & \A^n_S.
  \end{tikzcd} \]
Moreover, the morphism $f$ is \'etale on some Zariski neighbourhood of $s$.
\end{lem}

\begin{proof}
Note that there is a canonical isomorphism $\sL_{S/X}[-1] \simeq \sL_{X/S}|S$.
By the assumption, $\sL_{X/S}$ is free of rank $n$, so the same holds for the $\sO_S$-module $\sN_{S/X} = \sL_{S/X}[-1]$.
Then the first claim is proven exactly as the implication (i) $\implies$ (iv) of \propref{prop:quasi-smooth}.
For the second, note that the canonical isomorphism $s^*\sL_{X/\A^n_S} \simeq \sL_{S/S} \simeq 0$ shows that $f$ is étale on the image of $s$ (since $f$ is of finite presentation).
In other words, $s$ factors through the étale locus of $f$.
\end{proof}

Next we apply the structure theory to lift partially defined sections (in a weak sense).

\begin{lem} \label{lem:lifting sections}
Let $i : Z \hook S$ be a closed immersion of spectral algebraic spaces, $p : X \to S$ a smooth morphism of spectral algebraic spaces, and $t : Z \hook X$ an $S$-morphism.
Then Nisnevich-locally on $X$, there exists a spectral algebraic space $Y$ over $X$ such that the composite $Y \to S$ is étale, and induces an isomorphism $Y_Z \to Z$:
  \begin{equation*}
    \begin{tikzcd}
      Z \ar{r}\ar{d}{t}
        & Y \ar{d}
      \\
      X_Z \ar{r}{i'}\ar{d}
        & X \ar{d}{p}
      \\
      Z \ar{r}{i}
        & S
    \end{tikzcd}
  \end{equation*}
\end{lem}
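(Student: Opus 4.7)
The plan is to construct $Y$ as a Zariski-open subspace of a suitable fibre product, after first reducing to the affine case. Since the statement is local on $S$ for the Nisnevich topology, I would assume $S = \Spec(R)$ is affine, in which case $Z = \Spec(R')$ for some $\Einfty$-ring map $R \to R'$ that is surjective on $\pi_0$, and $X = \Spec(A)$ with $A$ étale over $R\{T_1, \ldots, T_n\}$. The partial section $t : Z \hook X$ corresponds to a map $A \to R'$; composing with the structural map $R\{T_1, \ldots, T_n\} \to A$ then classifies $n$ elements $\bar z_1, \ldots, \bar z_n \in \Omega^\infty R'$.

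The key step is to exploit the surjectivity of $\pi_0 R \to \pi_0 R'$ to lift the classes $[\bar z_i] \in \pi_0 R'$ to $\pi_0 R$ and pick representatives $z_i \in \Omega^\infty R$; these determine a section $s : S \to \A^n_S$ of the projection. I would then set $Y_0 := S \fibprod_{\A^n_S, s} X$, which is étale over $S$ by base change of the étale morphism $q : X \to \A^n_S$. By construction, the two maps $s|_Z$ and $q \circ t : Z \rightrightarrows \A^n_S$ represent the same class in $\pi_0 \Map_S(Z, \A^n_S) \simeq (\pi_0 R')^n$, so they lie in the same connected component of the mapping space; choosing any path between them and combining with the data of $i : Z \to S$ and $t : Z \to X$ produces, by the universal property of the fibre product, a morphism $\sigma : Z \to Y_0$ whose two projections recover $i$ and $t$.

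Finally, I would observe that the base change of $\sigma$ along $i$ is a section of the étale morphism $(Y_0)_Z = Y_0 \fibprod_S Z \to Z$, and that every section of an étale morphism is a clopen immersion (since the diagonal of an étale morphism is itself étale, hence a clopen monomorphism). Writing the resulting decomposition $(Y_0)_Z \simeq Z \sqcup Z''$ and viewing $Z''$ as a closed subspace of $Y_0$ via $(Y_0)_Z \hook Y_0$, I set $Y := Y_0 \setminus Z''$. This $Y$ is étale over $S$, satisfies $Y_Z \simeq Z$, and the composite $Y \hook Y_0 \to X$ restricts along $Z \hook Y$ to $t$. The hard part is the coordinate-lifting step: it is exactly the surjectivity $\pi_0 R \twoheadrightarrow \pi_0 R'$ (which is why the reduction to affine $S$ is essential) that produces the section $s$, and once this is in hand the remainder of the argument is essentially formal.
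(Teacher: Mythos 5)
Your construction is genuinely different from the paper's, and most of it is sound, but the opening reduction is a gap.

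You claim the statement is Nisnevich‑local on $S$ and use this to assume $S=\Spec(R)$ is affine, which is exactly what is needed for your coordinate‑lifting step ($\pi_0 R\twoheadrightarrow\pi_0 R'$ produces the section $s:S\to\A^n_S$). But the conclusion of the lemma — the existence of a single étale $Y\to S$, over $X$, with $Y_Z\to Z$ an \emph{isomorphism} — is not a Nisnevich‑local property of $S$. If you take an affine Nisnevich cover $\{S_\alpha\to S\}$ and produce $Y_\alpha$ over $X_\alpha=X\times_S S_\alpha$ with $(Y_\alpha)_{Z_\alpha}\simeq Z_\alpha$, the only candidate over $X$ is $Y=\coprod_\alpha Y_\alpha$, whose $Z$‑fibre is $\coprod_\alpha Z_\alpha\to Z$, a nontrivial Nisnevich cover rather than an isomorphism; the $Y_\alpha$ need not glue over the overlaps. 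So the reduction does not recover the lemma as stated. The paper sidesteps this: it lifts coordinates not along $\Gamma(S,\sO_S)\to\Gamma(Z,\sO_Z)$ (which needs $S$ affine) but along $\Gamma(X,\sO_X)\to\Gamma(X_Z,\sO_{X_Z})$, exploiting only that $X$ is affine. The coordinates it lifts are not the given ones from $q:X\to\A^n_S$ but a new linear approximation $f:X_Z\to\A^n_Z$ of the section $t$, coming from the quasi‑smooth structure theory (\corref{cor:linear approximation}); $Y_0:=X\fibprod_{g,\A^n_S,\{0\}}S$ is then shown cartesian over $Z$ via a cotangent‑complex calculation, and $Y$ is the étale locus of $Y_0\to S$. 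Your version, by contrast, builds $Y_0:=S\fibprod_{s,\A^n_S,q}X$, which is automatically étale over $S$ (so you skip the étale‑locus step) but whose $Z$‑fibre is a priori larger than $Z$ — which you repair by splitting off the clopen image of the section $\sigma$. That trade‑off is genuinely clean and avoids \propref{prop:quasi-smooth} entirely, but it is precisely what forces the affine‑$S$ hypothesis, so as written it proves a weaker statement.

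Two small additional remarks. First, your justification that a section of an étale map is clopen should invoke separatedness: the diagonal is an open immersion because the map is unramified, and it is a closed immersion because (in your affine setting) the map is separated — not because the diagonal is étale. Second, the homotopy $\alpha:s\circ i\simeq q\circ t$ you choose to define $\sigma$ is non‑canonical; this is harmless here (any choice produces a section whose image is a clopen component, and you only need existence), but worth flagging.
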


\begin{proof}
Applying \lemref{lem:linear approximation} to the smooth morphism $X_Z \to Z$ with section $Z \hook X_Z$ induced by $t$, we obtain a cartesian square
  \begin{equation*}
    \begin{tikzcd}
      Z \ar{r}{t}\ar[equals]{d}
        & X_Z \ar{d}{f}
      \\
      Z\ar{r}{\{0\}}
        & \A^n_Z.
    \end{tikzcd}
  \end{equation*}
The morphism $f$ is determined by a set of global sections $f_1,\ldots,f_n$ of $\sO_{X_Z}$; lifting them along the surjection $\sO_X \to i'_*(\sO_{X_Z})$, we obtain global sections $g_i$ of $\sO_X$ which determine a morphism $g : X \to \A^n_S$.
We define $Y_0 := X \fibprod_{\A^n_S} S$ so that we have a commutative square
  \begin{equation*}
    \begin{tikzcd}
      Z \ar{r}{i''}\ar{d}
        & Y_0 \ar{d}
      \\
      X_Z \ar{r}{i'}
        & X.
    \end{tikzcd}
  \end{equation*}
where the morphism $Z \to (Y_0)_Z$ is an isomorphism on underlying classical schemes.
The exact triangle
  \begin{equation*}
    \sL_{(Y_0)_Z/Y_0}|Z \to \sL_{Z/Y_0} \to \sL_{Z/(Y_0)_Z}
  \end{equation*}
shows that $\sL_{Z/(Y_0)_Z}$ vanishes, so it follows from \cite[Cor.~7.4.3.4]{HA-20170918} that the square is cartesian.
Then we have a canonical isomorphism $(i'')^*\sL_{Y_0/S} \simeq \sL_{Z/Z} \simeq 0$, which shows that $Y_0 \to S$ is étale on the image of $Z$.
Thus we may take $Y \hook Y_0$ to be the étale locus of $Y_0 \to S$ to conclude.
\end{proof}

We are now ready to return to our study of the space of $Z$-trivialized maps.
We first deal with the case of vector bundles:

\begin{lem} \label{lem:h(E,s) of vector bundle}
Let $E$ be a vector bundle over $S$ with zero section $s : S \hook E$.
Then the space $\hspc[S]{E, s_Z}$ is motivically contractible, where $s_Z : Z \hook E_Z$ denotes the base change of $s$ along $i : Z \hook S$.
\end{lem}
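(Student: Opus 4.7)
The plan is to exhibit $\hspc[S]{E, s_Z}$ as strictly $\A^1$-homotopy equivalent to $\pt_S$ by using the scalar multiplication $\mu : \A^1_S \fibprod_S E \to E$ on the vector bundle, which contracts $E$ onto its zero section. Once such a strict $\A^1$-homotopy equivalence is in hand, the conclusion that $\hspc[S]{E, s_Z} \to \pt_S$ is a motivic equivalence is immediate, since strict $\A^1$-homotopy equivalences are in particular $\A^1$-local equivalences.

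First, I would produce a canonical $S$-point $\sigma : \pt_S \to \hspc[S]{E, s_Z}$. The zero section $s : S \to E$ defines a morphism $\pt_S = \hspc[S]{S} \to \hspc[S]{E}$ whose restriction to $Z$ is tautologically $s_Z$; the identity homotopy of $s_Z$ provides its $Z$-trivialization. The composite $\pt_S \xrightarrow{\sigma} \hspc[S]{E, s_Z} \to \pt_S$ is then visibly the identity.

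Second, I would build an elementary $\A^1$-homotopy
\[
H : \hspc[S]{\A^1_S} \times \hspc[S]{E, s_Z} \to \hspc[S]{E, s_Z}
\]
induced by $\mu$. For $Y \in \Sm_{/S}$ with $Y_Z \ne \initial$, a point of $\Gamma(Y, \hspc[S]{E, s_Z})$ is a pair $(f, \alpha)$ with $f : Y \to E$ an $S$-morphism and $\alpha$ a homotopy $f|_{Y_Z} \simeq s_Z \circ (Y_Z \to Z)$ in $\Map_Z(Y_Z, E_Z)$; given $t : Y \to \A^1_S$, I would send $(t, (f, \alpha))$ to the scaled section $(t \cdot f, t \cdot \alpha)$, where $t \cdot f := \mu \circ (t, f)$ and $t \cdot \alpha$ is obtained by applying $\mu$ to $\alpha$. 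The key identity that makes the $Z$-trivialization land where it should is $\mu \circ (\id_{\A^1_S}, s) = s \circ p_1$, i.e.\ scaling the zero section gives the zero section; this ensures that $t \cdot \alpha$ still terminates at $s_Z \circ (Y_Z \to Z)$. At $t = 1$ the map $H$ restricts to the identity, and at $t = 0$ it factors as $\hspc[S]{E, s_Z} \to \pt_S \xrightarrow{\sigma} \hspc[S]{E, s_Z}$.

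Together, these two constructions show that $\sigma$ and the structural map to $\pt_S$ are mutually inverse strict $\A^1$-homotopy equivalences, so $\hspc[S]{E, s_Z}$ is motivically contractible. The main step requiring care is the well-definedness of $H$: one must check that $\mu$ descends to a self-map of the pushout $\h^Z_S(E) = \hspc[S]{E} \fibcoprod_{\hspc[S]{E_U}} \hspc[S]{U}$ that preserves the fibre at $\tau$. Over $U$ this is automatic since $\Gamma(Y, \hspc[S]{E, s_Z})$ is contractible whenever $Y_Z = \initial$, so the verification reduces to the Yoneda piece $\hspc[S]{E}$, where it is the identity $\mu \circ (t, s) = s \circ p_1$ mentioned above.
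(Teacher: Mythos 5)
Your proposal matches the paper's proof: both use the zero section to produce a section $\sigma$ of $\varphi : \hspc[S]{E, s_Z} \to \pt_S$, and the scalar-multiplication $\A^1$-action on $E$ to produce an elementary $\A^1$-homotopy between $\sigma\circ\varphi$ and the identity, descending to the fibre because the zero section is fixed under scaling. The paper packages the well-definedness check in a commutative square between the $\A^1$-action on $\h_S^Z(E)$ and on $i_*^\Spc\hspc[Z]{E_Z}$, which is the same verification you describe explicitly.
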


\begin{proof}
The map $\varphi : \hspc[S]{E, s_Z} \to \pt_S$ admits a section $\sigma : \pt_S \to \hspc[S]{E, s_Z}$, induced by the composite $\hspc[S]{S} \xrightarrow{s} \hspc[S]{E} \to \h_S^Z(E)$.
It will suffice to exhibit an $\A^1$-homotopy
  \[ \gamma : \hspc[S]{\A^1_S} \times \hspc[S]{E, s_Z} \to \hspc[S]{E, s_Z}\]
between $\sigma\circ\varphi$ and the identity.
The canonical action of $\A^1_S$ on $E$ gives rise to the vertical maps in the commutative square
  \begin{equation*}
    \begin{tikzcd}
      \hspc[S]{\A^1_S} \times \h_S^Z(E) \ar{r}\ar{d}
        & \hspc[S]{\A^1_S} \times i_*^\Spc(\hspc[Z]{E_Z})\ar{d}
      \\
      \h_S^Z(E)\ar{r}
        & i_*^\Spc(\hspc[Z]{E_Z}).
    \end{tikzcd}
  \end{equation*}
The homotopy $\gamma$ is the map induced on fibres (given informally by the assignment $(\lambda : Y \to \A^1_S, f : Y \to E) \mapsto (\lambda\cdot f : Y \to E)$ on $Y$-sections).
\end{proof}

Our final ingredient is a certain invariance property for the construction $\hspc[S]{X,t}$:

\begin{lem} \label{lem:etale invariance of h(X,t)}
Let $X$, resp. $X'$, be a smooth spectral algebraic space over $S$, and let $t : Z \hook X$, resp. $t' : Z \hook X'$, be an $S$-morphism.
Suppose $f : X' \to X$ is an étale $S$-morphism such that the square
  \begin{equation*}
    \begin{tikzcd}
      Z \arrow[hookrightarrow]{r}{t'}\arrow[equals]{d}
        & X'_Z \arrow{d}{f_Z}
      \\
      Z \arrow[hookrightarrow]{r}{t}
        & X_Z
    \end{tikzcd}
  \end{equation*}
is cartesian.
Then the induced morphism of fibred spaces
  \begin{equation*}
    \varphi : \hspc[S]{X', t'} \to \hspc[S]{X, t}.
  \end{equation*}
is a Nisnevich-local equivalence.
\end{lem}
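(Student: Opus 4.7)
My plan is to reduce the claim to a Nisnevich-local lifting problem which, thanks to the hypothesis on $f_Z$, can be trivialized on a canonical Nisnevich cover of the base.

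To begin, by the universality of colimits and the left-exactness of $\LNis$, it suffices to show that for each $Y \in \Sm_{/S}$ and each section $y \in \Gamma(Y, \hspc[S]{X,t})$, the pulled-back map
\[
\psi_y \colon \hspc[S]{X', t'} \fibprod_{\hspc[S]{X,t}} \hspc[S]{Y} \to \hspc[S]{Y}
\]
is a Nisnevich-local equivalence. If $Y_Z = \initial$, then (by \remref{rem:h(X,t) smooth base change} and the fact that $Z \fibprod_S U = \initial$) both sides of $\varphi$ collapse to $\pt_{Y}$ over such $Y$, and there is nothing to prove. So assume $Y_Z \ne \initial$; then $y$ corresponds to a pair $(g : Y \to X, \alpha)$, with $\alpha$ witnessing a factorization $g|_{Y_Z} \simeq t \circ q$ for some $q : Y_Z \to Z$.

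The key geometric step is to construct a Nisnevich cover of $Y$ over which the lifting problem dissolves. Set $Y' := Y \fibprod_{X,g} X'$, which is étale over $Y$ as a base change of $f$. Using the factorization $g|_{Y_Z} = t \circ q$ together with the given cartesian square $t' = t \fibprod_{X_Z} X'_Z$, one computes
\[
Y' \fibprod_Y Y_Z \;\simeq\; Y_Z \fibprod_{X_Z} X'_Z \;\simeq\; Y_Z \fibprod_Z (Z \fibprod_{X_Z} X'_Z) \;\simeq\; Y_Z \fibprod_Z Z \;=\; Y_Z,
\]
with the second projection corresponding to $t' \circ q : Y_Z \to X'_Z$. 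Therefore $\{Y_U \hook Y,\; Y' \to Y\}$ is a Nisnevich square covering $Y$.

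Next I would verify that $\psi_y$ becomes an equivalence on each member of this cover. Over $Y_U$: the base-change formula reduces both source and target to $\pt_{Y_U}$, so $\psi_y|_{Y_U}$ is trivially an equivalence. Over $Y'$: the tautological second projection $Y' \to X'$ is a canonical lift of $g \circ (Y' \to Y)$, and under the identification $Y' \fibprod_Y Y_Z \simeq Y_Z$ its restriction is precisely the prescribed $t' \circ q$; hence $\psi_y|_{Y'}$ has a distinguished section, and applying the same argument recursively to the base-changed étale map $Y' \fibprod_Y Y' \to Y'$ (which is again étale and an isomorphism over its $Z$-fibre, now with the diagonal as canonical section) shows that $\psi_y|_{Y'}$ is itself a Nisnevich-local equivalence. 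Finally, since the class of Nisnevich-local equivalences is closed under base change and can be checked on a Nisnevich cover (via \thmref{thm:excision=descent}), it follows that $\psi_y$ is a Nisnevich-local equivalence, completing the proof.

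The main obstacle is justifying the recursive contractibility of $\psi_y|_{Y'}$: the iterated pullback $Y' \fibprod_Y Y' \to Y'$ is not an isomorphism, only étale with a distinguished diagonal section and an isomorphism over its $Z$-fibre, so one needs the same Nisnevich-descent bootstrap one level deeper. In practice this is handled cleanly by observing that the entire argument applies relatively over any base, so the inductive step is formally the same statement (reduced to the trivial case after the diagonal section is introduced), allowing the Nisnevich-local equivalence to propagate.
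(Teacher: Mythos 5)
Your strategy---reduce via universality of colimits to the pulled-back map $\psi_y$ over each representable $Y$, then cover $Y$ by the Nisnevich square $\{Y_U,\, Y' := Y \fibprod_X X'\}$---builds the same Nisnevich cover as the paper, but closes the argument differently. The paper shows $\LNis(\varphi)$ is invertible by verifying three conditions directly: $\varphi$ is $0$-truncated (since $f$, being \'etale, is $0$-truncated and Yoneda preserves limits, so this descends through the exact functor $\LNis$); $\LNis(\varphi)$ is an effective epimorphism (your Nisnevich square supplies the local lifts, with the tautological projection $Y' \to X'$ over $Y'$ and the vacuous lift over $Y_U$); and the diagonal of $\LNis(\varphi)$ is an effective epimorphism (using that the equalizer of two local lifts along the \'etale $f$ is an open subspace of $Y$ containing $Y_Z$).

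The recursive step is where the genuine gap lies, and you flag it yourself. After identifying the source of $\psi_y|_{Y'}$ with a space of $Z$-trivialized maps attached to $Y' \fibprod_Y Y' \to Y'$ (with the diagonal as partial section), you invoke the lemma again; as written this has no reason to terminate, and ``reduced to the trivial case after the diagonal section is introduced'' is not a proof---a map of presheaves admitting a section need not be an equivalence. What actually closes the loop is that $\Delta : Y' \hook Y' \fibprod_Y Y'$ is an \emph{open immersion}, not merely \'etale. In the open-immersion case the induced map on $\hspc[S]{-,-}$-spaces is a \emph{monomorphism} of presheaves (a lift along a monomorphism, together with its trivialization, is unique when it exists), so your Nisnevich-local surjectivity then finishes directly without further bootstrap. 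The argument can therefore be repaired as a two-step descent (general \'etale reduces to open immersion; open immersion handled by monomorphism plus local lifting), but this termination mechanism must be made explicit. Note that the same algebraic input---\'etale morphisms have open diagonal---appears in the paper's Step~3 through the equalizer of two lifts; the paper simply packages it into the $0$-truncated-plus-$0$-connected criterion, which sidesteps the recursion entirely.
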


\begin{proof}
The claim is that the induced morphism of Nisnevich sheaves $\LNis(\varphi)$ is invertible, so it will suffice to show that it is $0$-truncated (i.e. its diagonal is a monomorphism) and $0$-connected (i.e. it is an effective epimorphism and so is its diagonal).

\noindent\emph{Step 1}.
To show that $\LNis(\varphi)$ is 0-truncated, it suffices to show that $\varphi$ is already $0$-truncated (since $\LNis$ is exact).
For this, it suffices to show that for every $Y\in\Sm_{/S}$, the induced morphism of spaces of $Y$-sections
  \[ \Gamma(Y, \varphi) : \Gamma(Y, \h^Z_S(X',t')) \to \Gamma(Y, \h^Z_S(X,t)) \]
is 0-truncated.
We may assume $Y_Z$ is not empty; then this is the morphism induced on fibres in the diagram
  \begin{equation*}
    \begin{tikzcd}
      \Gamma(Y, \h^Z_S(X',t')) \arrow{r}\arrow[dashed]{d}
        & \Map_S(Y, X') \arrow{r}\arrow{d}
        & \Map_Z(Y_Z, X'_Z) \arrow{d} \\
      \Gamma(Y, \h^Z_S(X,t)) \arrow{r}
        & \Map_S(Y, X) \arrow{r}
        & \Map_Z(Y_Z, X_Z)
    \end{tikzcd}
  \end{equation*}
Note that the two right-hand vertical morphisms are 0-truncated: $p$ is itself 0-truncated since it is \'etale, and since the Yoneda embedding commutes with limits, the induced morphism $\h_S(X') \to \h_S(X)$ is also 0-truncated.
It follows that the left-hand vertical morphism is also 0-truncated for each $Y$.

\noindent\emph{Step 2}.
To show that $\LNis(\varphi)$ is an effective epimorphism, it suffices to show that for every $Y\in\Sm_{/S}$ (with $Y_Z$ not empty), any $Y$-section of $\h^Z_S(X,t)$ can be lifted Nisnevich-locally along $\varphi$.
Let $f$ be a $Y$-section of $\h^Z_S(X,t)$, i.e. a $Z$-trivialized morphism $f : Y \to X$.
Let $q : Y' \to Y$ denote the base change of $p : X' \to X$ along $f$:
  \begin{equation*}
    \begin{tikzcd}
      Y' \arrow{r}{q}\arrow{d}{g}
        & Y \arrow{d}{f} \\
      X' \arrow{r}{p}
        & X.
    \end{tikzcd}
  \end{equation*}
Then note that
  \begin{equation*}
    \begin{tikzcd}
      q^{-1}(Y_U) \arrow[hookrightarrow]{r}\arrow{d}
        & Y' \arrow{d}{q} \\
      Y_U \arrow[hookrightarrow]{r}
        & Y
    \end{tikzcd}
  \end{equation*}
is a Nisnevich square.
Indeed, the closed immersion $Y_Z \hook Y$ is complementary to $Y_U \hook Y$, and it is clear that $q^{-1}(Y_Z) \to Y_Z$ is invertible because in the diagram
  \begin{equation*}
    \begin{tikzcd}
      q^{-1}(Y_Z) \arrow{r}\arrow{d}
        & Y_Z \arrow{d} \\
      Z \arrow{r}{\id_Z}\arrow[hookrightarrow]{d}{t}
        & Z \arrow[hookrightarrow]{d}{t'} \\
      X'_Z \arrow{r}{p_Z}
        & X_Z
    \end{tikzcd}
  \end{equation*}
the lower square and the composite square are cartesian, and hence so is the upper square.
It now suffices to show that the restriction of $f$ to either component of this Nisnevich cover lifts to $\h^Z_S(X',t')$.
The restriction $f|Y'$ lifts to a section of $\h^Z_S(X',t')$ given by $g : Y' \to X'$.
The restriction $f|Y_U$ admits a lift tautologically: since $(Y_U) \fibprod_S Z = \initial$, the spaces $\h^Z_S(X,t)(Y_U)$ and $\h^Z_S(X',t')(Y_U)$ are both contractible.

\noindent\emph{Step 3}.
It remains to show that the diagonal $\Delta_{\LNis(\varphi)}$ of $\LNis(\varphi)$ is an effective epimorphism, or equivalently that $\LNis(\Delta_\varphi)$ is.
For every $Y\in\Sm_{/S}$, the diagonal induces a morphism of spaces
  \[ \Gamma(Y, \h^Z_S(X',t')) \to \Gamma(Y, \h^Z_S(X',t')) \fibprod_{\Gamma(Y, \h^Z_S(X,t))} \Gamma(Y, \h^Z_S(X',t')). \]
It suffices to show that for each $Y$ (with $Y_Z$ not empty), any point of the target lifts Nisnevich-locally to a point of the source.
Choose a point of the target, given by two $Z$-trivialized morphisms $f : Y \to X'$ and $g : Y \to X'$, and an identification $\alpha : p \circ f \simeq p \circ g$.
Let $Y_0 \hook Y$ denote the open immersion defined as the equalizer of the pair $(f,g)$; note that the closed immersion $Y_Z \hook Y$ factors through $Y_0$.
Thus the open immersions $Y_0 \hook Y$ and $Y_U \hook Y$ form a Zariski cover of $Y$.
It is clear that the point $(f,g,\alpha)$ lifts after restriction to $Y_0$ by definition, and after restriction to $Y_U$ since $Y_U \fibprod_S Z = \initial$, so the claim follows.
\end{proof}

We are now ready to conclude the proof of \propref{prop:h(X,t) is contractible}.

\begin{proof}[Proof of \propref{prop:h(X,t) is contractible}]
The question being local on $X$, we may assume by \lemref{lem:lifting sections} that there exists a Nisnevich square
  \begin{equation} \label{eq:post-reduction-nis-square}
    \begin{tikzcd}
      Y_U \arrow[hookrightarrow]{r}\arrow{d}
        & Y \arrow{d}{q}
      \\
      U \arrow[hookrightarrow]{r}{j}
        & S
    \end{tikzcd}
  \end{equation}
where $j : U \hook S$ is the complement of $i$, and $q$ factors through $p : X \to S$.
By the Nisnevich separation property (\propref{prop:Nisnevich separation of MotSpc}), it will suffice to show that $j^*\hspc[S]{X,t}$ and $q^* \hspc[S]{X,t}$ are motivically contractible.
By \remref{rem:h(X,t) smooth base change}, we have $j^*\hspc[S]{X,t} \simeq \hspc[U]{X_U,t_U}$.
But since $U$ is complementary to $Z$, $t_U$ is the inclusion of the empty scheme, so $\hspc[U]{X_U,t_U}$ is contractible by construction.
Next consider $q^* \hspc[S]{X,t} \simeq \hspc[Y]{Y \fibprod_S X, t'}$, where $t' : Y_Z \hook (Y \fibprod_S X)_Z$ is the base change of $t$.
By construction there exists a section $t'': Y \hook Y \fibprod_S X$ which lifts $t'$ (since $q$ factors through $X$).
Hence by \lemref{lem:linear approximation} and \lemref{lem:etale invariance of h(X,t)}, we have a motivic equivalence
  \[ \hspc[Y]{Y \fibprod_S X, t'} \simeq \hspc[Y]{\A^n_Y, z}, \]
where $z : Y_Z \hook \A^n_{Y_Z}$ denotes the zero section.
Now the claim follows from \lemref{lem:h(E,s) of vector bundle}.
\end{proof}

\ssec{The proof}
\label{ssec:loc/proof}

We conclude this section by proving the localization theorem (\thmref{thm:localization}).
Let $i : Z \hook S$ be a closed immersion of spectral algebraic spaces with quasi-compact open complement $j : U \hook S$.
We wish to show that for every motivic space $\sF \in \MotSpc(S)$, the canonical morphism
  \begin{equation} \label{eq:localization 1}
    \sF \fibcoprod_{j_\sharp^\MotSpc j^*_\MotSpc(\sF)} \hmot[S]{U} \to i_*^\MotSpc i^*_\MotSpc(\sF)
  \end{equation}
is invertible.
In what follows below, we will omit the decorations $\MotSpc$.

Using \propref{prop:MotSpc generated by affines} we may reduce to the case where $\sF$ is the motivic localization $\hmot[S]{X}$ of some affine $X\in\Sm_{/S}$ that admits an étale morphism $X \to \A^n_S$ for some $n\ge 0$, since all functors involved commute with sifted colimits (\thmref{thm:i_* commutes with contractible colimits}).
In this case the morphism \eqref{eq:localization 1} is canonically identified with the morphism
  \begin{equation*}
    \hmot[S]{X} \fibcoprod_{\hmot[S]{X_U}} \hmot[S]{U} \to i_*^\MotSpc \hmot[Z]{X_Z}
  \end{equation*}
where we write $X_U = X \fibprod_S U$ and $X_Z = X \fibprod_S Z$.
Note that the source of this morphism is the motivic localization of the space $\h_S^Z(X)$ (\constrref{constr:h^Z_S(X)}).
Hence it suffices to show that the morphism of fibred spaces
  \begin{equation*}
    \h_S^Z(X) \to i_*^{\Spc} \hspc[Z]{X_Z}
  \end{equation*}
is a motivic equivalence.
By universality of colimits, it suffices to show that for every $Y\in\Sm_{/S}$ and every morphism $\hspc[S]{Y} \to i_*^{\Spc} \hspc[Z]{X_Z}$, corresponding to an $S$-morphism $t : Z \to X$, the base change
  \begin{equation*}
    \h_S^Z(X) \fibprod_{i_*^{\Spc} \hspc[Z]{X_Z}} \hspc[S]{Y} \to \hspc[S]{Y}
  \end{equation*}
is invertible.
If $p : Y \to S$ denotes the structural morphism, we have $\hspc[S]{Y} \simeq p_\sharp^{\Spc} \hspc[Y]{Y}$ so that this morphism is identified, by the smooth projection formula (\propref{prop:smooth projection formula}), with a morphism
  \begin{equation*}
    p_\sharp^{\Spc}(p^*_{\Spc}\h_S^Z(X) \fibprod_{p^*_{\Spc}i_*^{\Spc} \hspc[Z]{X_Z}} \hspc[Y]{Y}) \to p_\sharp^{\Spc}\hspc[Y]{Y}.
  \end{equation*}
If $k$ (resp. $q$) denotes the base change of $i$ (resp. $p$) along $p$ (resp. $i$), then by \remref{rem:h(X,t) smooth base change} and the smooth base change formula $p^*i_* \simeq k_*q^*$ (\propref{prop:smooth base change}), we see that this morphism is the image by $p_\sharp$ of the morphism
  \begin{equation} \label{eq:localization 6}
    \h_Y^{Y_Z}(X \fibprod_S Y) \fibprod_{k_*^{\Spc} \h_{Y_Z}((X \fibprod_S Y)_Z)} \hspc[Y]{Y} \to \hspc[Y]{Y}.
  \end{equation}
Now the source is nothing else than the space $\hspc[Y]{X \fibprod_S Y, t_Y}$, where $t_Y : Z \fibprod_S Y \to X \fibprod_S Y$ is the base change of $t$ along $p$.
Hence we conclude by \propref{prop:h(X,t) is contractible}.

\bibliographystyle{alphamod}

\newpage
\bibliography{references}

\end{document}